\documentclass[a4paper,10pt,leqno]{amsart}
        \title{Conformal nets V: dualizability}       
       \author{Arthur Bartels}
      \address{Westf\"alische Wilhelms-Universit\"at M\"unster\\
               Mathematisches Institut\\
               Einsteinstr.~62,
               D-48149 M\"unster, Deutschland}
        \email{bartelsa@wwu.de}
      \urladdr{http://www.math.uni-muenster.de/u/bartelsa}
       \author{Christopher L. Douglas} 
      \address{Mathematical Institute\\ Radcliffe Observatory Quarter\\ Woodstock Road\\ Oxford\\ OX2 6GG\\ United Kingdom}
        \email{cdouglas@maths.ox.ac.uk}
      \urladdr{http://people.maths.ox.ac.uk/cdouglas}
       \author{Andr{\'e} Henriques}
      \address{Mathematical Institute\\ Radcliffe Observatory Quarter\\ Woodstock Road\\ Oxford\\ OX2 6GG\\ United Kingdom}
        \email{andre.henriques@maths.ox.ac.uk}
      \urladdr{http://www.andreghenriques.com}  

\usepackage{hyperref}
\usepackage{enumerate,amssymb}
\usepackage[arrow,curve,matrix,tips,2cell]{xy}
  \SelectTips{eu}{10} \UseTips
  \UseAllTwocells
\usepackage{tikz}                \usetikzlibrary{calc}               \usetikzlibrary{matrix}		\usetikzlibrary{patterns}
\usepackage{pdfcolmk}
\usepackage{calc}
\usepackage{multirow}
\usepackage{stmaryrd}


       \newcommand{\cala}{\mathcal{A}}
       \newcommand{\calb}{\mathcal{B}}
  \newcommand{\IC}{\mathbb{C}}     \newcommand{\calc}{\mathcal{C}}
  \newcommand{\ID}{\mathbb{D}}     \newcommand{\cald}{\mathcal{D}}

  \newcommand{\IN}{\mathbb{N}}     \newcommand{\caln}{\mathcal{N}}

  \newcommand{\bfB}{{\mathbf B}}

  \newcommand{\cK}{\mathcal K}
  \newcommand{\cA}{\mathcal A}
  \newcommand{\cB}{\mathcal B}


  \newcounter{commentcounter}

  \setlength{\marginparsep}{0.25cm}
  \setlength{\marginparwidth}{2.8cm}
  \definecolor{AHcolor}{rgb}{0.5,0.0,0.5}   
  \definecolor{CDcolor}{rgb}{0.7,0.0,0.3}   
  \definecolor{ABcolor}{rgb}{0.2,0.8,0.2}   
  \newcommand{\AB}[1]{\marginpar{\raggedright\tiny\color{ABcolor}{ #1}}}
  
  \newcommand{\CD}[1]{\marginpar{\raggedright\tiny\color{CDcolor}{ #1}}}


  \newcommand{\tikzmath}[2][]
     {\vcenter{\hbox{\begin{tikzpicture}[#1]#2
                     \end{tikzpicture}}}
     }


  \newcommand{\textscale}{.03}
  
  \newcommand{\squarescale}{.07}
  \newcommand{\displscale}{.05}

  \definecolor{spacecolor}{gray}{.7}
  \definecolor{antispacecolor}{gray}{.45}


  \theoremstyle{plain}
  \newtheorem{theorem}{Theorem}[section]
  
  \newtheorem{lemma}[theorem]{Lemma}
  \newtheorem{corollary}[theorem]{Corollary}
  \newtheorem{proposition}[theorem]{Proposition}

  \newtheorem*{theorem*}{Theorem}
  \newtheorem{introthm}{Theorem}

  \theoremstyle{definition}
  \newtheorem{definition}[theorem]{Definition}

  \theoremstyle{remark}
  
  \newtheorem{remark}[theorem]{Remark}
  \newtheorem{scholium}[theorem]{Scholium}

  \makeatletter\let\c@equation=\c@theorem\makeatother



  %
     {\end{list}}


  \DeclareMathOperator{\Diff}{Diff}

  \DeclareMathOperator{\id}{id}


  \newcommand{\INT}{{\mathsf{INT}}}
  
  \newcommand{\CN}{{\mathsf{CN}}}

\def\tworarrow{\hspace{.1cm}{\setlength{\unitlength}{.50mm}\linethickness{.09mm}
	\begin{picture}(8,8)(0,0)\qbezier(0,4)(4,7)(8,4)\qbezier(0,1)(4,-2)(8,1)\qbezier(3.5,4)(3.5,3)(3.5,1.5)
	\qbezier(4.5,4)(4.5,3)(4.5,1.5)\qbezier(4,0.8)(4.5,1.7)(5.5,2)\qbezier(4,0.8)(3.5,1.7)(2.5,2)
	\qbezier(8,1)(7.4,.2)(7.7,-.7)\qbezier(8,1)(7,1)(6.5,1.5)\qbezier(8,4)(7.4,4.8)(7.7,5.7)
	\qbezier(8,4)(7,4)(6.5,3.5)\end{picture}\hspace{.1cm}}}


  \newcommand{\alg}{{\mathit{alg}}}
  
  \newcommand{\op}{{\mathit{op}}}


  \newcommand{\x}{{\times}}

  \newcommand{\dd}{{\partial}}
  \newcommand{\e}{{\varepsilon}}

  \newcommand{\ra}{\rightarrow}
  
\newcommand{\nid}{\noindent}

  \def\CC{\mathbb C}

\def\cA{\mathcal A}

\def\rev{\mathrm{rev}}

\def\lemHKK{Lemma A.4}
\def\lemvacvacvacdefects{Lemma 1.15}
\def\lemDtildeHBfinite{Lemma 3.17}
\def\notationnoncanonvacuumdefects{Notation 1.14}
\def\lemNTbetweenmodulecategories{Lemma B.24}
\def\propKLMfinitenessdefects{Prop. 3.18}
\def\lemdualvacuum{Lemma 3.4}
\def\thmnetsarbitrarymanifolds{Thm 1.3}
\def\eqdefectsdisconnected{Eq 1.34}
\def\interchangeiso{Eq 6.25}
\def\lemnoncanonvacuumdefect{Lemma 1.13}
\def\thminterchange{Thm 6.2}
\def\lemNTbetweenmodulecategories{Lemma B.24}
\def\exampleonenotoneone{Example 3.5}
\def\remarkweakidentity{Remark 1.40}

\def\corcontragredientbimodule{Cor 6.12}
\def\thmnetoncircle{Thm 1.20}
\def\propnetoncircle{Prop 1.25}
\def\thmconformalblocks{Thm 2.18}

\def\KLM{Kawahigashi-Longo-Mueger(2001multi-interval)}

\newcommand{\Hilb}{\mathsf{Hilb}}
\newcommand{\vN}{\mathsf{vN}}
\newcommand{\Hom}{\mathrm{Hom}}

  \DeclareRobustCommand{\SkipTocEntry}[5]{}


\begin{document}

\begin{abstract}

We prove that finite-index conformal nets are fully dualizable objects in the $3$-category of conformal nets.  Therefore, assuming the cobordism hypothesis applies, there exists a local framed topological field theory whose value on the point is any finite-index conformal net.  Along the way, we prove a Peter--Weyl theorem for defects between conformal nets, namely that the annular sector of a finite defect is the sum of every sector tensor its dual.
	
\end{abstract}

\maketitle


\tableofcontents

\newcommand{\comment}[1]{}

\section*{Introduction}

A finite-dimensional Hilbert space $H$ is dualizable in the sense that there is a Hilbert space $H^\ast$ together with evaluation and coevaluation morphisms $\mathrm{ev}: H \otimes H^\ast \ra \CC$ and $\mathrm{coev}: \CC \ra H^\ast \otimes H$ such that the identity $\id_H$ can be recovered as the composite $(\mathrm{coev} \otimes \id_H) \circ (\id_H \otimes \,\mathrm{ev})$, and the identity $\id_{H^\ast}$ can be recovered as a similar composite; indeed, every dualizable Hilbert space is finite-dimensional.  

The 2-category $\vN$ of von Neumann algebras deloops the category $\Hilb$ of Hilbert spaces in the sense that $\Hom_\vN(1,1) \cong \Hilb$.  If a von Neumann algebra $A$ is a finite direct sum of type $I$ factors, then it is fully dualizable in the sense that there is a von Neumann algebra $A^\op$ together with evaluation bimodule ${}_{A \otimes A^\op} H_{\CC}$ and coevaluation bimodule ${}_\CC H_{A^\op \otimes A}$ such that the identity bimodule ${}_A L^2(A)_A$ can be recovered as a composite of the evaluation and coevaluation (and the identity bimodule for $A^\op$ can be similarly recovered), and such that the evaluation and coevaluation bimodules themselves admit adjoints.  A fully dualizable von Neumann algebra is in fact necessarily a finite direct sum of type $I$ factors.  More generally, full dualizability functions as a strong finiteness condition on the objects of a higher category.

The 3-category $\CN$ of conformal nets deloops the 2-category $\vN$ of von Neumann algebras, in the sense that $\Hom_{\CN}(1,1) \cong \vN$ \cite[Prop.\,1.22]{BDH(1*1)}.  In this paper, the fifth in a series~\cite{BDH(nets), BDH(modularity), BDH(1*1), BDH(3-category)} concerning the 3-category of conformal nets, we investigate the dualizability properties of conformal nets and their defects and sectors.  Our main result is that a conformal net is fully dualizable if (Theorem \ref{introthm:dualizability} below) and only if (Theorem \ref{introthm:dualizability -- other way} below) it has finite index.

\subsection*{Dualizability}
\addtocontents{toc}{\SkipTocEntry}
Recall that two $i$-morphisms $F: A \ra B$ and $G: B \ra A$ in an $n$-category ($i<n$) are called \emph{adjoint} (or \emph{dual}), denoted $F \dashv G$, if there exist $(i+1)$-morphisms, the unit $s: \id_B \ra G \cdot F$ and the counit $r: F \cdot G \ra \id_A$ such that the composite $(\id_G \cdot r) \cdot (s \cdot \id_G)$ is equivalent to $\id_G$ and the composite $(r \cdot \id_F) \cdot (\id_F \cdot s)$ is equivalent to $\id_F$; we say that $F$ admits $G$ as its right adjoint, or equivalently that $G$ admits $F$ as its left adjoint.  Similarly, two objects $f$ and $g$ in a symmetric monoidal $n$-category are called \emph{dual} if there exist $1$-morphisms, the coevaluation $s: 1 \ra g \otimes f$ and the evaluation $r: f \otimes g \ra 1$, such that the composite $(\id_g \otimes r) \cdot (s \otimes \id_g)$ is equivalent to $\id_g$ and the composite $(r \otimes \id_f) \cdot (\id_f \otimes s)$ is equivalent to $\id_f$.

An $i$-morphism $F: A \ra B$ in an $n$-category ($i<n$) is called \emph{fully dualizable} if there is an infinite chain of adjunctions $\cdots F^{LL} \dashv F^L \dashv F \dashv F^R \dashv F^{RR} \dashv \cdots$ such that every unit and counit morphism in each of the adjunctions in that chain itself admits a similar infinite chain of adjunctions, such that every unit and counit morphism in each of the adjunctions in all of those chains in turn admits an infinite chain of adjunctions, and so on until one reaches a chain of $(n-1)$-morphisms, at which point the conditions stop.  (We refer to an $(n-1)$-morphism that has an infinite chain of left and right adjoints, and is therefore fully dualizable, simply as `dualizable'.)  Similarly, an object in a symmetric monoidal $n$-category is fully dualizable (also called `$n$-dualizable') if it admits a dual and the coevaluation and evaluation morphisms are fully dualizable.  An $n$-category is said to \emph{have all duals} if every object is fully dualizable and every $i$-morphism ($i<n$) is fully dualizable.  (Note that the notions of fully dualizable and of having all duals do not depend on the exact model one chooses for symmetric monoidal $n$-categories, because the dualizability conditions can be phrased entirely in terms of homotopy 2-categories canonically associated to the $n$-category.  For a more detailed discussion of the notion of dualizability, see~\cite[Appendix A]{DSS}.)

The cobordism hypothesis~\cite{baezdolan,Lurie(On-classification-TFT),Ayala-Francis:cobordism-hypothesis} ensures that for any fully dualizable object $c$ in a symmetric monoidal $n$-category $\calc$, there is a local framed topological field theory $F_c : \mathrm{Bord}_n^{\mathrm{fr}} \ra \calc$ whose value on the positively framed point is $c$.\footnote{See the section on `Manifold invariants' below, and in particular Footnote~\ref{foot-ch}, for a discussion of the applicability of the cobordism hypothesis to the symmetric monoidal 3-category of conformal nets.}  In particular, for any such object, there is an associated framed $n$-manifold invariant.

\subsection*{Finiteness}
\addtocontents{toc}{\SkipTocEntry}
We will investigate the dualizability of objects and morphisms in the symmetric monoidal $3$-category of conformal nets.  To that end, we introduce notions of `finiteness' for nets, defects, and sectors, arranged in such a way that finiteness ensures both the existence of a dual (or adjoint) and in turn the finiteness of the coevaluation and evaluation (or unit and counit) morphisms.  We will therefore be able to successively establish that finiteness implies dualizability for sectors, defects, and conformal nets.

Consider the following subintervals of the standard circle:
\[
\begin{split}
S^1_\top:=\{z\in S^1\,|\, \Im\mathrm{m}(z)\ge 0\},
\qquad S^1_\dashv:=\{z\in S^1\,|\, \Re\mathrm{e}(z)\ge 0\},\\
S^1_\bot:=\{z\in S^1\,|\, \Im\mathrm{m}(z)\le 0\},
\qquad S^1_\vdash:=\{z\in S^1\,|\, \Re\mathrm{e}(z)\le 0\}.
\end{split}
\]
Moreover, let $I_1,\ldots, I_4\subset S^1$ be the subintervals indicated here:
  \begin{equation}\label{eq: picture of I_1 ... I_4}
  \tikzmath[scale=.03]
  { \useasboundingbox (-20,-22) rectangle (20,22);
        \draw (0,0) circle (15);
        \draw[thick] (10,10) -- (11.5,11.5)
              (10,-10) -- (11.5,-11.5)
              (-10,10) -- (-11.5,11.5)
              (-10,-10) -- (-11.5,-11.5)       
              (-20,0) node {$\scriptstyle I_2$} 
              (20,0) node {$\scriptstyle I_4$}
              (0,20) node {$\scriptstyle I_1$}
              (0,-20) node {$\scriptstyle I_3$};      }
  \end{equation} 
When appropriate, we equip the standard circle $S^1$ with its standard bicoloring $S^1_\circ=S^1_\vdash$, $S^1_\bullet=S^1_\dashv$, and give $I_1,\ldots, I_4$ the induced bicoloring, so that $I_1$ and $I_3$ are genuinely bicolored, $I_2$ is white, and $I_4$ is black.

We henceforth assume that all conformal nets and defects are \emph{semisimple}, that is finite direct sums of irreducible ones; (a conformal net or defect is irreducible if it does not admit a non-trivial direct sum decomposition).
\begin{definition} \label{def: finiteness for defects} \ \\\vspace{-10pt}
\begin{itemize}
\item[$\circ$] A conformal net $\cala$ is \emph{finite} if
the bimodule ${}_{\cala(I_1\cup I_3)}H_0(\cala)_{\cala(I_2\cup I_4)^\op}$
is dualizable as a morphism in the $2$-category of von Neumann algebras.\footnote{If $\cala$ is irreducible, then this condition is equivalent to the conformal net having finite index, as follows.  Recall from~\cite[Def 3.1]{BDH(nets)} that the index of a conformal net $\cala$ is defined as the minimal index of the inclusion $\cala(I_1 \cup I_3) \subset \cala(I_2 \cup I_4)'$.  By~\cite[Prop 7.5]{BDH(Dualizability+Index-of-subfactors)}, if this minimal index is finite, then the bimodule ${}_{\cala(I_1\cup I_3)}H_0(\cala)_{\cala(I_2\cup I_4)^\op}$ is dualizable.  Conversely, if that bimodule is dualizable, then, by~\cite[Def 5.1]{BDH(Dualizability+Index-of-subfactors)}, its statistical dimension is finite and thus, by~\cite[Def 5.10 \& Prop 7.3]{BDH(Dualizability+Index-of-subfactors)}, the corresponding minimal index is finite.\label{foot-dualizable}}\vspace{3pt}
\item[$\circ$] A defect ${}_\cala D_\calb$ between finite conformal nets is \emph{finite} if
the action of $D(I_1) \otimes_{alg} D(I_3)$ on $H_0(D)$ extends to $D(I_1) \,\bar{\otimes}\, D(I_3)$,
that is, if $H_0(D)$ is split as $D(I_1)$-$D(I_3)^\op$-bimodule.\vspace{3pt}
\item[$\circ$] A $D$-$E$-sector $H$ between defects $D$ and $E$, is \emph{finite} if the the bimodule ${}_{D(S^1_\top)} H_{E(S^1_\bot)^\op}$ is dualizable as a morphism in the $2$-category of von Neumann algebras.
\end{itemize}
\end{definition} 
\nid Note that, because there is a contravariant involution on the 2-morphisms of the 2-category of von Neumann algebras (namely the adjoint map of Hilbert spaces), a left adjoint bimodule is also a right adjoint bimodule and vice versa; thus for a bimodule to be dualizable it suffices that it admit a single adjoint.

\subsection*{Statement of results}
\addtocontents{toc}{\SkipTocEntry}
In order to construct adjunctions for defects, we will need to understand the Hilbert space assigned by a defect to a bicolored annulus.  To that end, we prove the following Peter--Weyl annular decomposition theorem for defects, generalizing the Kawahigashi--Longo--M\"uger theorem for conformal nets~\cite[Thm~9]{\KLM}.  Given a bicolored annulus $A=
\def\coords{
  \coordinate (a) at (0,0);
  \coordinate (b) at (.15,1);
  \coordinate (c) at (-.2,2);
  \coordinate (d) at (0,3);
  \coordinate (e) at (-5,.4);
  \coordinate (f) at (-5,2.6);
  \coordinate (g) at (-1,1.2);
  \coordinate (h) at (-1,2);
  \coordinate (a') at (d);
  \coordinate (b') at (c);
  \coordinate (c') at (b);
  \coordinate (d') at (a);
\begin{scope}[yshift = 85, rotate= 180]
  \coordinate (e') at (-5,.4);
  \coordinate (f') at (-5,2.6);
  \coordinate (g') at (-1,1.2);
  \coordinate (h') at (-1,2);
\end{scope}
}
\tikzmath[scale=.15]{\coordinate (a) at (0,0);\coordinate (b) at (.15,1);\coordinate (c) at (-.2,2);\coordinate (d) at (0,3);\coordinate (e) at (-5,.4);\coordinate (f) at (-5,2.6);\coordinate (g) at (-1,1.2);\coordinate (h) at (-1,2);\coordinate (a') at (d);\coordinate (b') at (c);\coordinate (c') at (b);\coordinate (d') at (a);\begin{scope}[yshift = 85, rotate= 180]\coordinate (e') at (-5,.4);\coordinate (f') at (-5,2.6);\coordinate (g') at (-1,1.2);\coordinate (h') at (-1,2);\end{scope}
\fill[fill = gray!30] (a) to [out = 180, in = -45, looseness=1.1] (e) to [out = -45 + 180, in = 225, looseness=1.1] (f) to [out = 225 + 180, in = 180, looseness=1.1] node (a1) [pos = .37] {} (d) to [out = 0, in = -45+180, looseness=1.1] (e') to [out = -45, in = 225+180, looseness=1.1] (f') to [out = 225, in = 0, looseness=1.1] (d') (c) to [out = 180, in = 45, looseness=1.1]  (h) to [out = 45 + 180, in = -225, looseness=1.1] (g) to [out = -225 + 180, in = 180, looseness=1.1] (b) to [out = 0, in = 45+180, looseness=1.1]  (h') to [out = 45, in = -225+180, looseness=1.1] (g') to [out = -225, in = 0, looseness=1.1] node (b1) [pos = .37] {} (b');
\draw[ultra thick] (5.5,1.5) to [out = 90+180, in = 225+180, looseness=1] (f') to [out = 225, in = 0, looseness=1.1] (d') (a) to [out = 180, in = -45, looseness=1.1] (e) to [out = -45 + 180, in = -90, looseness=1] (-5.5,1.5) (-1.18,1.55) to [out = -90, in = -225, looseness=1] (g)  to [out = -225 + 180, in = 180, looseness=1.1] (c') to [out = 0, in = 45+180, looseness=1.1]  (h') to [out = 45, in = -90, looseness=1] (1.18,1.45);
\fill[pattern = north east lines](5.5,1.5) to [out = 90+180, in = 225+180, looseness=1] (f') to [out = 225, in = 0, looseness=1.1] (d') -- (a) to [out = 180, in = -45, looseness=1.1] (e) to [out = -45 + 180, in = -90, looseness=1] (-5.5,1.5) -- (-1.18,1.55) to [out = -90, in = -225, looseness=1] (g)  to [out = -225 + 180, in = 180, looseness=1.1] (c') to [out = 0, in = 45+180, looseness=1.1]  (h') to [out = 45, in = -90, looseness=1] (1.18,1.45) -- cycle;\draw[->] (a1.center) -- ++ (180:0.01);
\draw (a) to [out = 180, in = -45, looseness=1.1] (e) to [out = -45 + 180, in = 225, looseness=1.1] (f) to [out = 225 + 180, in = 180, looseness=1.1] (d)
to [out = 0, in = -45+180, looseness=1.1] (e') to [out = -45, in = 225+180, looseness=1.1] (f') to [out = 225, in = 0, looseness=1.1] (d');
\draw(c) to [out = 180, in = 45, looseness=1.1]  (h) to [out = 45 + 180, in = -225, looseness=1.1] (g) to [out = -225 + 180, in = 180, looseness=1.1] (b)
to [out = 0, in = 45+180, looseness=1.1]  (h') to [out = 45, in = -225+180, looseness=1.1] (g') to [out = -225, in = 0, looseness=1.1] (b');
\draw[->] (b1.center) -- ++ (0:0.01);}$\;\! and a defect $D$, let $H_{\mathit{ann}}(D)$ denote the associated Hilbert space, considered as a `$\partial A$-sector', that is, a representation of the collection of algebras $\{D(I)\}$ for $I$ a subinterval of the boundary $\partial A$, subject to the following isotony and locality axioms:
\begin{equation}\label{eq:  orange star}
\begin{matrix}
\text{(isotony):} & \text{$I\subset J\Rightarrow \rho_{D(J)}|_{D(I)}=\rho_{D(I)}$,}
\\
\text{(locality):} & \text{$\mathring I\cap \mathring J=\emptyset \Rightarrow [\rho_{D(I)},\rho_{D(J)}]=0$.}
\end{matrix}
\end{equation}

Let $\Delta_D$ be the set of isomorphism classes of irreducible $D$-$D$-sectors, with the sector associated to $\lambda \in \Delta_D$ denoted $H_\lambda$.  Let $\bar \lambda$ denote the dual isomorphism class, and let $H_\lambda \otimes H_{\bar \lambda}$ denote the $\partial A$-sector where one circle acts on $H_\lambda$ and the other circle acts on $H_{\bar \lambda}$.
\begin{introthm}[Peter--Weyl for defects]
For a finite irreducible defect $D$, the annular sector $H_{\mathit{ann}}(D)$ is non-canonically isomorphic to the sum $\bigoplus_{\lambda\in\Delta_D} H_\lambda \otimes H_{\bar \lambda}$ of every sector tensor its dual.
\end{introthm}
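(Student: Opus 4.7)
\emph{Plan.} My plan is to adapt the Kawahigashi--Longo--M\"uger (KLM) Peter--Weyl theorem for conformal nets to the defect setting. The boundary $\partial A$ of the annulus consists of two bicolored circles $C_1 \sqcup C_2$, so $H_{\mathit{ann}}(D)$ carries commuting actions of the local $D$-algebras of each circle. The first step is to package these into two commuting $D$-$D$-sector structures on $H_{\mathit{ann}}(D)$, one per boundary component. The finiteness hypothesis on $D$---that $H_0(D)$ is split as a $D(I_1)$-$D(I_3)^\op$-bimodule---is essential here, since it guarantees via the gluing axioms of the net that the relevant tensor-product algebras act normally, promoting the naive joint representation to an honest bisector for the pair of circles.

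\emph{Decomposition.} Using the semisimplicity of the category of $D$-$D$-sectors (a consequence of finiteness of $D$ and of the ambient nets $\cala,\calb$), I would decompose $H_{\mathit{ann}}(D)$ along its first $D$-$D$-sector structure as
\[
H_{\mathit{ann}}(D) \;\cong\; \bigoplus_{\lambda \in \Delta_D} V_\lambda \ox H_\lambda,
\]
where each multiplicity space $V_\lambda$ inherits from the second circle a residual $D$-$D$-sector structure. The theorem then reduces to the canonical identification $V_\lambda \cong H_{\bar\lambda}$. For this I would write $V_\lambda = \Hom_{D\text{-}D}(H_\lambda, H_{\mathit{ann}}(D))$ and exploit a universal property of the annular sector: it represents the ``horizontal trace'' functor on the fusion category of $D$-$D$-sectors. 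Frobenius reciprocity combined with the duality $H_\lambda^\ast \cong H_{\bar\lambda}$ then delivers the required identification for each irreducible $\lambda$.

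\emph{Main obstacle.} The hard direction, echoing KLM, is showing that the decomposition is exhaustive---that no irreducible $D$-$D$-sector is missing and no multiplicities are doubled. For conformal nets this is proved via the 2-interval subfactor $\cala(I_1 \cup I_3) \subset \cala(I_2 \cup I_4)'$ and a match of its global index against $\sum_\lambda \dim(H_\lambda)^2$. The defect analog requires analyzing the subfactor $D(I_1 \cup I_3) \subset D(I_2 \cup I_4)'$: finiteness of $D$ together with finiteness of $\cala$ and $\calb$ should yield finite Jones index, and the Peter--Weyl equality would then follow by matching the global dimension against $\sum_\lambda \dim(H_\lambda)\dim(H_{\bar\lambda})$. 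Constructing an explicit isometric embedding $\bigoplus_\lambda H_\lambda \ox H_{\bar\lambda} \hookrightarrow H_{\mathit{ann}}(D)$ out of the categorical intertwiners assembled above, and verifying that it is surjective by this dimensional matching, is where I expect the bulk of the technical work to lie.
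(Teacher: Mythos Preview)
Your overall shape is right---decompose $H_{\mathit{ann}}$ as a bisector and compute multiplicities via duality/Frobenius reciprocity---but you have misidentified where the work lies, and the detour through global index matching is unnecessary.

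The paper's argument is more direct than you anticipate. Because $H_{\mathit{ann}} = H_0(S_l,D) \boxtimes_B H_0(S_r,D)$ is a composition of dualizable bimodules (this is where finiteness of $D$ enters, via dualizability of the vacuum bimodule), it is itself a dualizable $A$-$C$-bimodule and hence splits into finitely many irreducibles. Combined with the preceding theorem (all irreducible $D$-sectors are finite and there are only finitely many of them), each irreducible summand is a tensor product $H_\lambda(S_m) \otimes H_\mu(S_b)$, and one writes
\[
H_{\mathit{ann}} \;\cong\; \bigoplus_{\lambda,\mu \in \Delta_D} N_{\lambda\mu}\, H_\lambda(S_m) \otimes H_\mu(S_b)
\]
with finite $N_{\lambda\mu}$. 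The entire content then reduces to computing $N_{\lambda\mu} = \dim \hom_{A,C}\big(H_\lambda \otimes H_\mu,\, H_l \boxtimes_B H_r\big)$. Here Frobenius reciprocity is applied at the \emph{bimodule} level: the explicit dual $\check H_l := H_0(-S_l,D)$ of the vacuum bimodule lets one rewrite this as $\hom_{B,C}\big(\check H_l \boxtimes_A (H_\lambda \otimes H_\mu),\, H_r\big)$, which after vacuum-fusion simplifications becomes $\hom_{B,C}(K, H_r)$ with $K$ the vertical fusion $H_\lambda \boxtimes H_\mu$. This hom is one-dimensional iff $\mu = \bar\lambda$ and zero otherwise.

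So your step~3, correctly implemented, already yields the complete answer $N_{\lambda\mu} = \delta_{\bar\lambda\mu}$; there is no residual exhaustiveness question. Your ``main obstacle'' paragraph proposes matching against a 2-interval subfactor index, closer in spirit to the original KLM argument for nets; that route might be made to work but is harder, and you would first have to establish the relevant index formula for defects. The paper avoids this entirely by exploiting the concrete presentation of $H_{\mathit{ann}}$ as a fusion of two vacuum sectors, one of which has an explicitly identified dual. Your invocation of a ``horizontal trace'' universal property is likewise more than is needed and would itself require justification in this analytic setting.
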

\nid This is proven as Theorem~\ref{thm:KLM} in the text.  We may depict this result as
\[
\def\coords{
  \coordinate (a) at (0,0);
  \coordinate (b) at (.15,1);
  \coordinate (c) at (-.2,2);
  \coordinate (d) at (0,3);
  \coordinate (e) at (-5,.4);
  \coordinate (f) at (-5,2.6);
  \coordinate (g) at (-1,1.2);
  \coordinate (h) at (-1,2);
  \coordinate (a') at (d);
  \coordinate (b') at (c);
  \coordinate (c') at (b);
  \coordinate (d') at (a);
\begin{scope}[yshift = 85, rotate= 180]
  \coordinate (e') at (-5,.4);
  \coordinate (f') at (-5,2.6);
  \coordinate (g') at (-1,1.2);
  \coordinate (h') at (-1,2);
\end{scope}
}
\tikzmath[scale=.35]{\coordinate (a) at (0,0);\coordinate (b) at (.15,1);\coordinate (c) at (-.2,2);\coordinate (d) at (0,3);\coordinate (e) at (-5,.4);\coordinate (f) at (-5,2.6);\coordinate (g) at (-1,1.2);\coordinate (h) at (-1,2);\coordinate (a') at (d);\coordinate (b') at (c);\coordinate (c') at (b);\coordinate (d') at (a);\begin{scope}[yshift = 85, rotate= 180]\coordinate (e') at (-5,.4);\coordinate (f') at (-5,2.6);\coordinate (g') at (-1,1.2);\coordinate (h') at (-1,2);\end{scope}
\fill[fill = gray!30] (a) to [out = 180, in = -45, looseness=1.1] (e) to [out = -45 + 180, in = 225, looseness=1.1] (f) to [out = 225 + 180, in = 180, looseness=1.1] node (a1) [pos = .37] {} (d) to [out = 0, in = -45+180, looseness=1.1] (e') to [out = -45, in = 225+180, looseness=1.1] (f') to [out = 225, in = 0, looseness=1.1] (d') (c) to [out = 180, in = 45, looseness=1.1]  (h) to [out = 45 + 180, in = -225, looseness=1.1] (g) to [out = -225 + 180, in = 180, looseness=1.1] (b) to [out = 0, in = 45+180, looseness=1.1]  (h') to [out = 45, in = -225+180, looseness=1.1] (g') to [out = -225, in = 0, looseness=1.1] node (b1) [pos = .37] {} (b');
\draw[ultra thick] (5.5,1.5) to [out = 90+180, in = 225+180, looseness=1] (f') to [out = 225, in = 0, looseness=1.1] (d') (a) to [out = 180, in = -45, looseness=1.1] (e) to [out = -45 + 180, in = -90, looseness=1] (-5.5,1.5) (-1.18,1.55) to [out = -90, in = -225, looseness=1] (g)  to [out = -225 + 180, in = 180, looseness=1.1] (c') to [out = 0, in = 45+180, looseness=1.1]  (h') to [out = 45, in = -90, looseness=1] (1.18,1.45);
\fill[pattern = north east lines](5.5,1.5) to [out = 90+180, in = 225+180, looseness=1] (f') to [out = 225, in = 0, looseness=1.1] (d') -- (a) to [out = 180, in = -45, looseness=1.1] (e) to [out = -45 + 180, in = -90, looseness=1] (-5.5,1.5) -- (-1.18,1.55) to [out = -90, in = -225, looseness=1] (g)  to [out = -225 + 180, in = 180, looseness=1.1] (c') to [out = 0, in = 45+180, looseness=1.1]  (h') to [out = 45, in = -90, looseness=1] (1.18,1.45) -- cycle;\draw[->] (a1.center) -- ++ (180:0.01);
\draw (a) to [out = 180, in = -45, looseness=1.1] (e) to [out = -45 + 180, in = 225, looseness=1.1] (f) to [out = 225 + 180, in = 180, looseness=1.1] (d)
to [out = 0, in = -45+180, looseness=1.1] (e') to [out = -45, in = 225+180, looseness=1.1] (f') to [out = 225, in = 0, looseness=1.1] (d');
\draw(c) to [out = 180, in = 45, looseness=1.1]  (h) to [out = 45 + 180, in = -225, looseness=1.1] (g) to [out = -225 + 180, in = 180, looseness=1.1] (b)
to [out = 0, in = 45+180, looseness=1.1]  (h') to [out = 45, in = -225+180, looseness=1.1] (g') to [out = -225, in = 0, looseness=1.1] (b');
\draw[->] (b1.center) -- ++ (0:0.01);} 
  \; \cong \;
  \bigoplus_{\lambda\in\Delta_D}\, \;
\tikzmath[scale=.35]{\coords
\useasboundingbox (-2,.4) rectangle (2,2.6);
\filldraw[fill = gray!30] 
	(c) to [out = 180, in = 45, looseness=1.1] 
	(h) to [out = 45 + 180, in = -225, looseness=1.1]
	(g) to [out = -225 + 180, in = 180, looseness=1.1]
	(c') to [out = 0, in = 45 + 180, looseness=1.1] 
	(h') to [out = 45, in = -225 + 180, looseness=1.1]
	(g') to [out = -225, in = 0, looseness=1.1]                                     node (b1) [pos = .7] {}
	(b');
\filldraw[ultra thick, pattern = north east lines] (-1.18,1.55) to [out = -90, in = -225, looseness=1] (g)  to [out = -225 + 180, in = 180, looseness=1.1] (c') to [out = 0, in = 45+180, looseness=1.1]  (h') to [out = 45, in = -90, looseness=1] (1.18,1.45); \draw[->] (b1.center) -- ++ (0:0.01);
\node[circle, fill = gray!30, inner sep=.5, draw, densely dotted] at (0,1.5)  {$\scriptscriptstyle\lambda$};
} 
\otimes
\tikzmath[scale=.35]{
\coords
\filldraw[fill = gray!30]
	(a) to [out = 180, in = -45, looseness=1.1] 
	(e) to [out = -45 + 180, in = 225, looseness=1.1]
	(f) to [out = 225 + 180, in = 180, looseness=1.1] node (a1) [pos = .5] {}
	(a') to [out = 0, in = -45 + 180, looseness=1.1] 
	(e') to [out = -45, in = 225 + 180, looseness=1.1]
	(f') to [out = 225, in = 0, looseness=1.1] (d');
\draw[->] (a1.center) -- ++ (175:0.01);
\filldraw[ultra thick, pattern = north east lines] (5.5,1.5) to [out = 90+180, in = 225+180, looseness=1] (f') to [out = 225, in = 0, looseness=1.1] (a) to [out = 180, in = -45, looseness=1.1] (e) to [out = -45 + 180, in = -90, looseness=1] (-5.5,1.5);
\node[circle, fill = gray!30, inner sep=1.5, draw, densely dotted] at (0,1.5)  {$\scriptstyle\bar \lambda$};
} 
\]

Equipped with this and other results about defect annular sectors, we proceed to our main topic of dualizability properties of conformal nets.  We show that finite sectors are dualizable; that finite defects are dualizable with finite unit and counit sectors (and hence are fully dualizable); and that finite conformal nets are dualizable with finite evaluation and coevaluation defects (and hence are fully dualizable).  Altogether this implies that the collection of finite conformal nets, finite defects, finite sectors, and intertwiners forms a sub-3-category of the 3-category of conformal nets, and establishes the following:
\begin{introthm}[Dualizability of finite nets, defects, and sectors] \label{introthm:dualizability}
The $3$-category of finite semisimple conformal nets, finite semisimple defects, finite sectors, and intertwiners has all duals.
\end{introthm}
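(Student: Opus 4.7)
The argument proceeds in three layers, working from the top dimension down: first, finite sectors are dualizable; second, finite defects admit adjoints whose units and counits are themselves finite sectors, hence are fully dualizable; and third, finite nets admit duals whose evaluation and coevaluation defects are finite, hence are fully dualizable. In parallel, one verifies that finite nets, defects, and sectors are closed under composition, so as to assemble them into a sub-3-category. The Peter--Weyl Theorem~A is the central tool in the two upper layers: in each case the required triangle identities reduce, after identifying the composite of evaluation and coevaluation with an annular sector, to the decomposition $\bigoplus_{\lambda\in\Delta_D} H_\lambda \otimes H_{\bar\lambda}$, whose $\lambda = 0$ summand supplies the identity component needed to witness the identity.

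\emph{Sectors.} For a sector $H$ between defects $D$ and $E$, finiteness is by definition the dualizability of ${}_{D(S^1_\top)}H_{E(S^1_\bot)^{op}}$ in $\vN$. Because the Hilbert-space adjoint gives a contravariant involution on 2-morphisms of $\vN$, the existence of a single adjoint already yields both a left and a right adjoint $\bar H$, and the resulting unit and counit intertwiners automatically satisfy the triangle identities at the level of $\CN$ because these identities already hold in $\vN$.

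\emph{Defects.} For a finite defect ${}_\cala D_\calb$, the dual defect $\bar D$ between $\calb$ and $\cala$ is constructed from the contragredient vacuum sector, invoking \lemdualvacuum\ and \corcontragredientbimodule. The unit sector $\id_\cala \to \bar D \cdot D$ and counit sector $D \cdot \bar D \to \id_\calb$ are read off from the gluing of $D$ and $\bar D$ along a half-annulus; Theorem~A, applied to the full annular sector of $\bar D \cdot D$, identifies the composite $(r \cdot \id_D)\cdot(\id_D \cdot s)$ with the projection onto the $\lambda = 0$ summand, which is precisely $\id_D$, and similarly for the other triangle. Finiteness of the unit and counit is the nontrivial input: the split-bimodule property of $H_0(D)$ is transported through the half-annulus gluing to exhibit the unit/counit Hilbert spaces as compositions of dualizable bimodules in $\vN$, so they are finite as sectors.

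\emph{Nets and main obstacle.} For a finite net $\cala$, the dual is $\cala^{op}$, and the evaluation and coevaluation defects are the canonical $(\cala \otimes \cala^{op})$-$\IC$ and $\IC$-$(\cala^{op} \otimes \cala)$ defects built from $\cala$ and its vacuum sector. The finiteness hypothesis on $\cala$ asserts exactly the dualizability of ${}_{\cala(I_1 \cup I_3)}H_0(\cala)_{\cala(I_2 \cup I_4)^{op}}$, which is the split-bimodule condition defining finiteness of these two defects; the triangle identities for nets then reduce, via \thmconformalblocks\ and a bicolored application of Theorem~A, to a decomposition of the vacuum annular sector of $\cala$ under its four quadrant algebras. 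The principal difficulty throughout lies in the defect layer: upgrading dualizability of a single bimodule in $\vN$ to dualizability in $\CN$ requires simultaneous control of the algebra actions on all four arcs of the circle, and it is Theorem~A that pins down the bicolored annular sector precisely enough for the triangle identities to be verified at the level of intertwiners.
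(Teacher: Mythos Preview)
Your three-layer architecture is correct, and your treatment of sectors matches the paper. But the mechanisms you propose at the defect and net levels contain real gaps.

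\textbf{Defects.} The adjoint of ${}_\cala D_\calb$ is not a ``contragredient'' construction; it is the reversed defect $D^\dagger$ obtained by swapping the bicoloring, $D^\dagger(I)=D(I^{\rev})$. More importantly, you have the role of Theorem~A backwards. The paper does \emph{not} use the Peter--Weyl decomposition to verify the triangle identities. Rather, Theorem~A (via its corollary that the annulus with the hole filled in gives back $H_0(D)$) is used to \emph{construct} the unit and counit: one needs the action of $D(I)$ on $H_0(D)$ for intervals $I$ with \emph{two} color-change points, and this extended action is produced by first acting on the annular Hilbert space and then ``plugging the hole'' using the $\lambda=0$ summand. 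Once $r$ and $s$ exist as sectors, the triangle identity is checked by an entirely different mechanism: the zig-zag composite reduces (via a snake interchange lemma) to $H_0(D)\boxtimes (r\boxtimes_{D^\dagger} s)\boxtimes H_0(D)$, and each factor is a vacuum sector of $D$, so repeated application of ``vacuum fused with vacuum is vacuum'' collapses this to $H_0(D)=1_D$. Your claim that the zig-zag ``is the projection onto the $\lambda=0$ summand'' is not a verification of a triangle identity: the composite must be \emph{isomorphic} to $1_D$, not merely contain it as a summand, and the annular sector $H_{\mathit{ann}}$ is not the zig-zag composite in the first place. Finiteness of $r$ and $s$ is not obtained by transporting the split property through gluing; it is a direct citation of the finiteness of $H_0(D)$ under the relevant half-circle algebras.

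\textbf{Nets.} Theorem~A plays no role here, and neither does the conformal-blocks theorem you cite. The evaluation and coevaluation are given by explicit formulas $r(I)=\cala(I_\circ\cup_{I_{\circ\bullet}}\bar I_\circ)$ and $s(I)=\cala(\bar I_\bullet\cup_{I_{\circ\bullet}} I_\bullet)$, and the duality equations are verified by a direct one-dimensional computation: the composite $(1\otimes s)\circledast(r\otimes 1)$ evaluates on a bicolored interval to $\cala$ of a zig-zag interval diffeomorphic to the original, hence is a weak unit. Finiteness of $r$ and $s$ is exactly the split property of $\cala$, as you say, but the reduction is by inspection of what algebras act on $H_0(r)\cong H_0(\cala)$, not via any annular decomposition.
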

\nid This result is summarized as Theorem~\ref{thm-duals} in the text, collecting the results of Proposition~\ref{prop:sectoradjoint}, Corollary~\ref{cor:sectordualizable}, Proposition~\ref{prop:defectadjoint}, Corollary~\ref{cor:defectdualizable}, Theorem~\ref{thm:finitenetdualizable}, and Corollary~\ref{cor:netdualizable}.

Having established that finiteness implies full dualizability, we conversely establish that full dualizability ensures finiteness.\footnote{Note that we do not have a 3-category of all not-necessarily-finite conformal nets (because we do not know that the composition of two defects between non-finite nets is again a defect); however the notion of dualizability is still well defined for an arbitrary not-necessarily-finite net (namely as the condition that the canonical evaluation and coevaluation defects both have ambidextrous adjoints with dualizable unit and counit sectors), and therefore it makes sense to claim and prove as we do that a dualizable net is finite.}
\begin{introthm}[Finiteness of dualizable nets, defects, and sectors] \label{introthm:dualizability -- other way}
A fully dualizable conformal net, defect, or sector is necessarily finite.
\end{introthm}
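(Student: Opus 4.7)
The plan is to treat the three statements (for sectors, defects, and nets) in that order, since the analysis at each stage will exploit the finiteness of the units and counits of dualization data established at the previous stage.

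For a $D$-$E$-sector $H$ dualizable as a 2-morphism in $\CN$, the underlying bimodule ${}_{D(S^1_\top)}H_{E(S^1_\bot)^{\op}}$ is obtained by restriction of scalars. Horizontal composition of sectors in $\CN$ specializes on these subalgebras to Connes fusion of bimodules, and sector intertwiners restrict to bimodule intertwiners. Consequently a dualization of $H$ in the 2-category $\CN(D,E)$ restricts to a dualization of the underlying bimodule in $\vN$. By the self-duality of the 2-category of von Neumann algebras on 2-morphisms (recalled after Definition~\ref{def: finiteness for defects}), the bimodule is dualizable and $H$ is finite.

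For a defect ${}_\cala D_\calb$ assumed fully dualizable, there is a dual $D^*$ together with dualizable unit and counit sectors $\iota:\id_\calb\Rightarrow D^*\cdot D$ and $\kappa:D\cdot D^*\Rightarrow\id_\cala$. By the sector case just settled, both $\iota$ and $\kappa$ are finite. The strategy is to use these finite sectors, together with the zigzag identities, to realize $H_0(D)$ as a retract of a Hilbert space assembled from compositions of $D$ and $D^*$ on which the action of the von Neumann tensor product $D(I_1)\,\bar\otimes\, D(I_3)$ is manifest, the splitness being imported through Connes fusion from the finiteness of $\iota$ or $\kappa$. Since the class of bimodules carrying a split action of $D(I_1)\,\bar\otimes\, D(I_3)$ is closed under direct summands, $H_0(D)$ is split and $D$ is finite.

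For a fully dualizable net $\cala$, the evaluation and coevaluation defects between $\cala$ and its dual are themselves fully dualizable and hence, by the defect case, finite. The bimodule ${}_{\cala(I_1\cup I_3)}H_0(\cala)_{\cala(I_2\cup I_4)^{\op}}$ can then be reconstructed by the geometric gluing that produces the identity defect $\id_\cala$ from the evaluation and coevaluation: cutting the standard circle at the two points separating the white and black arcs realizes the vacuum sector as a Connes-fusion composition of bimodules arising from the finite ev and coev defects, each of which is dualizable in $\vN$. Since Connes fusion of dualizable bimodules is dualizable, the vacuum sector bimodule is dualizable, so $\cala$ is finite. The main obstacle is the defect case, where one must translate the abstract zigzag identities into an explicit Connes-fusion retraction that carries an honestly split action of $D(I_1)\,\bar\otimes\, D(I_3)$ onto $H_0(D)$; the subtlety is to match the geometric cut data producing $H_0(D)$ with the algebraic structure of the sectors $\iota$ and $\kappa$ on the relevant pieces of the bicolored annulus.
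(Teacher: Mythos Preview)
Your treatment of sectors is correct and matches the paper's argument in Proposition~\ref{prop:sectoradjoint}.

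For defects, your sketch has the right shape---use the zigzag identity to relate $H_0(D)$ to a larger Hilbert space on which the split action is visible---but you misidentify the source of the splitness. Finiteness of the unit and counit \emph{sectors} means they are dualizable bimodules, not split bimodules, and dualizability does not propagate a split action through Connes fusion. In fact the paper's Proposition~\ref{prop:defectfinite} assumes only that $D$ has an adjoint $D^\vee$, with no hypothesis on the unit or counit sectors. Its mechanism is different from what you describe: the zigzag identity, combined with the invertibility of $r$ and $s$ as bimodules over suitable corner algebras (invertibility, not finiteness), lets one rewrite $H_0(D)$ as $H_0(D)\boxtimes_\calb H_0(D^\vee)\boxtimes_\cala H_0(D)$; the middle factor $H_0(D^\vee)$ is automatically split in the \emph{horizontal} direction (left $\calb$-side versus right $\cala$-side), this being a general property of defect vacuum sectors rather than a consequence of any dualizability hypothesis, and splitness then propagates through fusion to the whole.

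For nets, there is a genuine circularity in your plan. You want to apply the defect result to the evaluation and coevaluation defects, but Proposition~\ref{prop:defectfinite} is stated and proved only for defects between \emph{finite} nets---the hypothesis is needed, among other things, so that composites like $D\circledast_\calb D^\vee$ are known to be defects at all. Since finiteness of $\cala$ is what you are trying to prove, you cannot assume $\cala\otimes\cala^\op$ is finite. The paper's Theorem~\ref{thm:netfinite} avoids this by working one level up: it uses the zigzag identity for the adjunction of the evaluation defect $r$ itself (with unit sector $S$ and counit sector $R$), exploits the invertibility of $R$ as a bimodule over certain corner algebras to recover $H_0(\cala)$ from $R$, and then uses the assumed dualizability of $R$ directly to conclude that the vacuum bimodule of $\cala$ is dualizable. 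Your final sentence also conflates two different notions: finiteness of a defect is a \emph{splitness} condition on its vacuum, whereas finiteness of a net is a \emph{dualizability} condition on its vacuum; the bridge between them is not the ``Connes fusion of dualizable bimodules is dualizable'' step you invoke.
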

\nid See Corollary~\ref{cor:sectordualizable}, Proposition~\ref{prop:defectfinite}, Theorem~\ref{thm:netfinite}, and Scholium~\ref{end scholium} in the text for the precise statements and proofs.

\subsection*{Manifold invariants}
\addtocontents{toc}{\SkipTocEntry}

By Theorem~\ref{introthm:dualizability} and under the (overwhelmingly plausible but not yet proven) assumption that the cobordism hypothesis applies to the symmetric monoidal $3$-category of conformal nets constructed in~\cite{BDH(3-category)}\footnote{As the cobordism hypothesis applies most immediately to symmetric monoidal $n$-categories modeled as $\Gamma$-objects in complete $n$-fold Segal spaces~\cite{Lurie(On-classification-TFT),Calaque-Scheimbauer:Note-cobordism-category}, this assumption can be made precise in the form of the following conjecture: there exists a $\Gamma$-object in complete $3$-fold Segal spaces $\CN'$ together with an equivalence of tricategories $E: [\CN] \rightarrow [\CN']$;
here $\CN$ denotes the symmetric monoidal 3-category of finite conformal nets constructed as an internal dicategory in symmetric monoidal categories~\cite{BDH(3-category)}, and the brackets $[-]$ denote the tricategory associated to either the internal dicategory in symmetric monoidal categories or the $\Gamma$-object in complete $3$-fold Segal spaces.\label{foot-ch}},
associated to any finite conformal net there is a $3$-dim\-en\-sion\-al local framed topological field theory whose value on a point is the conformal net.  Naturally, one wonders what manifold invariants are given by this topological field theory.  

For $1$-dimensional manifolds, the conformal net field theory invariants are given, projectively (that is, up to tensoring by an invertible von Neumann algebra), by the extension, constructed in~\cite[\thmnetsarbitrarymanifolds]{BDH(modularity)}, of the conformal net to a functor from $1$-manifolds to the category of von Neumann algebras.  In particular, the invariant of a circle is the direct sum over irreducible representations of the algebra of bounded operators on the underlying representation space (see~\cite[\thmnetoncircle]{BDH(modularity)}).  One may also express the invariant of a circle as the colimit in the category of von Neumann algebras of the value of the conformal net on all the subintervals of the circle (see~\cite[\propnetoncircle]{BDH(modularity)}).

For $2$-dimensional manifolds, the conformal net field theory invariants are given, projectively (that is, up to tensoring by an invertible Hilbert space), by the functor constructed in~\cite[\thmconformalblocks]{BDH(modularity)}, from $2$-manifolds to Hilbert spaces.  In particular, the invariant of a closed $2$-manifold is given, projectively, by the space of conformal blocks associated to that surface.

For any finite-index conformal net $\cala$, under the aforementioned assumption that the cobordism hypothesis applies, our results provide a complex-valued invariant $Z_\cala(M)$ of any closed framed $3$-manifold $M$.  When the conformal net is $\caln_{G,k}$, the one associated to a central extension of the loop group $LG$ (and assuming this net is indeed of finite-index), the category $\mathrm{Rep}(\caln_{G,k})$ of representations of the net is thought to be isomorphic to the category $\mathrm{Rep}(LG,k)$ of representations of the loop group $LG$ at level $k$; see~\cite{CS(pt)} for a discussion of this comparison problem and~\cite[Sec 5.1]{BinGui:CategoricalExtensionsOfConformalNets} for progress towards a solution.  Provided the representation categories of the conformal net and of the loop group are indeed isomorphic as modular tensor categories, then we expect the $3$-manifold invariant $Z_{\caln_{G,k}}(M)$ determined by the conformal-net-valued local field theory is the Reshetikhin--Turaev invariant of $M$ associated to the modular tensor category $\mathrm{Rep}(LG,k)$ of representations of the associated loop group.

\subsection*{Acknowledgments}
\addtocontents{toc}{\SkipTocEntry}

AB was funded in part by the DFG under Germany's Excellence Strategy EXC 2044-390685587.  CD was partially supported by a Miller Research Fellowship and by EPSRC grant EP/S018883/1.  AH was supported in part by grant VP2-2013-005 from the Leverhulme Trust.

\section{Defect algebras acting on annuli and discs}

We will, later in Section~\ref{sec:dualizablenets}, interpret the fusion of a defect and its adjoint as associating an algebra to an interval with not just a single transition point from white to black, but instead two: one from white to black, and then one back to white.  To construct the unit and counit of the adjunction, we will need an action of this larger algebra on the vacuum sector of the original defect.  We will construct such an action by first constructing an action on a Hilbert space associated to an annulus and then ``plugging the hole'' of the annulus with a vacuum sector.

Working up to those constructions, in this section we study the Hilbert space associated to a bicolored annulus; we prove a Peter--Weyl theorem decomposing the defect annular Hilbert space as a sum of tensor products of sectors and their duals, and we define the algebras associated to arbitrary bicolored $1$-manifolds.

\subsection{The Hilbert space for a bicolored annulus}

Given a finite defect $D$ between finite conformal nets,
the bimodule
\begin{equation}\label{eq: h0D is dualizable}
{}_{D(I_1)\bar\otimes D(I_3)}H_0(S,D)_{(\cala(I_2)\bar\otimes \calb(I_4))^\op}
\end{equation}
is always dualizable (see~\cite[\propKLMfinitenessdefects]{BDH(1*1)} and Footnote~\ref{foot-dualizable}).
Here $S$ is a bicolored circle decomposed into intervals $I_1,\ldots, I_4$, as in \eqref{eq: picture of I_1 ... I_4}, and the vacuum sector $H_0(S,D)$ is described in~\cite[\notationnoncanonvacuumdefects]{BDH(1*1)}.  Let $-S$, $-I_1,\ldots,-I_4$ be the same manifolds with the reverse orientations. The following result explicitly identifies the dual, generalizing the corresponding result for conformal nets~\cite[\lemdualvacuum]{BDH(nets)}:

\begin{lemma}\label{lem: H_0(-S) -- for defects}
Under the canonical identifications
$(D(-I_1)\,\bar\otimes\, D(-I_3))^\op\cong D(I_1)\,\bar\otimes\, D(I_3)$ and
$(\cala(-I_2)\,\bar\otimes\, \calb(-I_4))^\op\cong \cala(I_2)\,\bar\otimes\, \calb(I_4)$,
the dual of the bimodule \eqref{eq: h0D is dualizable} 
is given by \medskip
\[
{}_{\cala(-I_2)\bar\otimes \calb(-I_4)}H_0(-S,D)_{(D(-I_1)\bar\otimes D(-I_3))^\op}.
\]
\end{lemma}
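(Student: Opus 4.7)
The plan is to adapt the proof strategy of the corresponding result for conformal nets, \cite[\lemdualvacuum]{BDH(nets)}, to the present defect setting. By \cite[\propKLMfinitenessdefects]{BDH(1*1)} we already know that the bimodule~\eqref{eq: h0D is dualizable} is dualizable, so the task reduces to exhibiting explicit evaluation and coevaluation bimodule maps witnessing that its dual is the claimed $H_0(-S,D)$ with the specified bimodule structure, and then verifying the two zigzag identities.

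The core ingredient should be a canonical antilinear isometry
\[
J : H_0(S,D) \to H_0(-S,D)
\]
realizing orientation reversal $S \rightsquigarrow -S$ at the level of vacuum sectors. Under the canonical algebra identifications of the statement, one expects $J$ to intertwine the left $D(I_1)\bar\otimes D(I_3)$-action and the right $(\cala(I_2)\bar\otimes \calb(I_4))^\op$-action on $H_0(S,D)$ with, respectively, the right $(D(-I_1)\bar\otimes D(-I_3))^\op$-action and the left $\cala(-I_2)\bar\otimes \calb(-I_4)$-action on $H_0(-S,D)$. From $J$ one constructs an evaluation
\[
\mathrm{ev} : H_0(-S,D) \boxtimes_{D(I_1)\bar\otimes D(I_3)} H_0(S,D) \to L^2\bigl(\cala(I_2)\bar\otimes \calb(I_4)\bigr)
\]
by sending $\xi \boxtimes \eta$ to the element of $L^2$ corresponding to the sesquilinear pairing $\langle J^{-1}\xi, - \rangle$ applied to $\eta$, and a symmetric coevaluation. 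The zigzag identities then reduce to standard Tomita--Takesaki relations for $J$ on what is essentially a standard form of $\cala(I_2)\bar\otimes \calb(I_4)$.

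The main obstacle will be the construction and verification of the properties of $J$ in the defect case. For a purely-colored conformal net, $J$ is just the modular conjugation on the vacuum in standard form, handled in \cite[\lemdualvacuum]{BDH(nets)}. In the defect setting $H_0(S,D)$ is constructed in \cite[\notationnoncanonvacuumdefects]{BDH(1*1)} by fusing sectors of the ambient nets $\cala$ and $\calb$ along the defect $D$; the antilinear isometries for those constituent net vacuum sectors should assemble into a single $J$, but one must check that the assembly is compatible with the locality of the defect algebras $D(I_1), D(I_3)$ at the two transition points, and that the canonical algebra identifications $(D(-I_i))^\op \cong D(I_i)$ match the ones induced by the assembled $J$. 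A possibly cleaner alternative, bypassing $J$, is to recognize the Connes fusion $H_0(-S,D) \boxtimes_{D(I_1)\bar\otimes D(I_3)} H_0(S,D)$ geometrically as the vacuum sector associated to the bicolored $1$-manifold obtained by gluing $-S$ and $S$ along $I_1 \cup I_3$, and then deform/straighten this configuration to identify the result with the standard form $L^2(\cala(I_2)\bar\otimes \calb(I_4))$, appealing to a defect analogue of the geometric computations underlying \cite[\lemdualvacuum]{BDH(nets)}.
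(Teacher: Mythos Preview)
Your approach would work in principle, but it is considerably more laborious than the paper's, and it rests on a misconception about the construction of $H_0(S,D)$ that makes you anticipate difficulties that do not arise.

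First, the paper avoids building evaluation and coevaluation maps and checking zigzag identities altogether. Instead it invokes \cite[\corcontragredientbimodule]{BDH(Dualizability+Index-of-subfactors)}: for a dualizable bimodule between von Neumann algebras, the dual is canonically the complex conjugate $\overline{H}$ with flipped actions $b\cdot\bar\xi\cdot a=\overline{a^*\xi b^*}$. This reduces the lemma to a pure identification problem: show that $\overline{H_0(S,D)}$ with those flipped actions is isomorphic to $H_0(-S,D)$ with its defining actions. No ev/coev, no zigzags.

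Second, your picture of $H_0(S,D)$ as a fusion of ambient-net vacuum sectors along the defect is not how it is defined here. By \cite[\notationnoncanonvacuumdefects]{BDH(1*1)}, $H_0(S,D)$ is simply $L^2(A)$ for $A:=D(S^1_\top)$, with the actions of $D(K_i)$ on the lower half implemented via the reflection $j$ in the horizontal axis. Likewise $H_0(-S,D)=L^2(D(-S^1_\top))\cong L^2(A^\op)$. So there is nothing to assemble: the only antilinear isometry in play is the modular conjugation $J:\overline{L^2(A)}\to L^2(A)$ for the single algebra $A$. The paper then writes out the actions on $\overline{L^2(A)}$ (the abstract dual) and on $L^2(A^\op)$ (the proposed geometric dual) explicitly in terms of six subintervals $K_1,\ldots,K_6$, and checks by direct inspection that $J$ intertwines them. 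The compatibility at the transition points that worried you is automatic because all actions are expressed through the single algebra $A=D(S^1_\top)$ and its opposite.

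Your geometric alternative (gluing $-S$ to $S$ along $I_1\cup I_3$ and straightening) would also be more work: it requires a defect version of the gluing/vacuum lemmas, whereas the $L^2$ identification needs only the modular conjugation.
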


\begin{proof}
We assume without loss of generality that $S$ is the standard bicolored circle. 
Let us write $S^1=K_1\cup\ldots \cup K_6$, with $K_1=I_4\cap S^1_\top$, $K_2=I_1$, $K_3=I_2\cap S^1_\top$, $K_4=I_2\cap S^1_\bot$, $K_5=I_3$, $K_6=I_4\cap S^1_\bot$
\begin{equation*}
\tikzmath[scale=.07]
{
\draw (0,0) circle (15);\draw[ultra thick] (0,-15) arc (-90:90:15);
\draw(45:14) -- (45:16.5)(-45:14) -- (-45:16.5)(0:14) -- (0:16.5)(180:14.15) -- (180:16.27)(135:14.15) -- (135:16.27)(-135:14.15) -- (-135:16.27)
(90:19) node {$K_2$}(152.5:20) node {$K_3$}(-152.5:20) node {$K_4$}(-90:19) node {$K_5$}(-22.5:20) node {$K_6$}(22.5:20) node {$K_1$};
\draw[dotted] (-32,0)--(31,0);\draw[<->] (28,2.5)--(28,-2.5);\node at (34,0) {$j$};}
\end{equation*} 
and let $j$ be the reflection
that exchanges $S^1_\top$ and $S^1_\bot$.
For any interval $I$, we abbreviate $D(j): D(I) \ra D(j(I))^\op$ by $j_*$ and let $A:=D(S^1_\top)$.
By definition, $H_0(S,D)=L^2(A)$ with actions
\begin{equation}\label{eq:aaxaa -- BIS}
(a_1\otimes a_2\otimes a_3\otimes a_4\otimes a_5\otimes a_6)\cdot\xi := (a_1a_2a_3)\,\xi\, j_*(a_4a_5a_6)^\op, \qquad a_i\in D(K_i).
\end{equation}
Here $a^\op \in A^\op$ is the element $a \in A$ viewed as an element of $A^\op$.  By~\cite[\corcontragredientbimodule]{BDH(Dualizability+Index-of-subfactors)}, the dual of $H_0$ is its complex conjugate $\overline{H_0(S,D)}=\overline{L^2(A)}$, 
with actions $b\cdot{\bar \xi}\cdot a= \overline{a^* \!\cdot\!\xi\!\cdot\! b^*}$ for 
$a\in D(I_1)\,\bar\otimes\, D(I_3)$ and $b\in (\cala(I_2)\,\bar\otimes\, \calb(I_4))^\op$.
We rewrite it as
\begin{equation}\label{eq:aaxaa1 -- BIS}
\begin{split}
(a_1\otimes a_3\otimes a_4\otimes a_6)\cdot\bar\xi\cdot (a_2^\op&\otimes a_5^\op)=\overline{(a_2^{\op*}\otimes a_5^{\op*})\cdot\xi\cdot (a_1^*\otimes a_3^*\otimes a_4^*\otimes a_6^*)}\\
&=\overline{(a_1^{\op*}\otimes a_2^{\op*}\otimes a_3^{\op*}\otimes a_4^{\op*}\otimes a_5^{\op*}\otimes a_6^{\op*})\cdot\xi}\\
&=\overline{(a_1^{\op*}a_2^{\op*}a_3^{\op*})\,\xi\, j_*(a_4^*a_5^*a_6^*)}
\end{split}
\end{equation}
for $a_i\in D(K_i)$.

On the other hand, $H_0(-S,D):=L^2(D(-S^1_\top))\cong L^2(A^\op)$ has actions 
$(b_1\otimes b_2\otimes b_3\otimes b_4\otimes b_5\otimes b_6)\cdot\eta := (b_1b_2b_3)\,\eta\, j_*(b_4b_5b_6)^\op$ for $b_i\in D(K_i)^\op$ and $\eta\in L^2(A^\op)$.
Using the canonical identification $\eta \mapsto \eta^\op$ between $L^2(A^\op)$ and $L^2(A)$ that exchanges the left $A^\op$-module structure with the right $A$-module structure
and the right $A^\op$-module structure with the left $A$-module structure, this becomes
\begin{equation}\label{eq:aaxaa2 -- BIS}
\quad[(a_1\otimes a_2\otimes a_3\otimes a_4\otimes a_5\otimes a_6)\cdot\xi]^\op = j_*(a_4a_5a_6)\,\xi^\op\,(a_1^\op a_2^\op a_3^\op)
\end{equation}
for $a_i\in D(K_i)$ and $\xi\in L^2(A)$.
Finally, the isomorphism intertwining \eqref{eq:aaxaa1 -- BIS} and \eqref{eq:aaxaa2 -- BIS}
is given by the modular conjugation $J:\overline{L^2(A)}\to L^2(A)$.
\end{proof}

We now investigate what happens when we glue two vacuum sectors along a pair of intervals.
Instead of viewing the vacuum sector $H_0(S,D)$ as being associated to a bicolored circle $S$ as in~\cite[\notationnoncanonvacuumdefects]{BDH(1*1)}, we shall think of it as being associated to a bicolored disk:
\[
\tikzmath[scale=.4]{
\useasboundingbox (-1.7,-2) rectangle (1.7,0);
\draw[ultra thick](0,0) arc (90:-90:1);
\draw(0,0) arc (90:270:1);
\node[scale=.8] at (0,-2.5) {bicolored circle};
}
\quad\equiv\quad
\tikzmath[scale=.4]{
\useasboundingbox (-1.7,-2) rectangle (1.7,0);
\fill[fill = gray!30] (0,-1) circle (1);
\fill[pattern = north east lines] (0,0) arc (90:-90:1) -- cycle; 
\draw[ultra thick](0,0) arc (90:-90:1);
\draw(0,0) arc (90:270:1);
\node[scale=.8] at (0,-2.5) {bicolored disk};
}\vspace{.5cm}
\]
This is merely a change of notation, not of content.  (Note that, as in~\cite{BDH(1*1)}, the Hilbert space $H_0(S,D)$ is only well defined up to non-canonical isomorphism.)

Given two genuinely bicolored disks $\ID_l$, $\ID_r$, we investigate two ways of gluing them together into annulus.
Decompose each of their boundaries into four intervals $S_l:=\partial \ID_l = I_1\cup \ldots \cup I_4$ and $S_r:=\partial \ID_r = I_5\cup\ldots \cup I_8$,
where $I_1$, $I_3$, $I_5$, $I_7$ are genuinely bicolored, $I_4$, $I_6$ are white, and $I_2$, $I_8$ are black.
If we glue $\ID_l$ to $\ID_r$ along diffeomorphisms $I_1\leftrightarrow I_5$ and $I_3\leftrightarrow I_7$, we get the following bicolored annulus:
\begin{equation}\label{eq: bicolored annulus no1}
\tikzmath[scale=.4]{\coordinate (a) at (0,0);\coordinate (b) at (.15,1);\coordinate (c) at (-.2,2);\coordinate (d) at (0,3);\coordinate (e) at (-5,.4);\coordinate (f) at (-5,2.6);\coordinate (g) at (-1,1.2);\coordinate (h) at (-1,2);\begin{scope}[xshift = 90, yshift = 85, rotate= 180]\coordinate (a') at (0,0);\coordinate (b') at (.15,1);\coordinate (c') at (-.2,2);\coordinate (d') at (0,3);\coordinate (e') at (-5,.4);\coordinate (f') at (-5,2.6);\coordinate (g') at (-1,1.2);\coordinate (h') at (-1,2);\end{scope}
\fill[fill = gray!30] (b) to node (I3) [right, xshift = -2] {$\scriptstyle I_3$}(a) to [out = 180, in = -45, looseness=1.1] (e) to [out = -45 + 180, in = 225, looseness=1.1] node[left, xshift = 2] {$\scriptstyle I_2$}(f) to [out = 225 + 180, in = 180, looseness=1.1] node (a1) [pos = .37] {} (d) to [looseness=0] node (I1) [right, xshift = -2] {$\scriptstyle I_1$} (c) to [out = 180, in = 45, looseness=1.1] (h) to [out = 45 + 180, in = -225, looseness=1.1] node[left, xshift = 2] {$\scriptstyle I_4$} (g) to [out = -225 + 180, in = 180, looseness=1.1] (b);
\fill[pattern = north east lines] (a) to [out = 180, in = -45, looseness=1.1] (e) to [out = -45 + 180, in = 225, looseness=1.1] (f) to [out = 225 + 180, in = 180, looseness=1.1]
(d) to [looseness=0] ($(c)!.5!(d)$) to [out = 180, in = 45, looseness=1.1] ($(h)+(-2.2,.1)$) to [out = 45 + 180, in = -225, looseness=1.1]
($(g)+(-2.2,-.3)$) to [out = -225 + 180, in = 180, looseness=1.1] ($(a)!.5!(b)$) -- cycle;
\draw[ultra thick] ($(a)!.5!(b)$) -- (a) to [out = 180, in = -45, looseness=1.1] (e) to [out = -45 + 180, in = 225, looseness=1.1] (f) to [out = 225 + 180, in = 180, looseness=1.1] (d) -- ($(c)!.5!(d)$);
\draw($(c)!.5!(d)$) -- (c) to [out = 180, in = 45, looseness=1.1]  (h) to [out = 45 + 180, in = -225, looseness=1.1] (g) to [out = -225 + 180, in = 180, looseness=1.1] (b)--($(a)!.5!(b)$);
\draw[->, thick] (a1.center) -- ++ (180:0.01);
\fill[fill = gray!30] (b') to node (I5) [left, xshift = 2] {$\scriptstyle I_5$} (a') to [out = 0, in = -45 + 180, looseness=1.1] node (a1') [pos = .6] {} (e') to [out = -45, in = 225 + 180, looseness=1.1] node[right, xshift = -2] {$\scriptstyle I_8$} (f') to [out = 225, in = 0, looseness=1.1] (d') to [looseness=0] node (I7) [left, xshift = 2] {$\scriptstyle I_7$} (c') to [out = 0, in = 45 + 180, looseness=1.1] (h') to [out = 45, in = -225 + 180, looseness=1.1] node[right, xshift = -2] {$\scriptstyle I_6$} (g') to [out = -225, in = 0, looseness=1.1] (b');
\fill[pattern = north east lines] (a') to [out = 0, in = -45+180, looseness=1.1] (e') to [out = -45, in = 225+180, looseness=1.1] (f') to [out = 225, in = 0, looseness=1.1]
(d') to [looseness=0] ($(c')!.5!(d')$) to [out = 0, in = 45+180, looseness=1.1] ($(h')+(2.2,-.1)$) to [out = 45, in = -225+180, looseness=1.1]
($(g')+(2.2,.3)$) to [out = -225, in = 0, looseness=1.1] ($(a')!.5!(b')$) -- cycle;
\draw[ultra thick] ($(a')!.5!(b')$) -- (a') to [out = 0, in = -45+180, looseness=1.1] (e') to [out = -45, in = 225+180, looseness=1.1] (f') to [out = 225, in = 0, looseness=1.1] (d') -- ($(c')!.5!(d')$);
\draw($(c')!.5!(d')$) -- (c') to [out = 0, in = 45+180, looseness=1.1]  (h') to [out = 45, in = -225+180, looseness=1.1] (g') to [out = -225, in = 0, looseness=1.1] (b')--($(a')!.5!(b')$);
\draw[->, thick] (a1'.center) -- ++ (180:0.01); \node at (-3.5,-1.2) {$\scriptstyle \ID_l$}; \node at (7,-1.2)  {$\scriptstyle \ID_r$}; \draw (I1) edge [<->,bend left=35] (I5); \draw (I3) edge [<->,bend right=35] (I7);
} 
\,\rightsquigarrow\,\,\,
\tikzmath[scale=.35]{\coordinate (a) at (0,0);\coordinate (b) at (.15,1);\coordinate (c) at (-.2,2);\coordinate (d) at (0,3);\coordinate (e) at (-5,.4);\coordinate (f) at (-5,2.6);\coordinate (g) at (-1,1.2);\coordinate (h) at (-1,2);\coordinate (a') at (d);\coordinate (b') at (c);\coordinate (c') at (b);\coordinate (d') at (a);\begin{scope}[yshift = 85, rotate= 180]\coordinate (e') at (-5,.4);\coordinate (f') at (-5,2.6);\coordinate (g') at (-1,1.2);\coordinate (h') at (-1,2);\end{scope}
\fill[fill = gray!30] (a) to [out = 180, in = -45, looseness=1.1] (e) to [out = -45 + 180, in = 225, looseness=1.1] (f) to [out = 225 + 180, in = 180, looseness=1.1] node (a1) [pos = .37] {} (d)
to [out = 0, in = -45+180, looseness=1.1] (e') to [out = -45, in = 225+180, looseness=1.1] (f') to [out = 225, in = 0, looseness=1.1] (d')
(c) to [out = 180, in = 45, looseness=1.1]  (h) to [out = 45 + 180, in = -225, looseness=1.1] (g) to [out = -225 + 180, in = 180, looseness=1.1] (b)
to [out = 0, in = 45+180, looseness=1.1]  (h') to [out = 45, in = -225+180, looseness=1.1] (g') to [out = -225, in = 0, looseness=1.1] node (b1) [pos = .37] {} (b');
\fill[pattern = north east lines] 
(a) to [out = 180, in = -45, looseness=1.1] (e) to [out = -45 + 180, in = 225, looseness=1.1] (f) to [out = 225 + 180, in = 180, looseness=1.1] node (a1) [pos = .37] {} (d)
to [out = 0, in = -45+180, looseness=1.1] (e') to [out = -45, in = 225+180, looseness=1.1] (f') to [out = 225, in = 0, looseness=1.1] (d')
($(c)!.5!(d)$) to [out = 180, in = 45, looseness=1.1]  ($(h)+(-2.2,.1)$) to [out = 45 + 180, in = -225, looseness=1.1] ($(g)+(-2.2,-.3)$) to [out = -225 + 180, in = 180, looseness=1.1] ($(a)!.5!(b)$)
to [out = 0, in = 45+180, looseness=1.1]  ($(h')+(2.2,-.1)$) to [out = 45, in = -225+180, looseness=1.1] ($(g')+(2.2,.3)$) to [out = -225, in = 0, looseness=1.1] ($(a')!.5!(b')$);
\draw[->, thick] (a1.center) -- ++ (180:0.01);
\draw[ultra thick] (a) to [out = 180, in = -45, looseness=1.1] (e) to [out = -45 + 180, in = 225, looseness=1.1] (f) to [out = 225 + 180, in = 180, looseness=1.1] (d)
to [out = 0, in = -45+180, looseness=1.1] (e') to [out = -45, in = 225+180, looseness=1.1] (f') to [out = 225, in = 0, looseness=1.1] (d');
\draw(c) to [out = 180, in = 45, looseness=1.1]  (h) to [out = 45 + 180, in = -225, looseness=1.1] (g) to [out = -225 + 180, in = 180, looseness=1.1] (b)
to [out = 0, in = 45+180, looseness=1.1]  (h') to [out = 45, in = -225+180, looseness=1.1] (g') to [out = -225, in = 0, looseness=1.1] (b');
\draw[->] (b1.center) -- ++ (0:0.01); \node at (2,-1.2)  {$\scriptstyle \ID_l\cup \ID_r$};
} 
\end{equation}
If $D$ is a finite defect, then the action of $D(I_1)\otimes_\alg D(I_3)$ on $H_0(S_l,D)$ extends to the spatial tensor product $D(I_1)\,\bar\otimes\, D(I_3)$.
Similarly, the action of $D(I_5)\otimes_\alg D(I_7)$ on $H_0(S_r,D)$ extends to $D(I_5)\,\bar\otimes\, D(I_7)$.
Identifying $D(I_5)\bar\otimes D(I_7)$ with $D(I_1)\bar\otimes D(I_3)^\op$ via the diffeomorphism, we can then associate a Hilbert space to the annulus \eqref{eq: bicolored annulus no1} as follows:
\[
H_0(S_l,D)\!\underset{D(I_5)\bar\otimes D(I_7)}\boxtimes\! H_0(S_r,D)\,\,\,=\,\,\,
\tikzmath{
\node (a) at (0,0) {$
H_0(S_l,D)\underset{D(I_5)}\boxtimes H_0(S_r,D)\underset{D(I_3)}\boxtimes$};
\def\dd{.5}\def\ll{.35}\def\rr{.25}
\draw[dashed, rounded corners = 6] (a.east)++(0,.1) -- ++(\rr,0) -- ++(0,-\dd) -- ($(a.west) + (-\ll,-\dd) + (0,.1)$) -- ++(0,\dd) -- ++(\rr,0);} 
\]

Consider now the slightly different situation where $I_2$, $I_4$, $I_6$, $I_8$ are genuinely bicolored, $I_1$, $I_5$ are white, and  $I_3$, $I_7$ are black.
Once again, we glue $\ID_l$ to $\ID_r$ along two diffeomorphisms $I_1\leftrightarrow I_5$ and $I_3\leftrightarrow I_7$
\begin{equation}\label{eq: bicolored annulus no2}
\tikzmath[scale=.4]{\coordinate (a) at (0,0);\coordinate (b) at (.15,1);\coordinate (c) at (-.2,2);\coordinate (d) at (0,3);\coordinate (e) at (-5,.4);\coordinate (f) at (-5,2.6);\coordinate (g) at (-1,1.2);\coordinate (h) at (-1,2);\begin{scope}[xshift = 90, yshift = 85, rotate= 180]\coordinate (a') at (0,0);\coordinate (b') at (.15,1);\coordinate (c') at (-.2,2);\coordinate (d') at (0,3);\coordinate (e') at (-5,.4);\coordinate (f') at (-5,2.6);\coordinate (g') at (-1,1.2);\coordinate (h') at (-1,2);\end{scope}
\fill[fill = gray!30] (b) to node (I3) [right, xshift = -2] {$\scriptstyle I_3$}(a) to [out = 180, in = -45, looseness=1.1] (e) to [out = -45 + 180, in = 225, looseness=1.1] node[left, xshift = 2] {$\scriptstyle I_2$}(f) to [out = 225 + 180, in = 180, looseness=1.1] node (a1) [pos = .37] {}(d) to [looseness=0] node (I1) [right, xshift = -2] {$\scriptstyle I_1$}(c) to [out = 180, in = 45, looseness=1.1] (h) to [out = 45 + 180, in = -225, looseness=1.1] node[right, xshift = -2] {$\scriptstyle I_4$}(g) to [out = -225 + 180, in = 180, looseness=1.1] (b);
\draw(-5.5,1.5) to [out = 90, in = 225, looseness=1] (f) to [out = 225 + 180, in = 180, looseness=1.1] (d) -- (c) to [out = 180, in = 45, looseness=1.1]  (h) 
to [out = 45 + 180, in = 90, looseness=1] (-1.18,1.55);
\filldraw[ultra thick, pattern = north east lines] (-1.18,1.55) to [out = -90, in = -225, looseness=1] (g)  to [out = -225 + 180, in = 180, looseness=1.1] (b) -- (a) to [out = 180, in = -45, looseness=1.1] (e)
to [out = -45 + 180, in = -90, looseness=1] (-5.5,1.5); \draw[->] (a1.center) -- ++ (180:0.01);
\fill[fill = gray!30] (b') to node (I5) [left, xshift = 2] {$\scriptstyle I_5$}(a') to [out = 0, in = -45 + 180, looseness=1.1] node (a1') [pos = .6] {}(e') to [out = -45, in = 225 + 180, looseness=1.1] node[right, xshift = -2] {$\scriptstyle I_8$}(f') to [out = 225, in = 0, looseness=1.1](d') to [looseness=0] node (I7) [left, xshift = 2] {$\scriptstyle I_7$}(c') to [out = 0, in = 45 + 180, looseness=1.1] (h') to [out = 45, in = -225 + 180, looseness=1.1] node[left, xshift = 2] {$\scriptstyle I_6$}(g') to [out = -225, in = 0, looseness=1.1] (b');
\draw (4.35,1.45) to [out = 90, in = -225+180, looseness=1] (g')  to [out = -225, in = 0, looseness=1.1] (b') -- (a') to [out = 0, in = -45+180, looseness=1.1] (e')
to [out = -45, in = 90, looseness=1] (8.67,1.5);\filldraw[ultra thick, pattern = north east lines] (8.67,1.5) to [out = 90+180, in = 225+180, looseness=1] (f') to [out = 225, in = 0, looseness=1.1] (d') -- (c') to [out = 0, in = 45+180, looseness=1.1]  (h') to [out = 45, in = -90, looseness=1] (4.35,1.45); \draw[->] (a1'.center) -- ++ (180:0.01);
\node at (-3.5,-1.2)  {$\scriptstyle \ID_l$};\node at (7,-1.2)  {$\scriptstyle \ID_r$};\draw (I1) edge [<->,bend left=35] (I5);\draw (I3) edge [<->,bend right=35] (I7);} 
\,
\rightsquigarrow\,\,
\,
\tikzmath[scale=.35]{\coordinate (a) at (0,0);\coordinate (b) at (.15,1);\coordinate (c) at (-.2,2);\coordinate (d) at (0,3);\coordinate (e) at (-5,.4);\coordinate (f) at (-5,2.6);\coordinate (g) at (-1,1.2);\coordinate (h) at (-1,2);\coordinate (a') at (d);\coordinate (b') at (c);\coordinate (c') at (b);\coordinate (d') at (a);\begin{scope}[yshift = 85, rotate= 180]\coordinate (e') at (-5,.4);\coordinate (f') at (-5,2.6);\coordinate (g') at (-1,1.2);\coordinate (h') at (-1,2);\end{scope}
\fill[fill = gray!30] (a) to [out = 180, in = -45, looseness=1.1] (e) to [out = -45 + 180, in = 225, looseness=1.1] (f) to [out = 225 + 180, in = 180, looseness=1.1] node (a1) [pos = .37] {} (d) to [out = 0, in = -45+180, looseness=1.1] (e') to [out = -45, in = 225+180, looseness=1.1] (f') to [out = 225, in = 0, looseness=1.1] (d') (c) to [out = 180, in = 45, looseness=1.1]  (h) to [out = 45 + 180, in = -225, looseness=1.1] (g) to [out = -225 + 180, in = 180, looseness=1.1] (b) to [out = 0, in = 45+180, looseness=1.1]  (h') to [out = 45, in = -225+180, looseness=1.1] (g') to [out = -225, in = 0, looseness=1.1] node (b1) [pos = .37] {} (b');
\draw[ultra thick] (5.5,1.5) to [out = 90+180, in = 225+180, looseness=1] (f') to [out = 225, in = 0, looseness=1.1] (d') (a) to [out = 180, in = -45, looseness=1.1] (e) to [out = -45 + 180, in = -90, looseness=1] (-5.5,1.5) (-1.18,1.55) to [out = -90, in = -225, looseness=1] (g)  to [out = -225 + 180, in = 180, looseness=1.1] (c') to [out = 0, in = 45+180, looseness=1.1]  (h') to [out = 45, in = -90, looseness=1] (1.18,1.45);
\fill[pattern = north east lines](5.5,1.5) to [out = 90+180, in = 225+180, looseness=1] (f') to [out = 225, in = 0, looseness=1.1] (d') -- (a) to [out = 180, in = -45, looseness=1.1] (e) to [out = -45 + 180, in = -90, looseness=1] (-5.5,1.5) -- (-1.18,1.55) to [out = -90, in = -225, looseness=1] (g)  to [out = -225 + 180, in = 180, looseness=1.1] (c') to [out = 0, in = 45+180, looseness=1.1]  (h') to [out = 45, in = -90, looseness=1] (1.18,1.45) -- cycle;\draw[->] (a1.center) -- ++ (180:0.01);
\draw (a) to [out = 180, in = -45, looseness=1.1] (e) to [out = -45 + 180, in = 225, looseness=1.1] (f) to [out = 225 + 180, in = 180, looseness=1.1] (d)
to [out = 0, in = -45+180, looseness=1.1] (e') to [out = -45, in = 225+180, looseness=1.1] (f') to [out = 225, in = 0, looseness=1.1] (d');
\draw(c) to [out = 180, in = 45, looseness=1.1]  (h) to [out = 45 + 180, in = -225, looseness=1.1] (g) to [out = -225 + 180, in = 180, looseness=1.1] (b)
to [out = 0, in = 45+180, looseness=1.1]  (h') to [out = 45, in = -225+180, looseness=1.1] (g') to [out = -225, in = 0, looseness=1.1] (b');
\draw[->] (b1.center) -- ++ (0:0.01); \node at (2,-1.2)  {$\scriptstyle \ID_l\cup \ID_r$}; } 
\end{equation}
and we associate a Hilbert space to the annulus: 
\[
H_0(S_l,D)\!\underset{\cala(I_5)\bar\otimes \calb(I_7)}\boxtimes\! H_0(S_r,D)\,\,\,=\,\,\,
\tikzmath{
\node (a) at (0,0) {$
H_0(S_l,D)\underset{\cala(I_5)}\boxtimes H_0(S_r,D)\underset{\calb(I_3)}\boxtimes$};
\def\dd{.5}\def\ll{.35}\def\rr{.25}
\draw[dashed, rounded corners = 6] (a.east)++(0,.1) -- ++(\rr,0) -- ++(0,-\dd) -- ($(a.west) + (-\ll,-\dd) + (0,.1)$) -- ++(0,\dd) -- ++(\rr,0);} 
\]

\begin{lemma}\label{lem: H_0 otimes H_0   <  H_Ann  --  with defects}
  Let ${}_\cala D_\calb$ be a finite irreducible defect, and let $S_l$, $S_r$, $I_1,\ldots, I_8$ be either as in \eqref{eq: bicolored annulus no1} or as in \eqref{eq: bicolored annulus no2}.
  Let also $S_b:=I_2\cup I_8$ and $S_m:=I_4\cup I_6$.
  Then $H_0(S_m,D) \otimes H_0(S_b,D)$ is a direct summand of 
  \[
  H_{\mathit{ann}}\,:=\,\,\,
  \tikzmath{
  \node (a) at (0,0) {$
  H_0(S_l,D)\underset{D(I_5)}\boxtimes H_0(S_r,D)\underset{D(I_3)}\boxtimes$};
  \def\dd{.5}\def\ll{.35}\def\rr{.25}
  \draw[dashed, rounded corners = 6] (a.east)++(0,.1) -- ++(\rr,0) -- ++(0,-\dd) -- ($(a.west) + (-\ll,-\dd) + (0,.1)$) -- ++(0,\dd) -- ++(\rr,0);} 
  \smallskip\]
  in a way compatible with the actions of $D(J)$ for all $J\subset S_b$ and $J\subset S_m$.
  Moreover, $H_0(S_m,D) \otimes H_0(S_b,D)$ appears with multiplicity $1$ inside $H_{\mathit{ann}}$.  (In the case of situation~\eqref{eq: bicolored annulus no1}, by definition $H_0(S_m,D) = H_0(S_m,\cala)$ and $H_0(S_b,D) = H_0(S_b,\calb)$; in this case, we also require that $\cala$ and $\calb$ be irreducible.)
\end{lemma}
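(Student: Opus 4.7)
The plan is to exhibit an explicit cyclic and separating vector in $H_{\mathit{ann}}$ for the action of the algebra attached to the two free boundary circles, and then apply Tomita--Takesaki theory to recognize the resulting sub-bimodule as $H_0(S_m,D) \otimes H_0(S_b,D)$.

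First, realize the two vacuum sectors as $L^2$-spaces of a suitable half-circle algebra as in the proof of Lemma~\ref{lem: H_0(-S) -- for defects}, and pick cyclic separating vectors $\Omega_l \in H_0(S_l,D)$ and $\Omega_r \in H_0(S_r,D)$. By the definition of Connes fusion, these assemble into a well-defined vector $\Omega := \Omega_l \boxtimes \Omega_r \in H_{\mathit{ann}}$. After gluing, the intervals $I_2, I_4, I_6, I_8$ sit as disjoint subintervals of $S_b \cup S_m$, so by locality the algebras $D(I_j)$ pairwise commute, and the finiteness of $D$ (together with the finiteness of $\cala$ and $\calb$ in case~\eqref{eq: bicolored annulus no1}) ensures the joint action extends to a normal action of $D(S_m) \,\bar\otimes\, D(S_b)$ on $H_{\mathit{ann}}$, compatible with the $D(J)$-action for each subinterval $J \subset S_b \cup S_m$.

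Next, verify that $\Omega$ is cyclic and separating for this joint action. The separating property follows from the self-duality $\overline{H_0(S_l,D)} \cong H_0(-S_l, D)$ of Lemma~\ref{lem: H_0(-S) -- for defects}, which produces a nondegenerate pairing between $H_{\mathit{ann}}$ and the analogously constructed annular Hilbert space for the reversed orientation, identifying the commutant of $D(S_m) \,\bar\otimes\, D(S_b)$ with the corresponding algebra on the reversed annulus. Cyclicity is a Reeh--Schlieder-type statement: one shows $\overline{(D(S_m) \,\bar\otimes\, D(S_b)) \cdot \Omega}$ exhausts the relevant subspace, using that the vacuum state on the full gluing algebra restricts to a faithful normal state on $D(S_m) \,\bar\otimes\, D(S_b)$ and that this restriction arises from a conditional expectation provided by the dualizability of the bimodule~\eqref{eq: h0D is dualizable}.

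Given cyclicity and separation, the GNS construction canonically identifies $\overline{(D(S_m) \,\bar\otimes\, D(S_b)) \cdot \Omega}$ with $L^2(D(S_m) \,\bar\otimes\, D(S_b)) \cong H_0(S_m,D) \otimes H_0(S_b,D)$, and the orthogonal projection onto this subspace realizes the desired direct summand, compatibly with the $D(J)$-actions for all $J \subset S_b \cup S_m$. Multiplicity one is immediate from the irreducibility of the vacuum sector of $D(S_m) \,\bar\otimes\, D(S_b)$, which follows from the irreducibility of $D$ (and of $\cala, \calb$ in case~\eqref{eq: bicolored annulus no1}). I expect the main obstacle to be the cyclicity step: adapting the Reeh--Schlieder argument to the defect setting requires genuinely combining conformal symmetries with the finite-index structure of $D$, in the form of the conditional expectation that the dualizability of~\eqref{eq: h0D is dualizable} produces.
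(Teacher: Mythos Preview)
Your approach has genuine gaps that prevent it from going through.

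\textbf{The cyclicity step is not justified.} You acknowledge this is the main obstacle, but the sketch you offer does not work. A Reeh--Schlieder argument needs an analytic continuation mechanism (positive energy, conformal covariance) acting on a single vacuum representation; here you are inside a cyclic Connes fusion $H_{\mathit{ann}}$, and no such mechanism is available for the algebras $D(I_2),D(I_4),D(I_6),D(I_8)$ acting there. The conditional expectation coming from dualizability of~\eqref{eq: h0D is dualizable} does not by itself produce cyclicity of $\Omega_l\boxtimes\Omega_r$ for the boundary algebras.

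\textbf{The cyclic subspace is not obviously a sub-sector.} Suppose you take the closure of $(D(I_2)\,\bar\otimes\,D(I_4))\cdot\Omega$. For this to be a summand compatible with all $D(J)$, $J\subset S_m\cup S_b$, you need invariance under $D(I_6)$ and $D(I_8)$ as well. These commute with $D(I_2)\,\bar\otimes\,D(I_4)$, but commuting operators do not preserve cyclic subspaces in general. You never address this.

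\textbf{Multiplicity one does not follow.} Even if every step worked, your construction exhibits \emph{one} nonzero element of $\hom(H_m\otimes H_b,\,H_{\mathit{ann}})$. Irreducibility of $H_m\otimes H_b$ then gives you a summand, but says nothing about whether further copies occur. Multiplicity one is an honest dimension count, not a consequence of irreducibility alone.

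The paper's proof avoids all of this by computing the hom space directly. Set $A=D(I_2)\,\bar\otimes\,D(I_4)$, $B=D(I_5)\,\bar\otimes\,D(I_7)$, $C=(D(I_6)\,\bar\otimes\,D(I_8))^{\op}$. Since ${}_AH_l{}_B$ is dualizable with dual $\check H_l=H_0(-S_l,D)$ (Lemma~\ref{lem: H_0(-S) -- for defects}), Frobenius reciprocity gives
\[
\hom_{A,C}\big(H_m\otimes H_b,\,H_l\boxtimes_B H_r\big)\;\cong\;\hom_{B,C}\big(\check H_l\boxtimes_A(H_m\otimes H_b),\,H_r\big).
\]
Now $\check H_l\boxtimes_A(H_m\otimes H_b)\cong H_0(S_r,D)$ by the gluing lemmas, so the right side is $\hom_{B,C}(H_r,H_r)=\IC$ by irreducibility of $D$. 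This single computation yields both existence of the summand and multiplicity one, with no appeal to Reeh--Schlieder or explicit vectors.
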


\begin{proof}
Let 
$A:=D(I_2)\,\bar\otimes\,D(I_4)$, $B:=(D(I_1)\,\bar\otimes\, D(I_3))^\op\cong D(I_5)\,\bar\otimes\, D(I_7)$, and $C:=(D(I_6)\,\bar\otimes\,D(I_8))^\op$,
and let us abbreviate
\[
H_l:=H_0(S_l,D),\quad H_r:=H_0(S_r,D),\quad H_b:=H_0(S_b,D),\quad H_m:=H_0(S_m,D).
\]
Since $I_2$, $I_4$, $I_6$, $I_8$ cover $S_m\cup S_b$ and $D$ is (and if needed $\cala$ and $\calb$ are) irreducible, the Hilbert space $H_m\otimes H_b$ is an irreducible $A$-$C$-bimodule.
We need to show that
\begin{equation}\label{eq: hom_A,C}
\hom_{A,C}(H_m\otimes H_b, H_{\mathit{ann}})=
\hom_{A,C}(H_m\otimes H_b, H_l\boxtimes_B H_r)
\end{equation}
is one dimensional.

Since $D$ is a finite defect, the bimodule ${}_A H_{lB}$ is dualizable. 
By Lemma \ref{lem: H_0(-S) -- for defects}, its dual is then $\check H_l:=H_0(-S_l)$.
By the fundamental property of duals (Frobenius reciprocity), we can therefore rewrite \eqref{eq: hom_A,C} as
\[
\hom_{B,C}\big(\check H_l\boxtimes_A (H_m\otimes H_b), H_r\big).
\]
By~\cite[\lemHKK]{BDH(modularity)} and~\cite[\lemvacvacvacdefects]{BDH(1*1)} $\check H_l\boxtimes_A (H_m\otimes H_b)$ is isomorphic to $H_0(S_r)$.
The above expression therefore reduces to $\hom_{B,C}(H_r, H_r)$, which is one dimensional by the irreducibility of the defect $D$.
\end{proof}

\subsection{A Peter--Weyl theorem for defects}

We now prove that there are finitely many isomorphism classes of irreducible $D$-$D$-sectors (also referred to simply as `$D$-sectors') for a finite defect $D$, and that every such irreducible sector is finite.  This is the analog for sectors between defects of the corresponding fact for representations of conformal nets, and the proof follows the structure of the proof for nets~\cite[Thm 3.14]{BDH(nets)}.

Let $S$ be a bicolored circle. Recall that an $S$-sector of $D$ is a Hilbert space equipped with actions of the algebras $D(I)$ for all bicolored subintervals $I$ of $S$, subject to the conditions~\eqref{eq:  orange star}.  As in~\cite[\S1.B]{BDH(nets)}, given a $D$-sector $K$ (on the standard bicolored circle) and a bicolored circle $S$, we write $K(S)$ for the $S$-sector $\varphi^*K$, where $\varphi:S\to S^1$ is any bicolored diffeomorphism from $S$ to the standard circle. 
This sector is well defined up to non-canonical isomorphism, by the same argument as in the proof of ~\cite[Prop.\,1.14]{BDH(nets)}.

\begin{theorem}     \label{thm: KLM -- all irreducible sectors are finite : with defects}
Let ${}_\cala D_\calb$ be a finite irreducible defect between finite conformal nets. 
Then all $D$-sectors are direct sums of irreducible ones, and all irreducible $D$-sectors are finite.
Moreover, there are only finitely many isomorphism classes of irreducible $D$-sectors.
\end{theorem}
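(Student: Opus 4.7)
Plan: My strategy is to adapt the argument of~\cite[Thm 3.14]{BDH(nets)} from the net setting to the defect setting, using the annular Hilbert space $H_{\mathit{ann}}$ of Lemma~\ref{lem: H_0 otimes H_0   <  H_Ann  --  with defects} as the universal container of irreducible $D$-sectors.  Write $A := D(S^1_\top)$ and $B := D(S^1_\bot)^{\op}$, so that a $D$-sector on the standard bicolored circle is an $A$-$B$-bimodule.

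The first step is to show that $H_{\mathit{ann}}$ is itself dualizable as an $A$-$B$-bimodule.  This follows because $H_{\mathit{ann}}$ is obtained as an iterated Connes fusion of the two vacuum sectors $H_0(S_l,D)$ and $H_0(S_r,D)$, each of which is dualizable by~\eqref{eq: h0D is dualizable}, and dualizability is preserved under fusion.  A dualizable bimodule between finite direct sums of type III factors is a finite direct sum of irreducible bimodules, each of which remains dualizable.  Consequently $H_{\mathit{ann}}$ decomposes as a finite direct sum of irreducible, dualizable (that is, \emph{finite}) $D$-sectors.

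The heart of the argument is to show that every irreducible $D$-sector $H_\lambda$ occurs as a summand of $H_{\mathit{ann}}$.  I would establish this by a doubly-twisted version of Lemma~\ref{lem: H_0 otimes H_0   <  H_Ann  --  with defects}: replace each of the two vacuum disks in the gluing construction by a disk carrying the sector $H_\lambda$ (respectively $H_{\bar\lambda}$) along its boundary circle, and rerun the Frobenius-reciprocity computation from the proof of that lemma.  The dualizability of the vacua (Lemma~\ref{lem: H_0(-S) -- for defects}) together with the vacuum-fusion identities of~\cite[\lemHKK]{BDH(modularity)} and~\cite[\lemvacvacvacdefects]{BDH(1*1)} should reduce the intertwiner space $\hom_{A,B}(H_\lambda \otimes H_{\bar\lambda},\, H_{\mathit{ann}})$ to $\hom(H_\lambda, H_\lambda)$, which is one-dimensional by irreducibility of $H_\lambda$.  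Hence each $H_\lambda$ is a (multiplicity-one) summand of the dualizable bimodule $H_{\mathit{ann}}$ and is therefore finite, and the set $\Delta_D$ injects into the finite spectrum of $H_{\mathit{ann}}$, giving $|\Delta_D|<\infty$.  Semisimplicity of an arbitrary $D$-sector then follows from the finiteness of $\Delta_D$ by realizing any such sector as a subrepresentation of an amplification of $H_{\mathit{ann}}$ and collapsing its direct-integral decomposition over the finite spectrum $\Delta_D$ to a direct sum.

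The main obstacle I anticipate is the twisted-annular gluing step: one must first extend Lemma~\ref{lem: H_0(-S) -- for defects} to identify the dual of a sector-decorated disk Hilbert space with the corresponding $H_{\bar\lambda}$-decorated disk, and then carefully track the bicoloring data $I_1,\ldots,I_8$ through the doubled gluing to confirm that $H_\lambda\otimes H_{\bar\lambda}$ enters with the correct $A$-$B$-bimodule structure matching the decomposition $S_m \cup S_b$ and with multiplicity exactly one.
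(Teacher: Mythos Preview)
Your overall strategy---use $H_{\mathit{ann}}$ as a universal container and show every irreducible sits inside it---is the paper's strategy too, but the mechanism you propose for the embedding step is circular, and that is the heart of the matter.

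First a bookkeeping issue: your algebras $A=D(S^1_\top)$ and $B=D(S^1_\bot)^{\op}$ live on a single bicolored circle, whereas $H_{\mathit{ann}}$ has boundary $S_m\sqcup S_b$. The dualizability one actually obtains by fusing the two vacua is as a $D(I_2\cup I_4)$--$D(I_6\cup I_8)^{\op}$-bimodule, with the algebras sitting on the \emph{left} and \emph{right} halves of the annulus; restricting that structure to your $A$--$B$ (acting only on $S_b$) does not obviously preserve dualizability.

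The real gap is in your Frobenius-reciprocity step. Using dualizability of the \emph{vacuum} $H_l$, one can indeed reduce $\hom(H_\lambda\otimes H_{\bar\lambda},H_{\mathit{ann}})$ to $\hom(K,H_0)$ with $K$ the vertical fusion $H_\lambda\boxtimes H_{\bar\lambda}$. But to evaluate this last hom you need either the evaluation map $H_\lambda\boxtimes\overline{H_\lambda}\to H_0$ or one further Frobenius move---and both require $H_\lambda$ itself to be dualizable, which is precisely what you are trying to prove. Your proposed extension of Lemma~\ref{lem: H_0(-S) -- for defects} to sector-decorated disks has the same problem: the dual of $H_\lambda$ is identified with $H_{\bar\lambda}$ only \emph{after} one knows $H_\lambda$ is dualizable.

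The paper avoids this circularity by a softer argument. For an \emph{arbitrary} factorial sector $K$ (no dualizability assumed), the vacuum-fusion identities give $K(S_2)\boxtimes_{A_l}H_{\mathit{ann}}\cong H_{\mathit{ann}}\boxtimes_{A_m}K(S_4)$; combined with Lemma~\ref{lem: H_0 otimes H_0   <  H_Ann  --  with defects} and the fact that any faithful module over a factor is stably isomorphic to $L^2$, this yields an embedding $K(S_b)\otimes\ell^2\hookrightarrow H_{\mathit{ann}}\otimes\ell^2$. Hence there are only finitely many stable-isomorphism classes of factorial sectors. The disintegration result (Lemma~\ref{lem:disintegratingdefects}) then forces every factorial sector to be type~I---otherwise there would be uncountably many irreducibles---and this is also what delivers the direct-sum decomposition of an arbitrary sector, a step your sketch elides. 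Only after the type~I conclusion does one write $H_{\mathit{ann}}\cong\bigoplus L_i\otimes M_i$ and read off finiteness of each $L_i$ from dualizability of $H_{\mathit{ann}}$ over the split tensor-product algebras. The Frobenius-reciprocity calculation you outline is exactly how the paper proves Theorem~\ref{thm:KLM}, \emph{after} the present theorem is in hand.
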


\begin{proof}
Let $S_l$, $S_r$, $S_b$, $S_m$ and $I_1,I_2,\ldots,I_8$ be as follows:
\begin{equation}\label{eq(1.15)}
\tikzmath[scale=.5]{\coordinate (a) at (0,0);\coordinate (b) at (.15,1);\coordinate (c) at (-.2,2);\coordinate (d) at (0,3);\coordinate (e) at (-5,.4);\coordinate (f) at (-5,2.6);\coordinate (g) at (-1,1.2);\coordinate (h) at (-1,2);\coordinate (a') at (d);\coordinate (b') at (c);\coordinate (c') at (b);\coordinate (d') at (a);\begin{scope}[yshift = 85, rotate= 180]\coordinate (e') at (-5,.4);\coordinate (f') at (-5,2.6);\coordinate (g') at (-1,1.2);\coordinate (h') at (-1,2);\end{scope}
\draw (b) to node (I3) [left, xshift = 2] {$\scriptscriptstyle I_3$} node (I7) [right, xshift = -2] {$\scriptscriptstyle I_7$} (a) to [out = 180, in = -45, looseness=1.1] (e) 
	to [out = -45 + 180, in = 225, looseness=1.1] node[left, xshift = 2] {$\scriptscriptstyle I_2$} (f) to [out = 225 + 180, in = 180, looseness=1.1] node (a1) [pos = .37] {}
	(d) to [looseness=0] node (I1) [left, xshift = 2] {$\scriptscriptstyle I_1$}node (I5) [right, xshift = -2] {$\scriptscriptstyle I_5$} (c) to [out = 180, in = 45, looseness=1.1] 
	(h) to [out = 45 + 180, in = -225, looseness=1.1] node[left, xshift = 2] {$\scriptscriptstyle I_4$} (g) to [out = -225 + 180, in = 180, looseness=1.1] (b);
\draw (a') to [out = 0, in = -45 + 180, looseness=1.1]node (a1') [pos = .6] {} (e') to [out = -45, in = 225 + 180, looseness=1.1] node[right, xshift = -2] {$\scriptscriptstyle I_8$} 
	(f') to [out = 225, in = 0, looseness=1.1] (d') (c') to [out = 0, in = 45 + 180, looseness=1.1] (h') to [out = 45, in = -225 + 180, looseness=1.1] node[right, xshift = -2] {$\scriptscriptstyle I_6$}
	(g') to [out = -225, in = 0, looseness=1.1] node (b1) [pos = .37] {} (b');
\draw[line width=2] (5.5,1.5) to [out = 90+180, in = 225+180, looseness=1] (f') to [out = 225, in = 0, looseness=1.1] (d') (a) to [out = 180, in = -45, looseness=1.1] (e) to [out = -45 + 180, in = -90, looseness=1] (-5.5,1.5) (-1.18,1.55) to [out = -90, in = -225, looseness=1] (g)  to [out = -225 + 180, in = 180, looseness=1.1] (c') to [out = 0, in = 45+180, looseness=1.1]  (h') to [out = 45, in = -90, looseness=1] (1.18,1.45) (a) -- (b);
\node at (-3.5,1.8) {$S_l$};
\node at (3.5,1.8) {$S_r$};
\node at (0,1.5) {$S_m$};
\node at (1,3.6) {$S_b$};
\draw[->] (a1.center) -- ++ (180:0.01); \draw[->] (b1.center) -- ++ (0:0.01);
} 
\end{equation}
and let $H_l=H_0(S_l,D)$, $H_r=H_0(S_r,D)$, $H_b=H_0(S_b,D)$, $H_m=H_0(S_m,D)$, and
$H_{\mathit{ann}}:=\,\,\tikzmath{
\node (a) at (0,0) {$
H_l\,\boxtimes_{\cala(I_5)} H_r\,\boxtimes_{\calb(I_3)}$};
\def\dd{.45}\def\ll{.25}\def\rr{.15}
\draw[dashed, rounded corners = 5] (a.east)++(0,.1) -- ++(\rr,0) -- ++(0,-\dd) -- ($(a.west) + (-\ll,-\dd) + (0,.1)$) -- ++(0,\dd) -- ++(\rr,0);} 
\smallskip$\,.
Let also
\begin{gather*}
A:=D(I_2\cup I_4),\quad B:=(\cala(I_1)\,\bar\otimes\,\calb(I_3))^\op\cong\cala(I_5)\,\bar\otimes\,\calb(I_7),\quad C:=D(I_6\cup I_8)^\op,
\\
A_l:=D(I_2),\quad A_m:=D(I_4)^\op,\quad C_m:=D(I_6)^\op,\quad C_r:=D(I_8).
\end{gather*}
Since ${}_AH_l{}_B$ and ${}_BH_r{}_C$ are dualizable bimodules,
$H_{\mathit{ann}}=H_l\boxtimes_B H_r$ is dualizable as an $A$-$C$-bimodule.
It therefore splits into finitely many irreducible summands~\cite[Lemma 4.10]{BDH(Dualizability+Index-of-subfactors)}.

Let us now consider $H_{\mathit{ann}}$ with its actions of $D(I)$ for $I\in \INT_{\circ\bullet}^{S_b}$. 
The von Neumann algebra generated by those algebras on $H_{\mathit{ann}}$ has a finite dimensional center,
since otherwise would contradict the fact that ${}_A H_{\mathit{ann}}{}_C$ splits into finitely many irreducible summands.
We can thus write $H_{\mathit{ann}}$ as a direct sum of finitely many 
factorial $S_b$-sectors of $D$:
\begin{equation}\label{eq: decomposition of Hann}
H_{\mathit{ann}} = K_1(S_b)\oplus\ldots\oplus K_n(S_b).
\end{equation}
Here $K_1, \ldots, K_n$ are $D$-sectors, which we transfer to $S_b$ by means of an arbitrary diffeomorphism $S^1\cong S_b$.  (As in the situation without defects~\cite[Sect 3.2]{BDH(nets)}, a sector is called \emph{factorial} if its endomorphism algebra is a factor.)

Given an arbitrary factorial sector $K$, we now show that there exists a $K_i$ in the above list to which $K$ is stably isomorphic, i.e., such that
$K\otimes \ell^2\cong K_i\otimes \ell^2$.
Let us introduce the bicolored circles $S_2:=I_2\cup_{\partial I_2} -I_2$ and $S_4:=I_4\cup_{\partial I_4} -I_4$. 
We have isomorphisms
\(
K(S_2)\boxtimes_{A_l} H_l \cong K(S_l) \cong H_l \boxtimes_{A_m} K(S_4)
\)
of $S_l$-sectors (constructed as in~\cite[\lemvacvacvacdefects]{BDH(1*1)}).
Fusing with $H_r$ over $B$, we get an isomorphism
\[
K(S_2)\boxtimes_{A_l}H_{\mathit{ann}}\cong H_{\mathit{ann}} \boxtimes_{A_m} K(S_4).
\]
By Lemma \ref{lem: H_0 otimes H_0   <  H_Ann  --  with defects}, it follows that
\begin{align*}
K(S_b)\otimes H_m &\cong 
(K(S_2)\boxtimes_{A_l}H_b)\otimes H_m \\
&\cong K(S_2)\boxtimes_{A_l}(H_b\otimes H_m) \\
&\subset K(S_2) \boxtimes_{A_l} H_{\mathit{ann}}\\
&\cong (K(S_2) \boxtimes_{A_l} H_l) \boxtimes_{D(I_5) \otimes D(I_3)^\op} H_r \\
& \cong (H_l \boxtimes_{A_l} K(S_4)) \boxtimes_{D(I_5) \otimes D(I_3)^\op} H_r
\cong
H_{\mathit{ann}}\boxtimes_{A_m} K(S_4).
\end{align*}
Since $D$ is irreducible, $A_m$ is a factor, so $K(S_4)$ and $L^2A_m$ are stably isomorphic as $A_m$-modules, and we get the following (non-canonical) inclusion of $S_b$-sectors of $D$:
\[
\begin{split}
K(S_b) \otimes \ell^2 \cong
K(S_b) \otimes H_m \otimes \ell^2 &\subset
H_{\mathit{ann}} \boxtimes_{A_m} K(S_4) \otimes \ell^2 \\ & \cong 
H_{\mathit{ann}} \boxtimes_{A_m} L^2A_m \otimes \ell^2 \cong
H_{\mathit{ann}} \otimes \ell^2,
\end{split}
\]
where the first equality is induced by an arbitrary Hibert space isomorphism 
$\ell^2 \cong H_m \otimes \ell^2$.
The sector $K(S_b)$ is factorial. 
It therefore maps to a single summand $K_i\otimes \ell^2$ of $H_{\mathit{ann}} \otimes \ell^2$.
It follows that $K$ and $K_i$ are stably isomorphic.
In particular, this shows that there are at most finitely many stable isomorphism classes of factorial $D$-sectors on $S_b$.

By Lemma~\ref{lem:disintegratingdefects}, 
any $D$-sector can be disintegrated into irreducible ones.
As a consequence, if there existed a factorial 
sector of type~{\it II} or~{\it III}, 
then (as in~\cite[Cor. 58]{\KLM}) there would be uncountably many non-isomorphic irreducible sectors.
This is impossible, and so all factorial sectors must be of type~$I$.

We now show all irreducible $D$-sectors are finite.  Let us go back to $H_{\mathit{ann}}$ and analyze it as a $\{D(I)\}_{I\in \INT^{S_b}_{\circ\bullet}}$ - $\{D(I)\}_{I\in \INT^{S_m}_{\circ\bullet}}$~-representation.
Since each summand $K_i(S_b)$ in the decomposition \eqref{eq: decomposition of Hann} is a type~$I$ factorial $D$-sector,
we can write it as $L_i\otimes M_i$, where $L_i$ is an irreducible representation of $\{D(I)\}_{I\in \INT^{S_b}_{\circ\bullet}}$,
and the multiplicity space $M_i$ carries a residual action of
$\{D(I)\}_{I\in \INT^{S_m}_{\circ\bullet}}$.
The decomposition \eqref{eq: decomposition of Hann} then becomes
\[
{}_{A_l\,\bar\otimes\,A_m} (H_{\mathit{ann}})\, {}_{C_r\,\bar\otimes\,C_m} 
\,\,\cong\,\,\,
\bigoplus_i
{}_{A_l} L_i\, {}_{C_r} \otimes {}_{A_m} M_i\, {}_{C_m} .
\]
Since $H_{\mathit{ann}}$ is a dualizable $A$-$C$-bimodule, the bimodules ${}_{A_l} L_i {}_{C_r}$ must also be dualizable.
This finishes the argument, as any irreducible $D$-sector on $S_b$ is 
isomorphic to one of the $L_i$.
\end{proof}

Given a finite irreducible defect $D$, let $\Delta_D$ be the finite set of isomorphism classes of irreducible $D$-sectors.
For every $\lambda\in \Delta_D$, we pick a representative $H_\lambda$ of the isomorphism class,
which we draw as follows:
\[
\tikzmath[scale=.5]{
\filldraw[fill = gray!30] (0,0) circle (1);
\filldraw[pattern = north east lines, ultra thick] (0,1) arc (90:-90:1); 
\node[circle, fill = gray!30, inner sep=1.5, draw, densely dotted] at (0,0)  {$\scriptscriptstyle\lambda$};} 
\]
The set $\Delta_D$ has an involution $\lambda\mapsto \bar\lambda$ given by sending a Hilbert space $H_\lambda$ to its complex conjugate $H_{\bar\lambda}\cong \overline H_\lambda$,
with actions of $D(I)$ given by 
\[
a\overline \xi := \overline{\cala(j)(a^*)\xi},
\]
where $j:S^1\to S^1$ is the reflection in the horizontal axis (which is color preserving).
Note that the isomorphism $H_{\bar\lambda}\cong \overline H_\lambda$ is by no means canonical---see the discussion in \cite[Rem.\,2.4.2]{Bakalov-Kirillov(Lect-tens-cat+mod-func)}.

The following Peter--Weyl theorem for defects is analogous to a corresponding annular-sector decomposition theorem for conformal nets by Kawahigashi--Longo--M\"uger~\cite[Thm 9]{\KLM}, cf also~\cite[Thm 3.23]{BDH(nets)}:

\begin{theorem}
  \label{thm:KLM}
  Let $D$ be a finite irreducible defect, let $S_l$, $S_r$, $S_m$, $S_b$ be as in \eqref{eq(1.15)}, and let
  \[
  H_{\mathit{ann}}\,:=\,\,\,
  \tikzmath{
  \node (a) at (0,0) {$
  H_0(S_l,D)\underset{\cala(I_5)}\boxtimes H_0(S_r,D)\underset{\calb(I_3)}\boxtimes$};
  \def\dd{.5}\def\ll{.35}\def\rr{.25}
  \draw[dashed, rounded corners = 6] (a.east)++(0,.1) -- ++(\rr,0) -- ++(0,-\dd) -- ($(a.west) + (-\ll,-\dd) + (0,.1)$) -- ++(0,\dd) -- ++(\rr,0);} 
  \smallskip\]
  We then have a non-canonical isomorphism
  \begin{equation} \label{eq:KLM -- defects}
    H_{\mathit{ann}} \,\cong\,\, \bigoplus_{\lambda\in\Delta_D} H_\lambda(S_m) \otimes H_{\bar \lambda}(S_b)
  \end{equation}
  of $(S_m\sqcup S_b)$-sectors. 
  We draw this isomorphism as
\[
\def\coords{
  \coordinate (a) at (0,0);
  \coordinate (b) at (.15,1);
  \coordinate (c) at (-.2,2);
  \coordinate (d) at (0,3);
  \coordinate (e) at (-5,.4);
  \coordinate (f) at (-5,2.6);
  \coordinate (g) at (-1,1.2);
  \coordinate (h) at (-1,2);
  \coordinate (a') at (d);
  \coordinate (b') at (c);
  \coordinate (c') at (b);
  \coordinate (d') at (a);
\begin{scope}[yshift = 85, rotate= 180]
  \coordinate (e') at (-5,.4);
  \coordinate (f') at (-5,2.6);
  \coordinate (g') at (-1,1.2);
  \coordinate (h') at (-1,2);
\end{scope}
}
\tikzmath[scale=.35]{\coordinate (a) at (0,0);\coordinate (b) at (.15,1);\coordinate (c) at (-.2,2);\coordinate (d) at (0,3);\coordinate (e) at (-5,.4);\coordinate (f) at (-5,2.6);\coordinate (g) at (-1,1.2);\coordinate (h) at (-1,2);\coordinate (a') at (d);\coordinate (b') at (c);\coordinate (c') at (b);\coordinate (d') at (a);\begin{scope}[yshift = 85, rotate= 180]\coordinate (e') at (-5,.4);\coordinate (f') at (-5,2.6);\coordinate (g') at (-1,1.2);\coordinate (h') at (-1,2);\end{scope}
\fill[fill = gray!30] (a) to [out = 180, in = -45, looseness=1.1] (e) to [out = -45 + 180, in = 225, looseness=1.1] (f) to [out = 225 + 180, in = 180, looseness=1.1] node (a1) [pos = .37] {} (d) to [out = 0, in = -45+180, looseness=1.1] (e') to [out = -45, in = 225+180, looseness=1.1] (f') to [out = 225, in = 0, looseness=1.1] (d') (c) to [out = 180, in = 45, looseness=1.1]  (h) to [out = 45 + 180, in = -225, looseness=1.1] (g) to [out = -225 + 180, in = 180, looseness=1.1] (b) to [out = 0, in = 45+180, looseness=1.1]  (h') to [out = 45, in = -225+180, looseness=1.1] (g') to [out = -225, in = 0, looseness=1.1] node (b1) [pos = .37] {} (b');
\draw[ultra thick] (5.5,1.5) to [out = 90+180, in = 225+180, looseness=1] (f') to [out = 225, in = 0, looseness=1.1] (d') (a) to [out = 180, in = -45, looseness=1.1] (e) to [out = -45 + 180, in = -90, looseness=1] (-5.5,1.5) (-1.18,1.55) to [out = -90, in = -225, looseness=1] (g)  to [out = -225 + 180, in = 180, looseness=1.1] (c') to [out = 0, in = 45+180, looseness=1.1]  (h') to [out = 45, in = -90, looseness=1] (1.18,1.45);
\fill[pattern = north east lines](5.5,1.5) to [out = 90+180, in = 225+180, looseness=1] (f') to [out = 225, in = 0, looseness=1.1] (d') -- (a) to [out = 180, in = -45, looseness=1.1] (e) to [out = -45 + 180, in = -90, looseness=1] (-5.5,1.5) -- (-1.18,1.55) to [out = -90, in = -225, looseness=1] (g)  to [out = -225 + 180, in = 180, looseness=1.1] (c') to [out = 0, in = 45+180, looseness=1.1]  (h') to [out = 45, in = -90, looseness=1] (1.18,1.45) -- cycle;\draw[->] (a1.center) -- ++ (180:0.01);
\draw (a) to [out = 180, in = -45, looseness=1.1] (e) to [out = -45 + 180, in = 225, looseness=1.1] (f) to [out = 225 + 180, in = 180, looseness=1.1] (d)
to [out = 0, in = -45+180, looseness=1.1] (e') to [out = -45, in = 225+180, looseness=1.1] (f') to [out = 225, in = 0, looseness=1.1] (d');
\draw(c) to [out = 180, in = 45, looseness=1.1]  (h) to [out = 45 + 180, in = -225, looseness=1.1] (g) to [out = -225 + 180, in = 180, looseness=1.1] (b)
to [out = 0, in = 45+180, looseness=1.1]  (h') to [out = 45, in = -225+180, looseness=1.1] (g') to [out = -225, in = 0, looseness=1.1] (b');
\draw[->] (b1.center) -- ++ (0:0.01);} 
  \; \xrightarrow{\cong} \;
  \bigoplus_{\lambda\in\Delta_D}\, \;
\tikzmath[scale=.35]{\coords
\useasboundingbox (-2,.4) rectangle (2,2.6);
\filldraw[fill = gray!30] 
	(c) to [out = 180, in = 45, looseness=1.1] 
	(h) to [out = 45 + 180, in = -225, looseness=1.1]
	(g) to [out = -225 + 180, in = 180, looseness=1.1]
	(c') to [out = 0, in = 45 + 180, looseness=1.1] 
	(h') to [out = 45, in = -225 + 180, looseness=1.1]
	(g') to [out = -225, in = 0, looseness=1.1]                                     node (b1) [pos = .7] {}
	(b');
\filldraw[ultra thick, pattern = north east lines] (-1.18,1.55) to [out = -90, in = -225, looseness=1] (g)  to [out = -225 + 180, in = 180, looseness=1.1] (c') to [out = 0, in = 45+180, looseness=1.1]  (h') to [out = 45, in = -90, looseness=1] (1.18,1.45); \draw[->] (b1.center) -- ++ (0:0.01);
\node[circle, fill = gray!30, inner sep=.5, draw, densely dotted] at (0,1.5)  {$\scriptscriptstyle\lambda$};
} 
\otimes
\tikzmath[scale=.35]{
\coords
\filldraw[fill = gray!30]
	(a) to [out = 180, in = -45, looseness=1.1] 
	(e) to [out = -45 + 180, in = 225, looseness=1.1]
	(f) to [out = 225 + 180, in = 180, looseness=1.1] node (a1) [pos = .5] {}
	(a') to [out = 0, in = -45 + 180, looseness=1.1] 
	(e') to [out = -45, in = 225 + 180, looseness=1.1]
	(f') to [out = 225, in = 0, looseness=1.1] (d');
\draw[->] (a1.center) -- ++ (175:0.01);
\filldraw[ultra thick, pattern = north east lines] (5.5,1.5) to [out = 90+180, in = 225+180, looseness=1] (f') to [out = 225, in = 0, looseness=1.1] (a) to [out = 180, in = -45, looseness=1.1] (e) to [out = -45 + 180, in = -90, looseness=1] (-5.5,1.5);
\node[circle, fill = gray!30, inner sep=1.5, draw, densely dotted] at (0,1.5)  {$\scriptstyle\bar \lambda$};
} 
\]
\end{theorem}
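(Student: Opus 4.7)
The strategy is to combine the decomposition results from the proof of Theorem~\ref{thm: KLM -- all irreducible sectors are finite : with defects} with a Frobenius-reciprocity argument that generalizes Lemma~\ref{lem: H_0 otimes H_0   <  H_Ann  --  with defects}.

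First, the decomposition argument in the proof of Theorem~\ref{thm: KLM -- all irreducible sectors are finite : with defects} shows that $H_{\mathit{ann}}$ splits as a finite sum of factorial type~$I$ $D$-sectors on $S_b$, each of the form $L_i \otimes M_i$ with $L_i$ an irreducible $S_b$-sector and $M_i$ a multiplicity space that carries a residual $S_m$-sector structure. The stable-isomorphism argument in that proof shows that every $\lambda \in \Delta_D$ appears as some $L_i$. Applying the analogous decomposition on the $S_m$-side to each $M_i$, and grouping terms, we obtain
\[
H_{\mathit{ann}} \;\cong\; \bigoplus_{\lambda, \mu \in \Delta_D}\, n_{\lambda \mu}\, H_\lambda(S_m) \otimes H_\mu(S_b)
\]
for some non-negative integers $n_{\lambda \mu}$. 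The theorem reduces to showing $n_{\lambda \mu} = \delta_{\mu, \bar\lambda}$.

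To compute $n_{\lambda \mu}$, I would mimic the Frobenius-reciprocity computation used in Lemma~\ref{lem: H_0 otimes H_0   <  H_Ann  --  with defects}. Using the dualizability of $H_l$ as an $A$-$B$-bimodule, with dual $\check H_l \cong H_0(-S_l, D)$ provided by Lemma~\ref{lem: H_0(-S) -- for defects}, one obtains
\[
n_{\lambda \mu} \;=\; \dim \hom_{A, C}\bigl(H_\lambda(S_m) \otimes H_\mu(S_b),\, H_l \boxtimes_B H_r\bigr) \;=\; \dim \hom_{B, C}\bigl(\check H_l \boxtimes_A (H_\lambda(S_m) \otimes H_\mu(S_b)),\, H_r\bigr).
\]
The main step is to identify the $B$-$C$-bimodule $\check H_l \boxtimes_A (H_\lambda(S_m) \otimes H_\mu(S_b))$ as an explicit $S_r$-sector of $D$; geometrically this corresponds to gluing the mirror image $-\ID_l$ onto the pair of sector circles along the $A$-boundary $I_2 \cup I_4$. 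Extending the vacuum-sector identification of \cite[\lemvacvacvacdefects]{BDH(1*1)} (together with \cite[\lemHKK]{BDH(modularity)}) to arbitrary sectors, the result is a sector on $S_r$ which contains $H_r = H_0(S_r, D)$ as an irreducible summand with multiplicity one precisely when $\mu = \bar\lambda$, and otherwise decomposes into non-vacuum irreducibles.

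Given this identification the conclusion is immediate: since $H_r$ is irreducible as a $B$-$C$-bimodule, the right-hand $\hom$ space has dimension equal to the multiplicity of $H_r$ in the fusion sector, namely $\delta_{\mu, \bar\lambda}$. The main obstacle I anticipate is precisely the geometric identification of $\check H_l \boxtimes_A (H_\lambda(S_m) \otimes H_\mu(S_b))$: carrying out that step rigorously requires careful tracking of how the $A$-fusion propagates non-vacuum sector data on $S_m$ and $S_b$ through the mirror half-annulus to produce a specific $D$-sector on $S_r$, which is the key new computation beyond what was already needed for Lemma~\ref{lem: H_0 otimes H_0   <  H_Ann  --  with defects}.
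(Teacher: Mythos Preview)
Your approach is essentially the paper's own: decompose $H_{\mathit{ann}}$ as $\bigoplus_{\lambda,\mu} N_{\lambda\mu}\,H_\lambda(S_m)\otimes H_\mu(S_b)$ using the argument from Theorem~\ref{thm: KLM -- all irreducible sectors are finite : with defects}, then compute $N_{\lambda\mu}$ via Frobenius reciprocity for the dualizable bimodule ${}_AH_l{}_B$ exactly as you wrote.

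The one place where the paper is sharper than your outline is the ``main obstacle'' you flag. Rather than arguing indirectly that $\check H_l\boxtimes_A(H_\lambda\otimes H_\mu)$ contains $H_r$ with the right multiplicity, the paper identifies this bimodule \emph{exactly}: since $A=A_l\,\bar\otimes\,A_m^{\op}$ acts on the tensor factor $H_\lambda\otimes H_\mu$ with $A_m^{\op}$ acting through $H_\lambda$ and $A_l$ through $H_\mu$, one can rewrite
\[
\check H_l\boxtimes_A(H_\lambda\otimes H_\mu)\;\cong\;H_\lambda\boxtimes_{A_m^{\op}}\check H_l\boxtimes_{A_l}H_\mu,
\]
and this is nothing but the vertical fusion $K:=H_\lambda\boxtimes H_\mu$ transported to $S_r$ (geometrically, the mirror half-disk $-\ID_l$ sandwiched between the two sector disks is the pair-of-pants realizing vertical composition). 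The computation then ends in one line: $\hom_{B,C}(K,H_r)=\hom_{B,C}(K,H_0)$ is $\IC$ if $\mu=\bar\lambda$ and $0$ otherwise, by the definition of the conjugate sector. So your anticipated difficulty dissolves once you recognize the fusion as the vertical composition itself; no separate multiplicity analysis is needed.
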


\begin{proof}
Let $H_l$, $H_r$, $A$, $A_l$, $A_m$, $B$, $C$, $C_m$, $C_r$
be as in the proof 
of Theorem \ref{thm: KLM -- all irreducible sectors are finite : with defects},
and let $\check H_l:=H_0(-S_l)$ be the dual bimodule to ${}_A H_{l\,B}$ (see Lemma \ref{lem: H_0(-S) -- for defects}).

The Hilbert space $H_{\mathit{ann}} = H_l\boxtimes_B H_r$ is a finite $A$-$C$-bimodule and therefore splits into finitely many irreducible summands.
By the argument in the proof of Theorem \ref{thm: KLM -- all irreducible sectors are finite : with defects},
each irreducible summand is the tensor product of an
irreducible $D$-sector on $S_m$ and an irreducible $D$-sector on $S_b$.
So we can write $H_{\mathit{ann}}$ as a direct sum
\begin{equation*}
H_{\mathit{ann}} \cong \bigoplus_{\lambda,\mu\in \Delta_D} N_{\lambda\mu}\, H_{\lambda}(S_m) \otimes H_{\mu}(S_b)
\end{equation*}
with finite multiplicities $N_{\lambda\mu}\in\IN$.

Given $\lambda,\mu\in \Delta_D$, we now compute $N_{\lambda\mu}$. 
Let $K$ be the vertical fusion of $H_\lambda$ and $H_\mu$.
By slight abuse of notation, we abbreviate $H_\lambda:=H_\lambda(S_m)$, $H_{\mu}:=H_{\mu}(S_b)$, and $K:=K(S_r)$.
We then have
\[\begin{split}
\hom_{A,C}\big(H_\lambda \otimes H_{\mu},H_{\mathit{ann}}\big)
=\,\,&\hom_{A,C}\big(H_\lambda \otimes H_{\mu},H_l\boxtimes_B H_r\big)\\
=\,\,&\hom_{B,C}\big(\check H_l\boxtimes_A(H_\lambda \otimes H_{\mu}),H_r\big)\\
=\,\,&\hom_{B,C}\big(H_\lambda\boxtimes_{A_m^\op}\check H_l\boxtimes_{A_l^{\phantom{\op}}}\!\! H_{\mu},H_r\big)\\
=\,\,&\hom_{B,C}\big(K,H_r\big)\\
=\,\,&\begin{cases}
\IC&\text{if $\mu=\bar\lambda$}\\
0&\text{otherwise.}\\
\end{cases}
\end{split}\]
If follows that $N_{\lambda\mu}=\delta_{\bar\lambda\mu}$.
\end{proof}

\begin{remark}
The isomorphism \eqref{eq:KLM -- defects} is non-canonical.
Actually, it doesn't even make sense to ask whether or not it is canonical since the right hand side of the equation is only well defined up to non-canonical isomorphism.
\end{remark}

\begin{corollary}\label{cor: KLM for defects + fill in the hole}
Let $S_l$, $S_r$, $S_b$, $S_m$ and $I_1,I_2,\ldots,I_8$ be as in \eqref{eq(1.15)}.
Then the algebra generated by $D(I_4)$ and $D(I_6)$ on $H_{\mathit{ann}}$
is canonically isomorphic to $\bigoplus_{\lambda\in\Delta_D} \bfB(H_\lambda(S_m,D))$.
Moreover, there is a non-canonical isomorphism
\begin{equation}\label{eq: fill the whole -- defect}
H_{\mathit{ann}}\,\boxtimes_{(D(I_4)\vee D(I_6))^\op} H_0(S_m,D)\,\,\cong\,\, H_0(S_b,D)
\end{equation}
which we represent as follows:
\[
\def\coords{\coordinate (a) at (0,0);\coordinate (b) at (.15,1);\coordinate (c) at (-.2,2);\coordinate (d) at (0,3);\coordinate (e) at (-5,.4);\coordinate (f) at (-5,2.6);\coordinate (g) at (-1,1.2);\coordinate (h) at (-1,2);\coordinate (a') at (d);\coordinate (b') at (c);\coordinate (c') at (b);\coordinate (d') at (a);
\begin{scope}[yshift = 85, rotate= 180]\coordinate (e') at (-5,.4);\coordinate (f') at (-5,2.6);\coordinate (g') at (-1,1.2);\coordinate (h') at (-1,2);
\end{scope}
}
\tikzmath[scale=.35]{\coords
\fill[fill = gray!30]
	(a) to [out = 180, in = -45, looseness=1.1] 
	(e) to [out = -45 + 180, in = 225, looseness=1.1]
	(f) to [out = 225 + 180, in = 180, looseness=1.1] node (a1) [pos = .5] {}
	(a') to [out = 0, in = -45 + 180, looseness=1.1] 
	(e') to [out = -45, in = 225 + 180, looseness=1.1]
	(f') to [out = 225, in = 0, looseness=1.1] (d');
\draw[ultra thick] (5.5,1.5) to [out = 90+180, in = 225+180, looseness=1] (f') to [out = 225, in = 0, looseness=1.1] (d') -- (c')(a) to [out = 180, in = -45, looseness=1.1] (e) to [out = -45 + 180, in = -90, looseness=1] (-5.5,1.5) (-1.18,1.5) to [out = -90, in = -225, looseness=1] (g)  to [out = -225 + 180, in = 180, looseness=1.1] (c') to [out = 0, in = 45+180, looseness=1.1]  (h') to [out = 45, in = -90, looseness=1] (1.18,1.5);
\fill[pattern = north east lines](5.5,1.5) to [out = 90+180, in = 225+180, looseness=1] (f') to [out = 225, in = 0, looseness=1.1] (d') -- (a) to [out = 180, in = -45, looseness=1.1] (e) to [out = -45 + 180, in = -90, looseness=1] (-5.5,1.5) -- cycle;
\draw (a) to [out = 180, in = -45, looseness=1.1] (e) to [out = -45 + 180, in = 225, looseness=1.1] (f) to [out = 225 + 180, in = 180, looseness=1.1] (d)
to [out = 0, in = -45+180, looseness=1.1] (e') to [out = -45, in = 225+180, looseness=1.1] (f') to [out = 225, in = 0, looseness=1.1] (d') (a') -- ($(b')+(.05,0)$);
\draw(c) to [out = 180, in = 45, looseness=1.1]  (h) to [out = 45 + 180, in = -225, looseness=1.1] (g) to [out = -225 + 180, in = 180, looseness=1.1] (b)
to [out = 0, in = 45+180, looseness=1.1]  (h') to [out = 45, in = -225+180, looseness=1.1] (g') to [out = -225, in = 0, looseness=1.1] node (b1) [pos = .37] {} (b');
\draw[->] (a1.center) -- ++ (180:0.01); \draw[->] (b1.center) -- ++ (-3:0.01);
} 
\quad\cong\quad
\tikzmath[scale=.35]{
\coords
\filldraw[fill = gray!30]
	(a) to [out = 180, in = -45, looseness=1.1] 
	(e) to [out = -45 + 180, in = 225, looseness=1.1]
	(f) to [out = 225 + 180, in = 180, looseness=1.1] node (a1) [pos = .5] {}
	(a') to [out = 0, in = -45 + 180, looseness=1.1] 
	(e') to [out = -45, in = 225 + 180, looseness=1.1]
	(f') to [out = 225, in = 0, looseness=1.1] (d');
\draw[->] (a1.center) -- ++ (175:0.01);
\filldraw[ultra thick, pattern = north east lines] (5.5,1.5) to [out = 90+180, in = 225+180, looseness=1] (f') to [out = 225, in = 0, looseness=1.1] (a) to [out = 180, in = -45, looseness=1.1] (e) to [out = -45 + 180, in = -90, looseness=1] (-5.5,1.5);
} 
\]
\end{corollary}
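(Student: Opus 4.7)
The plan is to deduce both statements from the Peter--Weyl decomposition of Theorem~\ref{thm:KLM}. Fix once and for all an isomorphism $\Phi: H_{\mathit{ann}} \xrightarrow{\cong} \bigoplus_{\lambda\in\Delta_D} H_\lambda(S_m)\otimes H_{\bar\lambda}(S_b)$ of $(S_m\sqcup S_b)$-sectors.

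For the first claim, since $I_4, I_6 \subset S_m$, both $D(I_4)$ and $D(I_6)$ act on each Peter--Weyl summand solely through the $H_\lambda(S_m)$ tensor factor, and it remains to verify that their join inside each $\bfB(H_\lambda(S_m))$ is the whole of $\bfB(H_\lambda(S_m))$. This follows from combining: (i) the irreducibility of $H_\lambda$ as a $D$-sector, which means the collective action of $\{D(I)\}_{I \subset S_m}$ on $H_\lambda(S_m)$ generates $\bfB(H_\lambda(S_m))$; and (ii) strong additivity of the defect in each sector, which lets us write $D(I) = D(I \cap I_4) \vee D(I \cap I_6)$ for any bicolored interval $I$ straddling a transition point between $I_4$ and $I_6$, and thus place every $D(I)$ inside $D(I_4) \vee D(I_6)$. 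Taking the direct sum over $\lambda$ identifies $D(I_4) \vee D(I_6) \subset \bfB(H_{\mathit{ann}})$ with $\bigoplus_\lambda \bfB(H_\lambda(S_m))$, canonically because the central projections onto the isotypic components are intrinsic to $D(I_4) \vee D(I_6)$ and so independent of the non-canonical choice of $\Phi$.

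For the second claim, substituting the Peter--Weyl decomposition and using the algebra identification from the first part yields
\[
H_{\mathit{ann}}\boxtimes_{(D(I_4)\vee D(I_6))^\op} H_0(S_m,D) \,\cong\, \bigoplus_{\lambda\in\Delta_D}\bigl(H_\lambda(S_m)\otimes H_{\bar\lambda}(S_b)\bigr) \boxtimes_{\bfB(H_\lambda(S_m))^\op} H_0(S_m,D).
\]
The action of $\bigoplus_\lambda \bfB(H_\lambda(S_m))^\op$ on $H_0(S_m,D)$ factors through the $\lambda = 0$ summand, where $0 \in \Delta_D$ denotes the vacuum class (irreducible since $D$ is irreducible), so only the $\lambda = 0$ term contributes. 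There the relative tensor product $H_0(S_m) \boxtimes_{\bfB(H_0(S_m))^\op} H_0(S_m,D) \cong \CC$ is the standard-form identity for type~$I$ factors, leaving $H_{\bar 0}(S_b)$; since the vacuum sector is self-dual (via the modular-conjugation argument of Lemma~\ref{lem: H_0(-S) -- for defects}), $\bar 0 = 0$ in $\Delta_D$, and the answer simplifies to $H_0(S_b) \cong H_0(S_b,D)$.

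The principal technical obstacle is ingredient~(ii): while irreducibility of the sector ensures that the full net algebra on $S_m$ generates $\bfB(H_\lambda(S_m))$, reducing to just the two bicolored intervals $I_4, I_6$ requires the strong additivity of the defect to propagate into every sector---a property that must be verified in the bicolored defect setting but is otherwise standard.
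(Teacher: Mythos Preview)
Your argument is correct and is precisely the intended deduction from Theorem~\ref{thm:KLM}; the paper states the corollary without proof, and your fill-in is the natural one.

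One remark on the point you flag as the ``principal technical obstacle'': you do not in fact need any new strong additivity property of the defect $D$. In the configuration~\eqref{eq(1.15)}, the two meeting points of $I_4$ and $I_6$ on $S_m$ are \emph{not} the colour-change points of $S_m$ (those lie in the interiors of $I_4$ and $I_6$); they lie in the unicoloured arcs. Hence any subinterval $J\subset S_m$ straddling $I_4\cap I_6$ is entirely white or entirely black, so $D(J)=\cala(J)$ or $\calb(J)$. Strong additivity of $\cala$ and $\calb$ (which is part of the standing hypotheses, cf.\ \cite[Def.\,1.1]{BDH(nets)}) then gives $D(J)=\cala(J\cap I_4)\vee\cala(J\cap I_6)\subset D(I_4)\vee D(I_6)$, and likewise for $\calb$. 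Combined with irreducibility of $H_\lambda(S_m)$ this yields $D(I_4)\vee D(I_6)=\bfB(H_\lambda(S_m))$ without further work. The rest of your argument (projection onto the $\lambda=0$ summand, the type~$I$ computation, and $\bar 0=0$) is fine.
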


\subsection{Extending defects to bicolored $1$-manifolds}

In~\cite[\thmnetsarbitrarymanifolds]{BDH(modularity)}, we extended the domain of definition of a conformal net from the category of intervals to the category of all compact $1$-manifolds (where the morphisms are embeddings that are either orientation preserving or orientation reversing).  In~\cite[\eqdefectsdisconnected]{BDH(1*1)}, we extended a defect to take values on disjoint unions of intervals.  We now further extend a defect to all compact bicolored $1$-manifolds, with an arbitrary number of color-change points.  This extension will be useful when we construct the unit and counit sectors for adjunctions of defects, because the composite of a defect and its adjoint can be naturally reexpressed as the value of the defect on an interval with two color-change points.

\begin{definition} A \emph{bicolored $1$-manifold} is a compact $1$-manifold $M$ (always oriented), possibly with boundary,
equipped with two compact submanifolds $M_\circ, M_\bullet\subset M$ such that $M_\circ\cap M_\bullet$ consists of finitely many points.
Moreover, each point of $M_\circ\cap M_\bullet$ should be equipped with a local coordinate 
$(-\e,\e) \hookrightarrow M$ that sends $(-\e,0]$ to $M_{\circ}$ and $[0,\e)$ to $M_{\bullet}$.
\end{definition}

Given a bicolored $1$-manifold $M$, we pick a decomposition $M=M_0\cup M_1$ such that 
$P:=M_0\cap M_1$ has finitely many points, none of which is a color-change point.
Every connected component of $M_0$ and $M_1$ should be an interval, and should contain at most one color-change point.
Pick local coordinates around $P$, and define $N_i:=(M_i\times\{1\})\cup Q\subset M\times [0,1]$, where $Q:=P\times [0,1]$ inherits its bicoloring from $P$.
The manifolds $N_i$ and $Q$ are oriented so as to make the inclusions $M_i\rightarrow N_i$ and $Q\rightarrow N_1$ orientation preserving;
the inclusion $Q\rightarrow N_0$ is then orientation reversing.
The local coordinates around $P$ induce a smooth structure on $N_i$.
As in~\cite[\eqdefectsdisconnected]{BDH(1*1)}, we define the defect on a disjoint union of bicolored intervals by $D(I_1 \cup \ldots \cup I_n) := D(I_1) \bar\otimes \ldots \bar\otimes D(I_n)$.  We then define the defect on any bicolored $1$-manifold as follows.

\begin{definition}\label{def: D on all 1-manifolds}
Given a defect $D$ and a bicolored $1$-manifold $M$, we define the value of $D$ on $M$ to be
\begin{equation}\label{eq: def of D(M)}
D(M)\,:=\, D(N_0)\circledast_{D(Q)} D(N_1).
\end{equation}
\end{definition}
\nid (See~\cite[Sec 1.E \& App B.IV]{BDH(1*1)} for discussion and the definition of the relative fusion product $\circledast$ of von Neumann algebras.)

In~\cite[Cor. 1.13]{BDH(modularity)}, we showed that the value of a conformal net on a $1$-manifold was independent of the choice of decomposition used in the definition; the same argument generalizes to the situation here, showing that the algebra \eqref{eq: def of D(M)} is independent (up to canonical isomorphism) of the choice of decomposition~${M=M_1\cup M_2}$.

Here is an example of the above definition:
\[
\def\coords{  
  \coordinate (a) at (.4,0.1);\coordinate (ab) at (1.3,0.03);
  \coordinate (b) at (2,0);\coordinate (bc) at (3.05,.8);
  \coordinate (c) at (2.5,1.5);\coordinate (cd) at (1,1.45);
  \coordinate (d) at (-0.1,1.2);\coordinate (da) at (-.5,.5);
  
  \coordinate (e) at (4.3,0);\coordinate (ef) at (5.3,.6);
  \coordinate (f) at (4.8,1.3);\coordinate (fe) at (3.7,.5);

  \coordinate (g) at (5.6,0);\coordinate (gh) at (5.9,0.01);
  \coordinate (h) at (6.2,.05);\coordinate (hi) at (6.6,0.3);
  \coordinate (i) at (7,.5);
  }
\def\AB{  \draw (a) to[out = 10, in = 170, looseness=1.1] (ab);\draw[ultra thick] (ab) to[out = -10, in = 185, looseness=1.1] (b); }
\def\ABpegs{  \draw[rounded corners=1] ($(a)+(0,-.7)$) -- (a) to[out = 10, in = 170, looseness=1.1] (ab);\draw[ultra thick, rounded corners=1] (ab) to[out = -10, in = 185, looseness=1.1] (b) -- ($(b)+(0,-.7)$);}
\def\BC{  \draw[ultra thick] (b) to[out = 5, in = -105, looseness=1] (bc);\draw (bc) to[out = 75, in = -10, looseness=1.2] (c);}
\def\BCpegs{  \draw[ultra thick, rounded corners=1] ($(b)+(0,-.7)$) -- (b) to[out = 5, in = -105, looseness=1] (bc);\draw[rounded corners=1] (bc) to[out = 75, in = -10, looseness=1.2] (c) -- ($(c)+(0,-.7)$);}
\def\CD{  \draw (c) to[out = 170, in = 15, looseness=1] (cd); \draw[ultra thick] (cd) to[out = 195, in = 10, looseness=1] (d);}
\def\CDpegs{  \draw[rounded corners=1] ($(c)+(0,-.7)$) -- ($(c)+(0,-.7)$) --  (c) to[out = 170, in = 15, looseness=1] (cd); \draw[ultra thick, rounded corners=1] (cd) to[out = 195, in = 10, looseness=1] (d) -- ($(d)+(0,-.7)$);}
\def\DA{  \draw[ultra thick] (d) to[out = 190, in = 110, looseness=1.2] (da);  \draw (da) to[out = -70, in = 190, looseness=1.1] (a);}
\def\DApegs{  \draw[ultra thick, rounded corners=.5] ($(d)+(0,-.7)$) -- (d) to[out = 190, in = 110, looseness=1.2] (da);  \draw[rounded corners=1] (da) to[out = -70, in = 190, looseness=1.1] (a) -- ($(a)+(0,-.7)$);}
\def\EF{  \draw (e) to[out = 0, in = -110, looseness=1] (ef); \draw[ultra thick] (ef) to[out = 70, in = 10, looseness=1.5] (f);}
\def\EFpegs{  \draw[rounded corners=1] ($(e)+(0,-.7)$) -- (e) to[out = 0, in = -110, looseness=1] (ef); \draw[ultra thick, rounded corners=.5] (ef) to[out = 70, in = 10, looseness=1.5] (f) -- ($(f)+(0,-.7)$);}
\def\FE{  \draw[ultra thick] (f) to[out = 190, in = 75, looseness=.9] (fe); \draw (fe) to[out = -105, in = 180, looseness=1.1] (e);}
\def\FEpegs{  \draw[ultra thick, rounded corners=1] ($(f)+(0,-.7)$) -- (f) to[out = 190, in = 75, looseness=.9] (fe); \draw[rounded corners=1] (fe) to[out = -105, in = 180, looseness=1.1] (e) -- ($(e)+(0,-.7)$);}
\def\GH{  \draw[ultra thick] (g) to[out = 0, in = 185, looseness=1] (gh); \draw (gh) to[out = 5, in = 190, looseness=1] (h);}
\def\GHpegs{  \draw[ultra thick] (g) to[out = 0, in = 185, looseness=1] (gh); \draw[rounded corners=1] (gh) to[out = 5, in = 190, looseness=1] (h) -- ($(h)+(0,-.7)$);}
\def\HI {  \draw (h) to[out = 10, in = -130, looseness=.7] (hi); \draw[ultra thick] (hi) to[out = 50, in = 180, looseness=1] (i);}
\def\HIpegs {  \draw[rounded corners=1] ($(h)+(0,-.7)$) -- (h) to[out = 10, in = -130, looseness=.7] (hi); \draw[ultra thick] (hi) to[out = 50, in = 180, looseness=1] (i);}
  D\bigg(
  \tikzmath[scale=.35]
  {\useasboundingbox (-.65,0.1) rectangle (7.05,1.2);
  \coords
  \AB\BC\CD\DA\EF\FE\GH\HI  } 
  \bigg):=
  D\bigg(
  \tikzmath[scale=.35]
  {\useasboundingbox (-.7,0) rectangle (6.7,1.2);
  \coords
  \BCpegs\DApegs\FEpegs\GHpegs} 
  \bigg)
  \underset{
  D\big(
  \tikzmath[scale=.175]
  {\useasboundingbox (-.7,-.5) rectangle (6.7,1.5);
  \coords
  \foreach \x in {(a),(c), (e), (h)} \draw \x -- +(0,-.7);
  \foreach \x in {(b),(d),(f)} \draw[line width = 1] \x -- +(0,-.7);} 
  \big)}{\circledast}
  D\bigg(
  \tikzmath[scale=.35]
  {\useasboundingbox (-.35,0) rectangle (7.1,1.2);
  \coords
  \ABpegs\CDpegs\EFpegs\HIpegs} 
  \bigg)
\]

\noindent 
In Section~\ref{sec:finitenetsdualizable}, this extension of a defect to take values on all bicolored $1$-manifolds will allow a computationally convenient expression for the composite of a defect and its dual.

\begin{proposition}\label{prop: tricolored interval acts on H_0(D)}
Let ${}_\cala D_\calb$ be a finite defect.
Let $S^1$ be the standard bicolored circle, let $I\subset S^1$ be the following bicolored manifold
\[
\tikzmath[scale=.4]{
\draw[ultra thick](0,1) arc (90:-90:1);
\draw[ultra thick](0,1.3) arc (90:-90:1.3);
\draw(0,1) arc (90:270:1);
\draw (110:1.3) arc (110:-110:1.3);
\draw(35:1.8) node {$I$};
\draw(150:.3) node {$S^1$};
}
\]
and let $D(I)$ be as in \ref{def: D on all 1-manifolds}.
Then the natural action of $D(I\cap S^1_\top)\otimes_\alg D(I\cap S^1_\bot)$ on $H_0(D)$
extends to a normal (that is, ultraweakly continuous) action of $D(I)$.
\end{proposition}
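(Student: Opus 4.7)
The plan is to apply Definition~\ref{def: D on all 1-manifolds} directly, expressing $D(I)$ as a relative fusion product and constructing its action on $H_0(D)$ via the universal property of $\circledast$. First, pick a non--color-change point $p$ in the interior of $I \cap S^1_\bullet$, and decompose $I = M_0 \cup M_1$ at $p$, so that each $M_i$ is a bicolored interval with exactly one color-change point and $M_0 \cap M_1 = \{p\}$. Applying Definition~\ref{def: D on all 1-manifolds} then gives
\[
D(I) \,=\, D(N_0) \circledast_{D(Q)} D(N_1),
\]
where $N_i$ is the one-color-change bicolored interval obtained by smoothing the L-shape $M_i \cup Q$, and $Q$ is the short transverse black tail at $p$. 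Since $N_i$ is bicolored-diffeomorphic to $M_i$, there are canonical identifications $D(N_i) \cong D(M_i)$ under which the subalgebra $D(Q) = \calb(Q) \subset D(N_i)$ corresponds to $\calb(U_p) \subset D(M_i)$ for a small black subinterval $U_p \subset M_i$ containing $p$.

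Next, since $M_0$ and $M_1$ are bicolored subintervals of $S^1$, the sector structure on $H_0(D) = H_0(S^1,D)$ yields normal actions $D(M_0), D(M_1) \to \bfB(H_0(D))$ satisfying the isotony and locality axioms of~\eqref{eq:  orange star}. By locality, the images of $D(M_0)$ and $D(M_1)$ commute, since $M_0, M_1$ have disjoint interiors. By isotony applied to the black conformal net $\calb$, the two inclusions $\calb(U_p) \subset D(M_0)$ and $\calb(U_p) \subset D(M_1)$ both act on $H_0(D)$ as the natural action of $\calb(U_p)$ coming from $U_p \subset S^1_\bullet$. Thus we obtain compatible commuting normal actions of $D(N_0)$ and $D(N_1)$ on $H_0(D)$ whose restrictions to $D(Q)$ agree.

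Finally, I would invoke the universal property of the relative fusion product of von Neumann algebras developed in~\cite[App.~B.IV]{BDH(1*1)}: this data should assemble into a normal action of $D(N_0) \circledast_{D(Q)} D(N_1) = D(I)$ on $H_0(D)$, which by construction extends the given action of $D(I \cap S^1_\top) \otimes_{alg} D(I \cap S^1_\bot) = D(M_0) \otimes_{alg} D(M_1)$. Independence of the resulting $D(I)$-action from the choice of splitting point $p$ follows from the independence of $D(M)$ under changes of decomposition established after Definition~\ref{def: D on all 1-manifolds}. The main obstacle is rigorously deducing the required universal property from the construction of $\circledast$ in~\cite[App.~B.IV]{BDH(1*1)}: the precise statement needed is that two commuting normal actions of $A$ and $C$ on a Hilbert space $H$, which agree on a common subalgebra $B$, induce a normal action of $A \circledast_B C$ on $H$. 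Once this is verified, normality of the assembled action of $D(I)$ is automatic, as it arises as a normal representation of the von Neumann algebra $D(I)$.
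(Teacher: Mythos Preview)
Your proposed argument has a genuine gap at the key step. The ``universal property'' you invoke---that commuting normal actions of $A$ and $C$ on a Hilbert space $H$, agreeing on a common subalgebra $B$, assemble into a normal action of $A \circledast_B C$---is not a general fact about the relative fusion product, and it already fails when $B=\IC$. For instance, let $A=R$ be the hyperfinite $\mathrm{II}_1$ factor and $C=R^{\op}$, acting on $H=L^2(R)$ by left and right multiplication. These actions commute, yet $A \circledast_\IC C = R\,\bar\otimes\,R^{\op}$ is a $\mathrm{II}_1$ factor, whereas the algebra they generate on $L^2(R)$ is all of $\bfB(L^2(R))$, which is type $\mathrm{I}_\infty$; since factors are simple as von Neumann algebras, there is no nonzero normal $*$-homomorphism from the former to the latter. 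So the ``main obstacle'' you flag is not a technicality to be checked but the entire content of the proposition.

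A telltale sign is that your argument never uses the finiteness of $D$. Compare Definition~\ref{def: finiteness for defects}: a defect is declared finite precisely when a certain algebraic-tensor action on $H_0(D)$ extends to the spatial tensor product---exactly an instance of the extension you are assuming for free. The paper's proof proceeds by an entirely different, geometric route. By construction $D(I)$ acts normally on a half-annular Hilbert space; after gluing in a small piece one obtains a normal action on the annular sector $H_{\mathit{ann}}$. The Peter--Weyl decomposition for defects (Corollary~\ref{cor: KLM for defects + fill in the hole}), which genuinely requires finiteness of $D$, is then used to ``plug the hole'' of the annulus and transport the $D(I)$-action down to $H_0(D)$. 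This detour through the annulus is what supplies the missing extension; it cannot be shortcut by an appeal to abstract properties of $\circledast$.
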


\begin{proof}
We first address the case when $D$ is irreducible.
By definition, the algebra $D(I)$ acts (normally) on
\[
\tikzmath[scale=.06]
{
\fill[fill = gray!30] (-180:8) arc (-180:135:8) -- (135:15) arc (135:-180:15) -- cycle;
\draw[ultra thick] (90:8) arc (90:-90:8) (90:15) arc (90:-90:15) (8,0)--(15,0);
\draw (90:8) arc (90:135:8) (90:15) arc (90:135:15) (135:8)--(135:15);
\draw (-90:8) arc (-90:-180:8) (-90:15) arc (-90:-180:15) (-180:8)--(-180:15);
\fill[pattern = north east lines] (-90:8) arc (-90:90:8) -- (90:15) arc (90:-90:15) -- cycle;
}
\]
Fusing in \,$
\tikzmath[scale=.05]
{
\useasboundingbox (180:15) rectangle ++(9,6);
\filldraw[fill = gray!30] (180:8) arc (180:135:8) -- (135:15) arc (135:180:15) -- cycle;
}
$\,, we can use the fact that a vacuum sector of a conformal net fuses with a vacuum sector of a defect to a vacuum sector of the defect~\cite[\lemvacvacvacdefects]{BDH(1*1)} and the fact that cyclic fusion is cyclically invariant~\cite[App. A]{BDH(modularity)}
to see that $D(I)$ also acts on 
\[
H_{\mathit{ann}}\,\,:=\,\,\,\tikzmath[scale=.06]
{
\fill[fill = gray!30] (-180:8) arc (-180:180:8) -- (180:15) arc (180:-180:15) -- cycle;
\draw[ultra thick] (90:8) arc (90:-90:8) (90:15) arc (90:-90:15) (8,0)--(15,0);
\draw (90:8) arc (90:180:8) (90:15) arc (90:180:15) (180:8)--(180:15);
\draw (-90:8) arc (-90:-180:8) (-90:15) arc (-90:-180:15);
\fill[pattern = north east lines] (-90:8) arc (-90:90:8) -- (90:15) arc (90:-90:15) -- cycle;
}\,\,.
\]
By Corollary \ref{cor: KLM for defects + fill in the hole}, the algebra generated by $D\big(\,\tikzmath[scale=.04]{
\useasboundingbox (-8,-4) rectangle (8,8);
\draw[ultra thick](0:8) arc (0:90:8);\draw(90:8) arc (90:180:8);}\,\big)$
and $D\big(\,\tikzmath[scale=.04]{\useasboundingbox (-8,-8) rectangle (8,4);\draw[ultra thick](0:8) arc (0:-90:8);\draw(-90:8) arc (-90:-180:8);}\,\big)$
in $\bfB(H_{\mathit{ann}})$ admits a natural right action on 
$\tikzmath[scale=.04]{\fill[fill = gray!30] circle (8); \filldraw[ultra thick, pattern = north east lines](90:8) arc (90:-90:8);\draw(90:8) arc (90:-270:8);}$\,.
Since the action of $D(I)$ on $H_{\mathit{ann}}$ commutes with that of 
$D\big(\,\tikzmath[scale=.04]{
\useasboundingbox (-8,-4) rectangle (8,8);
\draw[ultra thick](0:8) arc (0:90:8);\draw(90:8) arc (90:180:8);}\,\big)\vee
D\big(\,\tikzmath[scale=.04]{\useasboundingbox (-8,-8) rectangle (8,4);\draw[ultra thick](0:8) arc (0:-90:8);\draw(-90:8) arc (-90:-180:8);}\,\big)$,
the algebra $D(I)$ also acts on
\[
\tikzmath[scale=.06]
{
\fill[fill = gray!30] (-180:8) arc (-180:180:8) -- (180:15) arc (180:-180:15) -- cycle;
\draw[ultra thick] (90:8) arc (90:-90:8) (90:15) arc (90:-90:15) (8,0)--(15,0);
\draw (90:8) arc (90:180:8) (90:15) arc (90:180:15) (180:8)--(180:15);
\draw (-90:8) arc (-90:-180:8) (-90:15) arc (-90:-180:15);
\fill[pattern = north east lines] (-90:8) arc (-90:90:8) -- (90:15) arc (90:-90:15) -- cycle;
}\,\,\,\underset{\big(D\big(\,\tikzmath[scale=.03]{
\useasboundingbox (-8,-4) rectangle (8,8);
\draw[ultra thick](0:8) arc (0:90:8);\draw(90:8) arc (90:180:8);}\,\big)\vee
D\big(\,\tikzmath[scale=.03]{\useasboundingbox (-8,-8) rectangle (8,4);\draw[ultra thick](0:8) arc (0:-90:8);\draw(-90:8) arc (-90:-180:8);}\,\big)\big)^\op}
\boxtimes\,\,\,
\tikzmath[scale=.04]{\fill[fill = gray!30] circle (8); \filldraw[ultra thick, pattern = north east lines](90:8) arc (90:-90:8);\draw(90:8) arc (90:-270:8);}
\]
By \eqref{eq: fill the whole -- defect}, the latter is isomorphic to $H_0(D)$.

When $D$ is not irreducible, write it as a sum $D_1\oplus\ldots\oplus D_n$ of irreducible defects.
We then have $H_0(D)=\bigoplus_i H_0(D_i)$, and
\[
\begin{split}
D(I) =
D\left(\,\,\,
\tikzmath[scale=.3]{
\draw[ultra thick](0,1) arc (90:-90:1);
\draw (110:1) arc (110:-110:1);
}\right)
&=\,
D\left(\,\,\,
\tikzmath[scale=.3, baseline=-8.8]{
\draw[ultra thick](0,1) arc (90:0:1) -- (.5,0);
\draw (110:1) arc (110:0:1);
}\right)
\underset{\calb(\tikz[baseline=-1.5]{\draw[ultra thick](0,0) -- (.18,0);})}
\circledast
D\left(\,\,\,
\tikzmath[scale=.3, baseline=10]{
\draw[ultra thick](0,-1) arc (-90:0:1) -- (.5,0);
\draw (-110:1) arc (-110:0:1);
}\right)
\\
&=\,
\bigoplus_{i,j}\,\,
D_i\left(\,\,\,
\tikzmath[scale=.3, baseline=-8.8]{
\draw[ultra thick](0,1) arc (90:0:1) -- (.5,0);
\draw (110:1) arc (110:0:1);
}\right)
\underset{\calb(\tikz[baseline=-1.5]{\draw[ultra thick](0,0) -- (.18,0);})}
\circledast
D_j\left(\,\,\,
\tikzmath[scale=.3, baseline=10]{
\draw[ultra thick](0,-1) arc (-90:0:1) -- (.5,0);
\draw (-110:1) arc (-110:0:1);
}\right)
\end{split}
\]
The subalgebra $D_i(I\cap S^1_\top)\otimes_{\alg}D_j(I\cap S^1_\bot)\subset D(I)$
acts as zero on $H_0(D_k)$ unless $i=j=k$, in which case the first part of the proof applies and it extends to a normal action of $D_i(I)$ on $H_0(D_i)$.
Thus the action of $D(I\cap S^1_\top)\otimes_{\alg}D(I\cap S^1_\bot)=\bigoplus_{i,j}D_i(I\cap S^1_\top)\otimes_{\alg}D_j(I\cap S^1_\bot)$
on $\bigoplus H_0(D_i)$ extends to a normal action of $D(I)$.
\end{proof}

\section{A characterization of dualizable conformal nets}\label{sec:dualizablenets}

\subsection{Involutions on nets, defects, sectors, and intertwiners}\label{sec: star structures}

The $3$-category $\CN$ is equipped with four antilinear involutions 
\begin{tikzpicture}{\useasboundingbox (-.12,-.12)rectangle(.12,.18);\node {${}^*$};}\end{tikzpicture},
\begin{tikzpicture}{\useasboundingbox (-.12,-.12)rectangle(.12,.18);\node {$\bar{}$};}\end{tikzpicture}, 
\begin{tikzpicture}{\useasboundingbox (-.12,-.12)rectangle(.12,.18);\node {${}^\dagger$};}\end{tikzpicture}, 
\begin{tikzpicture}{\useasboundingbox (-.15,-.12)rectangle(.15,.18);\node {${}^\op$};}\end{tikzpicture},
where the $i$th involution is contravariant at the level of $(4-i)$-morphisms, and covariant at all other levels.  The second and third involutions will provide adjoints for finite sectors and defects respectively, and the fourth involution will provide the dual of a conformal net---that the involutions do indeed give adjoints, respectively duals, is proven in Section~\ref{sec:finitenetsdualizable}.

The first involution 
\begin{tikzpicture}{\useasboundingbox (-.12,-.12)rectangle(.12,.18);\node {${}^*$};}\end{tikzpicture}
acts trivially on the $0$, $1$, and $2$-morphisms, and sends a $3$-morphism $f:H\to K$ to its adjoint $f^*:K\to H$ (in the sense of maps between Hilbert spaces).

The second one
\begin{tikzpicture}{\useasboundingbox (-.12,-.12)rectangle(.12,.18);\node {$\bar{}$};}\end{tikzpicture}
acts trivially on $0$ and on $1$-morphisms.
It sends a $D$-$E$-sector $(H,\{\rho_I\})$, where the homomorphisms $\rho_I$ are given by
\begin{alignat*}{7}
&\rho_I \colon \cala(I) \to \bfB(H)\,\,\,
&\text{for}\quad I \in \INT_{\!S^1\!,\circ}\qquad\quad
&\rho_I \colon D(I)\to \bfB(H)\,\,\, 
&&\text{for}\quad I \in \INT_{\!S^1\!,\top}\\
&\rho_I \colon \calb(I)\to \bfB(H)\,\,\, 
&\text{for}\quad I \in \INT_{\!S^1\!,\bullet}\qquad\quad
&\rho_I \colon E(I)\to \bfB(H)\,\,\,
&&\text{for}\quad I \in \INT_{\!S^1\!,\bot}
\end{alignat*}  
to the complex conjugate Hilbert space $\bar H$
and $E$-$D$-sector structure given by
\begin{equation}\label{eq:  pencil star}
\begin{matrix}
\bar\rho_I \colon \cala(I) \to \bfB(\bar H)\,\,\,
\text{for}\,\,\, I \in \INT_{\!S^1\!,\circ}\qquad
\bar\rho_I \colon E(I)\to \bfB(\bar H)\,\,\, 
\text{for}\,\,\, I \in \INT_{\!S^1\!,\top}\\[1.4mm]
\bar\rho_I \colon \calb(I)\to \bfB(\bar H)\,\,\, 
\text{for}\,\,\, I \in \INT_{\!S^1\!,\bullet}\qquad
\bar\rho_I \colon D(I)\to \bfB(\bar H)\,\,\,
\text{for}\,\,\, I \in \INT_{\!S^1\!,\bot}
\end{matrix}
\end{equation}
where $\bar \rho_I (a):=\rho_{j(I)}(j_*(a^*))$,
and $j:z\mapsto \bar z$ is the reflection in the horizontal axis.
Here, $j_*$ stands for either $\cala(j)$, $E(j)$, $\calb(j)$, or $D(j)$.
The involution \begin{tikzpicture}{\useasboundingbox (-.12,-.12)rectangle(.12,.18);\node {$\bar{}$};}\end{tikzpicture} sends a $3$-morphism $f:H\to K$ to its complex conjugate $\bar f:\bar H\to \bar K$.

The third involution
\begin{tikzpicture}{\useasboundingbox (-.12,-.12)rectangle(.12,.18);\node {${}^\dagger$};}\end{tikzpicture}
acts trivially on objects.
Given a bicolored interval $I$, let $I^{\rev}$ denote the same interval with reversed bicoloring, that is, $(I^{\rev})_\circ = I_\bullet$ and $(I^{\rev})_\bullet = I_\circ$.
The orientation of $I^{\rev}$ is the same as that of $I$, but the local coordinate is negated.
The reversed defect of ${}_\cala D_\calb$ is the defect ${}_\calb D^{\dagger}\!\!{}_\cala$ defined by $D^{\dagger}(I) = D(I^{\rev})$.
For a $D$-$E$-sector $H$, the corresponding $D^\dagger$-$E^\dagger$-sector $H^\dagger$ is the complex conjugate of $H$, with structure maps
\begin{alignat*}{7}
&\rho_I^\dagger \colon \calb(I) &\,\to \bfB(H^\dagger)\,\,\,
&\text{for}\,\,\, I \in \INT_{\!S^1\!,\circ}\qquad\quad
&\rho_I^\dagger \colon D^\dagger(I)&\to \bfB(H^\dagger)\,\,\, 
&&\text{for}\,\,\, I \in \INT_{\!S^1\!,\top}\\
&\rho_I^\dagger \colon \cala(I)&\,\to \bfB(H^\dagger)\,\,\, 
&\text{for}\,\,\, I \in \INT_{\!S^1\!,\bullet}\qquad\quad
&\rho_I^\dagger \colon E^\dagger(I)&\to \bfB(H^\dagger)\,\,\,
&&\text{for}\,\,\, I \in \INT_{\!S^1\!,\bot}
\end{alignat*}
given by $\rho^\dagger_I (a)=\rho_{r(I)}(r_*(a^*))$, where $r:z\mapsto -\bar z$ is now the vertical reflection.
$3$-morphisms are sent to their complex conjugates.

The fourth involution
\begin{tikzpicture}{\useasboundingbox (-.15,-.12)rectangle(.15,.18);\node {${}^\op$};}\end{tikzpicture}
sends $\cala\in\CN$ to the a conformal net $\cala^{\op}(I):=\cala(I)^\op$.
Similarly, it sends a morphism ${}_\cala D_\calb$ to the $\cala^\op$-$\calb^\op$-defect $D^\op(I):=D(I)^\op$.
A $D$-$E$-sector $(H,\{\rho_I\})$ is sent to the complex conjugate Hilbert space, with actions $\rho_I^\op(a^\op):=\rho_I(a^*)$.
Finally, $3$-morphisms go to their complex conjugates.

\begin{remark}\label{remark-ambi}
The existence of these four involutions ensures that any duality or adjunction in $\CN$ is automatically ambidextrous, that is, it is both a left and a right duality or adjunction.  (When we say `$X$ has ambidextrous adjoint (or dual) $Y$', we mean that $Y$ admits both the structure of a left and the structure of a right adjoint (or dual) to $X$.)
\end{remark}

\subsection{The snake interchange isomorphism for defects}

To establish, in the next section, that the reversed defect ${}_\calb D^{\dagger}\!\!{}_\cala$ is an (ambidextrous) adjoint of the defect ${}_\cala D_\calb$, we will need 
the following variant of the sector interchange isomorphism~\cite[\interchangeiso]{BDH(1*1)}.  

To simplify the maneuvers involved in this interchange isomorphism, here and for the remainder of the paper, we use a model for the vertical composition of sectors that fuses sectors along one-quarter of their boundary:
\[\tikzmath[scale=\displscale]{ 
\pgftransformxshift{15}\draw[<->] (20.1-.7,2.8) -- (20.1-.7,-2.8); 
\pgftransformxshift{15}\draw[<->] (23.2-.7,2.8) -- (23.2-.7,-2.8); 
\pgftransformxshift{510} 
\fill[spacecolor] (0,3) -- (0,15) -- (12,15) -- (12,3);
\draw (6,3) -- (0,3) -- (0,15) -- (6,15);
\draw[ultra thick](6,15) -- (12,15) -- (12,3) -- (6,3); 
\fill[spacecolor] (0,-15) -- (0,-3) -- (12,-3) -- (12,-15);
\draw (6,-15) -- (0,-15) -- (0,-3) -- (6,-3);
\draw[ultra thick](6,-3) -- (12,-3) -- (12,-15) -- (6,-15); 
\draw (6,9)node {$H$} (6,-9)node {$K$}; 
\draw[<->] (12.2-.7,2.6) -- (12.2-.7,-2.6); 
\draw[<->] (9.1-.7,2.6) -- (9.1-.7,-2.6);}
\]
This is by contrast with the model we used previously, in \cite{BDH(1*1)}, which involved fusing along half of the boundary of each sector.  The equivalence between these two fusions is discussed in Appendix~\ref{app-fusion}.

Let $\cala$, $\calb$, $\calc$ be conformal nets, let ${}_\cala D_\calb$, ${}_\calb E_\calc$, ${}_\calb F_\calc$, ${}_\cala G_\calc$ be defects, let
$H$ be an $F$-$E$-sector, and let $K$ be a $D\!\circledast_\calb \!E$\,-\,$G$\,-sector.
We are interested in two ways of evaluating the diagram
\begin{equation}\label{eq: skew glueing of sectors}
\tikzmath[scale=1.7]{\node[inner sep=5] (a) at (0,0) {$\cala$};\node[inner sep=5] (b) at (2,0) {$\calb$};\node[inner sep=5] (c) at (4,0) {$\calc$};
\draw[->] (a) .. controls (1,-1) and (3,-1) .. node[below]{$\scriptstyle G$} (c);
\draw[->] (a) -- node[below]{$\scriptstyle D$} (b);\draw[->] (b) -- node[below]
{$\scriptstyle E$} (c);
\draw[->] (b) to[out=55, in=125]node[above]{$\scriptstyle F$} (c);
\node at (2.95,.3) {$\Downarrow$};\node at (3.15,.3) {$H$};
\node at (1.95,-.4) {$\Downarrow$};\node at (2.15,-.4) {$K$};
}
\end{equation}
i.e., of fusing the three sectors
\(
\def\h{10}
\tikzmath[scale=.09]{
\useasboundingbox (-5.5,-23) rectangle (35,17);
\fill[spacecolor](0,0) rectangle(12,12);\draw (6,0) -- (0,0) -- (0,12) -- (6,12);\draw[thick, double](6,12) -- (12,12) -- (12,0) -- (6,0); 
\draw (-3,6) node {$\cala$}(6,15) node {$D$}(8,-3) node {$D$}(6,6)node {$\scriptstyle H_0(D)$};
\pgftransformxshift{\h}\draw(15,6) node {$\calb$};\pgftransformxshift{\h}
\pgftransformxshift{515}
\fill[spacecolor](0,0) rectangle(12,12);\draw[thick, double] (6,0) -- (0,0) -- (0,12) -- (6,12);\draw[ultra thick](6,12) -- (12,12) -- (12,0) -- (6,0); 
\draw (15,6) node {$\calc$}(6,15) node {$F$}(6,6)node {$H$} (4,-3) node {$E$};
\pgftransformyshift{-513}
\pgftransformxshift{-100}   
\fill[spacecolor](-8,0) -- (-11,12) -- (11,12) -- (8,0) -- cycle;\draw (0,0) -- (-8,0) -- (-11,12) -- (-5,12);\draw[thick, double] (-5,12) -- (5,12);\draw[ultra thick] (5,12) -- (11,12) -- (8,0) -- (0,0); 
\draw (-3,6) (15,6) node {$\calc$}(0,-3) node {$G$}(0,6)node {$K$};
\draw (-15,6) node {$\cala$}(6,-3);
}\,\,.
\)
\\
Let us name and orient the relevant intervals $I_1$, $I_2$, $\ldots$, $I_{10}$ as indicated here:
\[
\tikzmath[scale=.09]{\draw (-12,0) -- (12,0) (0,0) -- (0,12) -- (12,12) -- (12,0) -- (7,-12) -- (-7,-12) -- (-12,0) -- (-12,12) -- (0,12);
\draw [->] (-6.3,0) -- (-6.31,0);
\draw [->] (6,0) -- (6.01,0);
\draw [->] (0,6.01) -- (0,6);
\draw [->] (-12,6.01) -- (-12,6);
\draw [->] (12,6) -- (12,6.01);
\draw [->] (0,-12) -- (0.01,-12);
\draw [->] (-6.3,12) -- (-6.31,12);
\draw [->] (5.71,12) -- (5.7,12);
\draw [->] (-9.505,-6) -- (-9.5,-6.015);
\draw [->] (9.5,-6.015) -- (9.505,-6);
\draw (-6,2) node {$\scriptstyle I_1$}(2,6) node {$\scriptstyle I_2$}(6,2) node {$\scriptstyle I_3$}(-14,6) node {$\scriptstyle I_4$}
(-6,14) node {$\scriptstyle I_5$}(6,14) node {$\scriptstyle I_6$}(14.5,6) node {$\scriptstyle I_7$}(-12,-6) node {$\scriptstyle I_8$}(0,-14) node {$\scriptstyle I_9$}(12.5,-6) node {$\scriptstyle I_{10}$};}\,\,.
\]
All of them are copies of the standard interval $[0,1]$.
Let also $S_l:=\bar I_1\cup \bar I_2\cup I_5\cup I_4$, $S_r:= I_2\cup I_3\cup I_7\cup I_6$,
$S_b:= I_8\cup I_9\cup I_{10}\cup \bar I_3\cup I_1$,
$S_{lr}:= \bar I_1\cup I_3\cup I_7\cup I_6\cup I_5\cup I_4$,
$S_{lb}:=I_8\cup I_9\cup I_{10}\cup \bar I_3\cup \bar I_2\cup I_5\cup I_4$, and
$S_{lrb}:=I_8\cup I_9\cup I_{10}\cup I_7\cup I_6\cup I_5\cup I_4$,
where we have used bars to indicate reverse orientation.
\begin{lemma}\label{lem: HKK = HKH}
There is a non-canonical unitary isomorphism
\begin{equation}\label{eq: HKK = HKH}
\big(H_0(S_l,D)\boxtimes_{\calb(I_1)} K\big)\underset{\calb(I_2)\vee E(I_3)}\boxtimes H
\,\,\cong\,\,
\big(H_0(S_l,D)\boxtimes_{\calb(I_2)} H\big)\underset{D(I_1)\vee E(\bar I_3)}\boxtimes K\,,
\end{equation}
equivariant with respect to $\cala(I_4)$, $D(I_5)$, $E(I_6)$, $\calc(I_7)$, $\cala(I_8)$, $G(I_9)$, and $\calc(I_{10})$.
\end{lemma}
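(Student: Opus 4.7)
The plan is to deduce this isomorphism from the sector interchange isomorphism \cite[\thminterchange]{BDH(1*1)} by recognizing both sides as presentations of the same Hilbert space associated to a single bicolored planar region, fused in two different orders from the three ingredients $H_0(S_l,D)$, $K$, and $H$.

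First, I would redraw each side pictorially. The vacuum sector $H_0(S_l,D)$ corresponds to a bicolored disk with boundary $S_l=\bar I_1\cup \bar I_2\cup I_5\cup I_4$, with $\cala$ on the left edge $I_4$, the defect $D$ along the top edge $I_5$, and $\calb$ on the two right edges $\bar I_1, \bar I_2$; the sector $K$ is placed on the lower right with its $\calb$-edge along $I_1$, its $D\circledast_\calb E$-edge along $\bar I_3$, and so on; the sector $H$ is placed on the upper right. The two sides of \eqref{eq: HKK = HKH} correspond to fusing these three regions first along a horizontal cut, then a vertical one, versus first along a vertical cut, then a horizontal one.

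Second, I would apply the interchange isomorphism \cite[\thminterchange]{BDH(1*1)} (in the stronger model of vertical composition defined in Appendix~\ref{app-fusion}, which is the one used here). The key observation is that the vacuum $H_0(S_l,D)$ has compatible normal actions of $\cala$, $\calb$, and of $D$ on its three boundary components, so the fusion over $\calb(I_1)$ on the left-hand side can be reexpressed, after fusing with $H_0(S_l,D)$, as a fusion over $D(I_1)$ on the right-hand side; this is an instance of the vacuum-absorption identity $H_0\boxtimes_\calb X\cong H_0$-composite used already in \cite[\lemvacvacvacdefects]{BDH(1*1)}. The analogous swap occurs over $I_3$ between $\calb$ and $E$. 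With these identifications in place, both sides become the same three-fold fusion of $H_0(S_l,D)$, $K$, and $H$ over an appropriate common von Neumann algebra, and the interchange isomorphism of \cite{BDH(1*1)} supplies the (non-canonical) unitary identification.

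The main obstacle will be the bookkeeping of the equivariance. One must verify that the resulting isomorphism respects the commuting actions of all seven remaining algebras $\cala(I_4)$, $D(I_5)$, $E(I_6)$, $\calc(I_7)$, $\cala(I_8)$, $G(I_9)$, $\calc(I_{10})$ on the outer boundary. This is essentially a consequence of the locality axiom \eqref{eq: orange star} together with the fact that the interchange isomorphism is equivariant for any action commuting with the fused algebras; but it requires tracing each of these actions through the various pasting identifications in the two composite fusion models, which, given the asymmetry between the left and right hand sides, is the most delicate part of the argument.
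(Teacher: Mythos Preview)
Your pictorial intuition is correct---both sides do represent the same glued planar region---but the proposal to deduce the isomorphism by a direct appeal to the interchange isomorphism \cite[\thminterchange]{BDH(1*1)} does not go through as stated. That theorem concerns the specific $2\times 2$ configuration of horizontal-then-vertical versus vertical-then-horizontal fusion of sectors; the configuration in \eqref{eq: skew glueing of sectors} is an L-shaped (``skew'') arrangement where $H_0(S_l,D)$ must be fused simultaneously with $K$ along $I_1$ and with $H$ along $I_2$, over two different algebras living on adjacent, not parallel, edges. No single application of interchange covers this, and the ``vacuum-absorption'' step you invoke (swapping $\calb(I_1)$ for $D(I_1)$ in a fusion) is not a general operation: \cite[\lemvacvacvacdefects]{BDH(1*1)} is about fusing two vacuum sectors, not about trading one fusing algebra for another in an arbitrary fusion.

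The paper's argument is genuinely different in structure. It freezes $K$ and views the desired isomorphism as a natural transformation between two functors of the variable $H$, from $E(I_2\cup I_3)$-modules to modules over the outer algebras. By \cite[\lemNTbetweenmodulecategories]{BDH(1*1)}, since $H_0(E)$ is a faithful $E(I_2\cup I_3)$-module, it suffices to construct the isomorphism for $H=H_0(E)$; naturality then gives equivariance with respect to $F(I_6\cup I_7)$ for free, rather than as a delicate verification. For $H=H_0(E)$ both sides collapse (via $L^2$-identifications and explicit diffeomorphisms $\varphi,\psi,\chi$ on the boundary circles) to pullbacks of $K$ along composable diffeomorphisms, and only at the very last step is the $1\boxtimes 1$-isomorphism $\Omega$ from \cite[\thminterchange]{BDH(1*1)} used---applied to vacuum sectors, where it is available. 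The key idea you are missing is this reduction-to-vacuum via the faithful-module lemma; without it, there is no way to place yourself in the setting where $\Omega$ actually applies.
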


\begin{proof}
For fixed $\cala$, $\calb$, $\calc$, $D$, $E$, $G$, $K$,
the desired isomorphism \eqref{eq: HKK = HKH} can be thought of as a natural transformation 
\begin{equation}\label{eq: Emod 2arr DGmod}
E\big( I_2\cup I_3
\big)\text{-modules}\,\,\,\tworarrow\,\,
D\big(I_4\cup I_5\big)\otimes_{\alg}
G\big(I_8\cup I_9\cup I_{10}\big)
\text{ -modules}
\end{equation}
between functors of the variable $H$.
The fact that \eqref{eq: HKK = HKH} commutes with the action of $F(I_6\cup I_7)$ is then encoded in the naturality of \eqref{eq: Emod 2arr DGmod}.

Since $H_0(E)$ is a faithful $E(I_2 \cup I_3)$-module, 
it is enough, by~\cite[\lemNTbetweenmodulecategories]{BDH(1*1)}, to construct the isomorphism \eqref{eq: HKK = HKH} for $H=H_0(E)$ and check that it commutes with the action of $F(I_6\cup I_7)$.
Pick involutions $\varphi\in\Diff_-(S_l)$, $\psi\in \Diff_-(S_r)$, $\chi\in \Diff_-(S_{lr})$ such that
\[
\varphi(I_4\cup I_5\cup I_2) = I_1,\quad
\psi(I_6\cup I_7) = I_2\cup I_3,\quad \chi(I_4\cup I_5\cup I_6\cup I_7)= I_1\cup I_3,
\]
and corresponding (non-canonical) unitaries $u:H_0(D)\xrightarrow{\scriptscriptstyle \cong} L^2(D(I_1))$, $v:H_0(E)\xrightarrow{\scriptscriptstyle \cong} L^2(E(I_2\cup I_3))$, and $w:H_0(D\circledast_\calb E)\xrightarrow{\scriptscriptstyle \cong} L^2(D(I_1)\vee E(I_3))$, as in~\cite[\lemnoncanonvacuumdefect]{BDH(1*1)}.
Let also
\[
\begin{split}
\alpha\,&:=  \varphi|_{I_4\cup I_5\cup I_2}\cup\, \mathrm{Id}_{I_8\cup I_9 \cup I_{10}\cup I_3}:S_{lb}\to S_{b},\\
\beta\,&:=  \psi|_{I_6\cup I_7}\cup\, \mathrm{Id}_{I_5\cup I_4\cup I_8\cup I_9 \cup I_{10}}:S_{lrb}\to S_{lb},\\
\gamma\,&:=\chi|_{I_4\cup I_5\cup I_6\cup I_7}\cup\, \mathrm{Id}_{I_8\cup I_9 \cup I_{10}}: S_{lrb}\to S_{b}.\\
\end{split}
\]
We may assume that $\varphi$, $\psi$, and $\chi$ are chosen so that $\alpha\circ \beta=\gamma$.
The isomorphism \eqref{eq: HKK = HKH} for $H=H_0(E)$ can then be written explicitly:
\[
\begin{split}
&\hspace{.1cm}\big(H_0(D) \boxtimes_{D(I_1)} K\big)\boxtimes_{E(I_2\cup I_3)} H_0(E)\xrightarrow{u\otimes 1}
\big(L^2(D(I_1)) \boxtimes_{D(I_1)} K\big)\boxtimes_{E(I_2\cup I_3)} H_0(E)\\
&\hspace{1.3cm}\xrightarrow{\scriptscriptstyle \cong}\,\,
\alpha^*K\boxtimes_{E(I_2\cup I_3)} H_0(E)
\,\,\xrightarrow{1\otimes v}\,\,
\alpha^*K\boxtimes_{E(I_2\cup I_3)} L^2(E(I_2\cup I_3))\\
&\hspace{1.7cm}\,\xrightarrow{\scriptscriptstyle \cong}\,
\beta^*\alpha^* K
=\gamma^*K\,\xrightarrow{\scriptscriptstyle \cong}\,
L^2\big(D(I_1)\vee E(\bar I_3)\big) \boxtimes_{D(I_1)\vee E(\bar I_3)} K\\
&\xrightarrow{w^{-1}\otimes 1}\,H_0(D\circledast_\calb E) \boxtimes_{D(I_1)\vee E(\bar I_3)} K\,\xrightarrow{\Omega\otimes 1}\,
\big(H_0(D)\boxtimes_{\calb(I_2)} H_0(E)\big)\boxtimes_{D(I_1)\vee E(\bar I_3)}\, K,
\end{split}
\]
where $\Omega$ denotes the ``$1 \boxtimes 1$-isomorphism'' constructed in~\cite[\thminterchange]{BDH(1*1)}.
\end{proof}

Generalizing \eqref{eq: skew glueing of sectors}, we now consider this situation:
\begin{equation}\label{eq: past diag for zigzag check}
\tikzmath[scale=1.4]{\node[inner sep=5] (a) at (0,0) {$\cala$};\node[inner sep=5] (b) at (2,0) {$\calb$};\node[inner sep=5] (c) at (4,0) {$\calc$};\node[inner sep=5] (d) at (6,0) {$\cald$};
\draw[->] (a) .. controls (1,-1) and (3,-1) .. node[below]{$\scriptstyle Q$} (c);
\draw[->] (b) .. controls (3,1) and (5,1) .. node[above]{$\scriptstyle P$} (d);
\draw[->] (a) -- node[above]{$\scriptstyle D$} (b);\draw[->] (b) -- node[above]{$\scriptstyle E$} (c);\draw[->] (c) -- node[above]{$\scriptstyle F$} (d);
\node at (1.95,-.4) {$\Downarrow$};\node at (2.15,-.4) {$K$};
\node at (3.95,.4) {$\Downarrow$};\node at (4.15,.4) {$H$};
}
\end{equation}
which corresponds (using Appendix~\ref{app-fusion}) to the the following configuration of sectors:
\[
\def\h{10}
\tikzmath[scale=.09]{
\fill[spacecolor](0,0) rectangle(12,12);\draw (6,0) -- (0,0) -- (0,12) -- (6,12);\draw[thick, double](6,12) -- (12,12) -- (12,0) -- (6,0); 
\draw (-3,6) node {$\cala$}(6,15) node {$D$}(9,-3) node {$D$}(6,6)node {$\scriptstyle H_0(D)$};
\pgftransformxshift{\h}\draw(15,6) node {$\calb$};\pgftransformxshift{\h}
\pgftransformxshift{515}
\fill[spacecolor](0,0) -- (0,12) -- (14,12) -- (20,0) -- cycle;\draw[thick, double] (5,0) -- (0,0) -- (0,12) -- (7,12);\draw[ultra thick](7,12) -- (14,12) -- (20,0) -- (15,0); 
\draw[ultra thick, dotted] (5,0) -- (15,0);\draw(5,0) -- (15,0);
\draw (21,6) node {$\cald$}(7,15) node {$P$}(8,6)node {$H$} (4,-3) node {$E$};
\pgftransformyshift{-513}
\pgftransformxshift{-100}   
\fill[spacecolor](-4,0) -- (-10,12) -- (10,12) -- (10,0) -- cycle;\draw (3,0) -- (-4,0) -- (-10,12) -- (-4.5,12);\draw[thick, double] (-4.5,12) -- (5,12);
\draw[ultra thick, dotted] (5,12) -- (10,12) -- (10,0) -- (3,0); 
\draw (5,12) -- (10,12) -- (10,0) -- (3,0); 
\draw (3,-3) node {$Q$}(2,6)node {$K$};
\draw (-11,6) node {$\cala$}(6,-3);
\draw(13.5,6) node {$\calc$};
\pgftransformxshift{480}
\fill[spacecolor](0,0) rectangle(12,12);\draw[ultra thick, dotted] (6,0) -- (0,0) -- (0,12) -- (6,12);\draw (6,0) -- (0,0) -- (0,12) -- (6,12);\draw[ultra thick](6,12) -- (12,12) -- (12,0) -- (6,0); 
\draw (15,6) node {$\cald$}(3,15) node {$F$}(6,-3) node {$F$}(6,6)node {$\scriptstyle H_0(F)$};
}\,\,.
\]
We name the relevant intervals $I_1$, $I_2$, $\ldots$, $I_{13}$:
\begin{equation}\label{eq: I_1 . . . . . . . I_13}
\tikzmath[scale=.09]{\draw (-12,0) -- (24,0) (0,0) -- (0,12) -- (18,12) -- (24,0) -- (24,-12) -- (12,-12) -- (12,0)(12,-12) -- (-6,-12) -- (-12,0) -- (-12,12) -- (0,12);
\draw (-5.7,2) node {$\scriptstyle I_1$}
(2,6) node {$\scriptstyle I_2$}
(6,2) node {$\scriptstyle I_3$}
(14.3,-6) node {$\scriptstyle I_4$}
(18,2) node {$\scriptstyle I_5$}
(-14,6) node {$\scriptstyle I_6$}
(-5.7,14) node {$\scriptstyle I_7$}
(10,14) node {$\scriptstyle I_8$}
(24,6) node {$\scriptstyle I_9$}
(-12,-6) node {$\scriptstyle I_{10}$}
(3,-14) node {$\scriptstyle I_{11}$}
(18,-14) node {$\scriptstyle I_{12}$}
(27,-6) node {$\scriptstyle I_{13}$};
\draw [->] (-6.3,0) -- (-6.31,0);
\draw [->] (17.8,0) -- (17.79,0);
\draw [->] (18.3,-12) -- (18.31,-12);
\draw [->] (6,0) -- (6.01,0);
\draw [->] (12,-6) -- (12,-6.01);
\draw [->] (24,-5.51) -- (24,-5.5);
\draw [->] (0,6.01) -- (0,6);
\draw [->] (-12,6.01) -- (-12,6);
\draw [->] (3.3,-12) -- (3.31,-12);
\draw [->] (-6.3,12) -- (-6.31,12);
\draw [->] (8.01,12) -- (8,12);
\draw [->] (-9.005,-6) -- (-9,-6.01);
\draw [->] (21.005,6) -- (21,6.01);
}\,\,
\end{equation}
Once again, all these intervals are copies of the standard interval $[0,1]$.
\begin{lemma}\label{lem: zig-zag for sectors}
Let  $\cala$, $\calb$, $\calc$, $\ldots$, $K$ be as in \eqref{eq: past diag for zigzag check}.
Then there is a non-canonical unitary isomorphism
\begin{equation}\label{eq: HKK = HKH -- now bigger}
\begin{split}
&H_0(D)\underset{D(I_1)\vee \calb(I_2)}\boxtimes\Big(K\,\boxtimes_{E(I_3)} H\Big)\underset{\calc(I_4)\vee F(I_5)}\boxtimes H_0(F)
\\&\qquad\qquad\cong\quad
\big(H_0(D)\,\boxtimes_{\calb(I_2)} H\big)\underset{D(I_1)\vee E(\bar I_3)\vee F(I_5)}\boxtimes \big(K\,\boxtimes_{\calc(I_4)} H_0(F)\big)
\end{split}
\end{equation}
that is equivariant with respect to the actions of the algebras $\cala(I_6)$, $D(I_7)$, $P(I_8)$, $\cald(I_9)$, $\cala(I_{10})$, $Q(I_{11})$, $F(I_{12})$, and $\cald(I_{13})$.
\end{lemma}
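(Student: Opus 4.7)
The plan is to deduce this four-net, two-sector interchange from the three-net case (Lemma \ref{lem: HKK = HKH}) by viewing the rightmost fusion with $H_0(F)$ on both sides as additional equivariant data. I would mimic the proof of Lemma \ref{lem: HKK = HKH} in the present setting rather than attempt to invoke it as a black box, since $H$ here is not a sector between two parallel defects but rather a 2-cell filling a triangular region involving three different defects $E$, $F$, and $P$; a direct reduction is delicate, and reproducing the argument with one extra $L^2$-identification is cleaner.

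First, by a naturality argument analogous to the one in the proof of Lemma \ref{lem: HKK = HKH} and based on \cite[\lemNTbetweenmodulecategories]{BDH(1*1)}, I would reduce to checking the isomorphism for a faithful universal choice of $H$, in which $H$ is identified with the standard-form module over an appropriate composite defect algebra. Next, I would choose color-preserving involutions of the relevant bicolored circles with sub-involutions $\alpha$, $\beta$, $\gamma$ satisfying $\alpha \circ \beta = \gamma$, exactly as in the proof of Lemma \ref{lem: HKK = HKH}. These yield non-canonical unitaries identifying $H_0(D)$ and $H_0(F)$ with standard-form modules over appropriate composite algebras (via \cite[\lemnoncanonvacuumdefect]{BDH(1*1)}), together with a further unitary implementing a vacuum-sector identification for the composite defect involving $D$, $E$, and $F$. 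The isomorphism \eqref{eq: HKK = HKH -- now bigger} is then constructed as a composite: apply these identifications to collapse both vacuum-sector fusions on the LHS into $L^2$-fusions that simplify via the identity bimodule; rearrange the resulting single large expression using the diffeomorphism $\gamma$; and finally reintroduce $H_0(D)$ and $H_0(F)$ on the RHS via the $1 \boxtimes 1$-isomorphism $\Omega$ of \cite[\thminterchange]{BDH(1*1)}, applied once on each side.

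Equivariance with respect to the eight outer algebras $\cala(I_6)$, $D(I_7)$, $P(I_8)$, $\cald(I_9)$, $\cala(I_{10})$, $Q(I_{11})$, $F(I_{12})$, and $\cald(I_{13})$ then follows step by step because each stage of the composite is manifestly equivariant on intervals disjoint from the fusion interval it manipulates. The main obstacle will be the bookkeeping of the thirteen intervals in \eqref{eq: I_1 . . . . . . . I_13} and, more substantively, the careful choice of the involutions $\alpha$, $\beta$, $\gamma$: they must simultaneously fix enough of the boundary to make the resulting isomorphism equivariant under all eight outer algebras, while still realizing the required interior reshuffle. Arranging these choices consistently is the one place where genuine care, and not merely a routine translation of Lemma \ref{lem: HKK = HKH}, is required; once a compatible choice is made, the rest of the argument is a direct, if tedious, generalization of the three-net case.
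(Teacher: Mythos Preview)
Your approach is plausible but takes a genuinely different route from the paper, and rests on a misconception about how hard the black-box reduction is.

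The paper does \emph{not} redo the involution argument. Instead, it treats the isomorphism as a natural transformation in \emph{two} variables, $H$ and $K$ simultaneously, and invokes \cite[\lemNTbetweenmodulecategories]{BDH(1*1)} to reduce to the single pair $\big(H=H_0(E\circledast_\calc F),\,K=H_0(D\circledast_\calb E)\big)$. At that point, the $1\boxtimes 1$-isomorphism $\Omega$ is used to split $H_0(E\circledast_\calc F)$ as $H_0(E)\boxtimes_\calc H_0(F)$, after which Lemma~\ref{lem: HKK = HKH} applies \emph{directly as a black box}---twice, with an $\Omega^{-1}$ and a rearrangement in between. No new involutions $\alpha,\beta,\gamma$ are chosen, and no thirteen-interval bookkeeping is redone; the proof is a short chain of already-established isomorphisms.

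Your worry that ``$H$ here is not a sector between two parallel defects\ldots a direct reduction is delicate'' is exactly what the paper circumvents: once $H$ is specialized to $H_0(E\circledast_\calc F)$ and split via $\Omega$, the inner piece $H_0(E)$ \emph{is} a sector of the right shape for Lemma~\ref{lem: HKK = HKH}. Your one-variable reduction followed by a from-scratch involution argument would likely work, but buys you more bookkeeping and a genuine risk in arranging compatible involutions on the larger circle. The paper's two-variable reduction plus the $\Omega$-splitting trick is both shorter and more modular, and is worth internalizing as the intended mechanism here.
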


\begin{proof}
Fix $\cala$, $\calb$, $\calc$, $\cald$, $D$, $E$, $F$.
We shall construct a natural transformation
\[
\begin{split}
(E\circledast_\calc F)\big(\,
\tikzmath[scale=\textscale]{\draw (6,0) -- (0,0) -- (0,12);\draw[ultra thick]  (12,0) -- (6,0); \draw[->] (4.5,0) -- (4.51,0);}\,\big)\text{-modules}
\,\,\,\times&\,\,\, (D\circledast_\calb E)\big(\,
\tikzmath[scale=\textscale]{\draw(6,12) -- (0,12);\draw[ultra thick] (6,12) -- (12,12) -- (12,0); \draw[thick, ->] (7.01,12) -- (7,12);}\,\big)\text{-modules}
\\\,\,\,\tworarrow\,\,&
D\big(I_7\cup I_6\big)\otimes_{\alg}
F\big(I_{12}\cup I_{13}\big)
\text{ -modules},
\end{split}
\]
where 
$(E\circledast_\calc F)\big(\,
\tikzmath[scale=\textscale]{\draw (6,0) -- (0,0) -- (0,12);\draw[ultra thick]  (12,0) -- (6,0); \draw[->] (4.5,0) -- (4.51,0);}\,\big) = E(I_2\cup I_3)\vee F(\bar I_5)$
and 
$(D\circledast_\calb E)\big(\,
\tikzmath[scale=\textscale]{\draw(6,12) -- (0,12);\draw[ultra thick] (6,12) -- (12,12) -- (12,0); \draw[thick, ->] (7.01,12) -- (7,12);}\,\big) = D( I_1)\vee E(\bar I_3\cup \bar I_4)$, as in~\cite[Def.\,1.43]{BDH(1*1)}.
The isomorphism \eqref{eq: HKK = HKH -- now bigger} is the value of that natural transformation on the object $(H,K)$.

By~\cite[\lemNTbetweenmodulecategories]{BDH(1*1)}, it is enough to construct the above natural transformation for the pair $(H=H_0(E\circledast_\calc F), K=H_0(D\circledast_\calb E))$. In that case, it is given by
\begin{gather*}
H_0(D)\underset{D(I_1)\vee \calb(I_2)}\boxtimes\Big(H_0(D\circledast_\calb E)\,\underset{E(I_3)}\boxtimes H_0(E\circledast_\calc F)\Big)\underset{\calc(I_4)\vee F(I_5)}\boxtimes H_0(F)\\
\xrightarrow{\scriptstyle 1\otimes 1\otimes \Omega\otimes 1}\\
H_0(D)\underset{D(I_1)\vee \calb(I_2)}\boxtimes\Big(H_0(D\circledast_\calb E)\,\underset{E(I_3)}\boxtimes H_0(E)\,\underset\calc\boxtimes\, H_0(F)\Big)\underset{\calc(I_4)\vee F(I_5)}\boxtimes H_0(F)\\
\cong \bigg(H_0(D)\underset{D(I_1)\vee \calb(I_2)}\boxtimes\Big(H_0(D\circledast_\calb E)\,\underset{E(I_3)}\boxtimes H_0(E)\Big)\bigg)\underset\calc\boxtimes\, H_0(F)\underset{\calc(I_4)\vee F(I_5)}\boxtimes H_0(F)\phantom{\cong}\\
\xrightarrow{\text{Lemma \ref{lem: HKK = HKH} }\otimes 1\otimes 1}\\
\bigg(\Big(H_0(D)\underset{\calb(I_2)}\boxtimes H_0(E)\Big)\underset{D(I_1)\vee E(\bar I_3)}\boxtimes H_0(D\circledast_\calb E) \bigg)\underset\calc\boxtimes\, H_0(F)\underset{\calc(I_4)\vee F(I_5)}\boxtimes H_0(F)\\
\cong \bigg(\Big(H_0(D)\underset{\calb(I_2)}\boxtimes H_0(E)\,\underset\calc\boxtimes\, H_0(F)\Big)\underset{D(I_1)\vee E(\bar I_3)}\boxtimes H_0(D\circledast_\calb E) \bigg)\underset{\calc(I_4)\vee F(I_5)}\boxtimes H_0(F)\phantom{\cong}\\
\xrightarrow{\scriptstyle 1\otimes  \Omega^{-1}\otimes 1\otimes 1}\\
\cong \bigg(\Big(H_0(D)\underset{\calb(I_2)}\boxtimes H_0(E\circledast_\calc F)\Big)\underset{D(I_1)\vee E(\bar I_3)}\boxtimes H_0(D\circledast_\calb E) \bigg)\underset{\calc(I_4)\vee F(I_5)}\boxtimes H_0(F)\phantom{\cong}\\
\xrightarrow{\text{Lemma \ref{lem: HKK = HKH}}}\\
\Big(H_0(D)\underset{\calb(I_2)}\boxtimes H_0(E\circledast_\calc F)\Big)
\underset{D(I_1)\vee E(\bar I_3)\vee F(I_5)}\boxtimes 
\Big(H_0(D\circledast_\calb E) \underset{\calc(I_4)}\boxtimes H_0(F)\Big).
\end{gather*}
\end{proof}

\subsection{Finite nets are dualizable} \label{sec:finitenetsdualizable}

We investigate the relationship of finiteness and dualizability for, in turn, sectors, defects, and nets.

\subsubsection*{Dualizability for sectors}

Recall that all defects are assumed to be semisimple.
\begin{proposition} \label{prop:sectoradjoint}
A sector ${}_D H_E$ has an adjoint (necessarily ambidextrous) if and only if it is finite.  In this case, the adjoint is canonically isomorphic to ${}_E \bar H_D$.
\end{proposition}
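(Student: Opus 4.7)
The proof is a two-directional bootstrap between dualizability of $H$ as a bimodule over the half-circle algebras and dualizability of $H$ as a $2$-morphism in $\CN$; simultaneously it identifies the adjoint as $\bar H$.

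For the forward direction, assume $H$ is finite, so that ${}_{D(S^1_\top)} H_{E(S^1_\bot)^\op}$ is dualizable in the $2$-category of von Neumann algebras. Because that $2$-category carries a contravariant adjoint involution on its $2$-morphisms, any such adjoint is automatically ambidextrous, and the unique adjoint is canonically the complex-conjugate bimodule (as in the proof of Lemma~\ref{lem: H_0(-S) -- for defects}). Finiteness therefore supplies bimodule intertwiners
\[
\tilde\eta \colon L^2(D(S^1_\top)) \to H \boxtimes_{E(S^1_\bot)} \bar H, \qquad \tilde\epsilon \colon \bar H \boxtimes_{D(S^1_\top)} H \to L^2(E(S^1_\bot))
\]
satisfying the zigzag identities. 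I would propose $\bar H$, regarded as an $E$-$D$-sector via the involution $\bar{\;}$ of Section~\ref{sec: star structures}, as the desired adjoint of $H$.

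The main technical step is to promote $\tilde\eta$ and $\tilde\epsilon$ from bimodule maps to sector intertwiners, and this plan has three parts. First, I would endow the fusion products $H \boxtimes_{E(S^1_\bot)} \bar H$ and $\bar H \boxtimes_{D(S^1_\top)} H$ with canonical $D$-$D$- and $E$-$E$-sector structures, respectively, by gluing the interval-indexed actions on $H$ and on $\bar H$ using the quarter-boundary fusion model described above. Second, I would identify $L^2(D(S^1_\top))$ and $L^2(E(S^1_\bot))$ with the identity sectors $H_0(D)$ and $H_0(E)$, following~\cite[\lemnoncanonvacuumdefect]{BDH(1*1)}. Third, I would verify that $\tilde\eta$ and $\tilde\epsilon$ are equivariant for every subinterval algebra: for subintervals of $S^1_\top$ and $S^1_\bot$ this is immediate from isotony, while for bicolored intervals crossing the color-change points $\pm 1$ it follows from the coherence of the defect gluing used to construct the sector structures on the fused spaces. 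Once subinterval equivariance is established, the sector-level zigzag identities are inherited from those for the underlying bimodule maps.

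For the converse, suppose $H$ admits a sector adjoint $K$ with unit and counit intertwiners. Restricting attention to equivariance for $D(S^1_\top)$ and $E(S^1_\bot)$, these intertwiners already exhibit $K$ as a bimodule adjoint of ${}_{D(S^1_\top)} H_{E(S^1_\bot)^\op}$, so $H$ is finite. Uniqueness of bimodule adjoints then identifies $K$ canonically with $\bar H$, and ambidexterity is automatic by Remark~\ref{remark-ambi}. I expect the main obstacle to be the subinterval equivariance step in the forward direction; everything else amounts to bookkeeping between bimodule and sector structures.
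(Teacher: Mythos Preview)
Your approach matches the paper's: both directions pass between sector adjunctions and bimodule adjunctions over the half-circle algebras $D(S^1_\top)$ and $E(S^1_\bot)$, with $\bar H$ (equipped with the $E$-$D$-sector structure from the involution of Section~\ref{sec: star structures}) serving as the adjoint.

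One correction to your step~3: the points $\pm 1$ are \emph{not} the color-change points of the standard bicolored circle --- those are $\pm i$ --- so the intervals you need to worry about (the ones crossing $\pm 1$, hence passing from $S^1_\top$ into $S^1_\bot$) are monochromatic, not bicolored. Equivariance for such an interval $I$ (say white, crossing $-1$) follows from strong additivity of $\cala$, namely $\cala(I)=\cala(I\cap S^1_\top)\vee\cala(I\cap S^1_\bot)$, together with the isotony inclusions $\cala(I\cap S^1_\top)\subset D(S^1_\top)$ and $\cala(I\cap S^1_\bot)\subset D(S^1_\bot)$; no ``coherence of defect gluing'' is needed. This is why the paper can collapse your three-step plan into a single sentence (``the unit and counit bimodule intertwiners for the bimodule duality serve, in fact, as sector intertwiners''): every subinterval algebra of $S^1$ either sits inside one of $D(S^1_\top)$, $D(S^1_\bot)$ or is generated by its intersections with them, so any $D(S^1_\top)$-$D(S^1_\bot)^\op$-bimodule map between $D$-$D$-sectors is automatically a sector intertwiner. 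Note also that the paper invokes the standing semisimplicity hypothesis on $D$ and $E$ (together with \cite[\corcontragredientbimodule]{BDH(Dualizability+Index-of-subfactors)}, rather than Lemma~\ref{lem: H_0(-S) -- for defects}) to identify the bimodule dual canonically with $\bar H$.
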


\begin{proof}
If the sector ${}_D H_E$ has an adjoint ${}_E K_D$, that adjoint sector provides the (ambidextrous) adjoint ${}_{E(S^1_\top)} K_{D(S^1_\bot)^\op}$ to the bimodule ${}_{D(S^1_\top)} H_{E(S^1_\bot)^\op}$, ensuring that $H$ is finite. 

Conversely, if $H$ is a dualizable $D(S^1_\top)$-$E(S^1_\bot)^\op$-bimodule then, by~\cite[Cor.\,6.12]{BDH(Dualizability+Index-of-subfactors)} and the fact that $D$ and $E$ are semisimple,
its dual is canonically isomorphic to $\bar H$, with the $E(S^1_\bot)^\op$-$D(S^1_\top)$-bimodule structure given by $a\bar\xi b=\overline{b^*\xi a^*}$. Identify the left action of $E(S^1_\bot)^\op$ with a left action of $E(S^1_\top)$, and the right action of $D(S^1_\top)$ with a left action of $D(S^1_\bot)$ via the isomorphisms $j_*:E(S^1_\bot)^\op\to E(S^1_\top)$ and $j_*:D(S^1_\top)^\op\to D(S^1_\bot)$; then extend these actions to the structure of an $E$-$D$-sector on $\bar H$ according to~\eqref{eq:  pencil star}.  The unit and counit bimodule intertwiners for the bimodule duality serve, in fact, as sector intertwiners, providing $_E\bar H_D$ with the structure of an adjoint sector to ${}_DH_E$.
\end{proof}
\nid By Remark~\ref{remark-ambi}, we have the following:
\begin{corollary} \label{cor:sectordualizable}
A sector is dualizable if and only if it is finite.
\end{corollary}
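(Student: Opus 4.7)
The plan is to deduce this corollary almost directly from Proposition \ref{prop:sectoradjoint} together with Remark \ref{remark-ambi}. Since a sector is an $(n-1)$-morphism in the 3-category $\CN$ (where $n=3$), dualizability for a sector amounts, per the introductory definition, to having an infinite chain of left and right adjoints $\cdots\dashv H^L\dashv H\dashv H^R\dashv\cdots$; the units and counits of these adjunctions are $3$-morphisms (intertwiners), which are top-level, so no further dualizability conditions need to be verified.

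First I would handle the forward direction (dualizable $\Rightarrow$ finite): a dualizable sector in particular admits a right adjoint, so the ``only if'' part of Proposition \ref{prop:sectoradjoint} immediately yields that the sector is finite.

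For the reverse direction (finite $\Rightarrow$ dualizable), given a finite sector ${}_D H_E$, the ``if'' part of Proposition \ref{prop:sectoradjoint} produces ${}_E \bar H_D$ as an adjoint. By Remark \ref{remark-ambi}, the four involutions on $\CN$ force every such adjunction to be ambidextrous, so ${}_E\bar H_D$ serves simultaneously as a left and a right adjoint of ${}_DH_E$, and symmetrically ${}_DH_E$ serves as a left and a right adjoint of ${}_E\bar H_D$. Iterating this bidirectional adjunction gives an infinite chain
\[
\ldots \dashv H \dashv \bar H \dashv H \dashv \bar H \dashv \ldots
\]
of left and right adjoints, which is exactly what is required for dualizability.

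The only potential obstacle is confirming that no additional coherence is needed beyond the existence of the adjoint produced by Proposition \ref{prop:sectoradjoint}; but because the complex-conjugation involution on sectors squares to the identity (up to canonical isomorphism), the chain of iterated adjoints simply alternates between $H$ and $\bar H$, so no extra construction is necessary and the corollary follows.
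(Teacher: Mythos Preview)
Your proposal is correct and follows essentially the same approach as the paper: the paper simply states ``By Remark~\ref{remark-ambi}, we have the following'' before the corollary, relying on Proposition~\ref{prop:sectoradjoint} for the existence of an adjoint and on the involutions to make it ambidextrous. Your write-up just unpacks this in more detail, spelling out that sectors are $(n-1)$-morphisms so no further dualizability conditions arise on the units and counits, and that the ambidextrous adjunction yields the required infinite chain.
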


\subsubsection*{Dualizability for defects}

\def \Itopw {\tikz{\useasboundingbox (-.11,-.12) rectangle (.12,.19); \draw node {$I$} (-.08,.18)--(.12,.18);}}
\def \Itopb {\tikz{\useasboundingbox (-.11,-.12) rectangle (.12,.19); \draw[very thick] node {$I$} (-.08,.18)--(.12,.18);}}

Given a bicolored interval $I$, we define the following two bicolored manifolds $\Itopw$ and $\Itopb$.
The underlying manifold of $\Itopw$ and of $\Itopb$ are both given by $I_\circ\cup[0,1]\cup I_\bullet$,
and their bicolorings are
\[
\Itopw_\circ = [0,1],\qquad \Itopw_\bullet = I_\circ \cup I_\bullet,\qquad
\Itopb_\circ = I_\circ \cup I_\bullet,\qquad \Itopb_\bullet = [0,1].
\]
Here is an example illustrating the above concepts:
\[
\tikzmath[scale=\displscale]{\draw (-20,-7) [rounded corners=7pt]-- (-20,-1) -- (-15,10) [rounded corners=4pt]-- (-6,0) -- (0,0);
\draw[ultra thick] (0,0) [rounded corners=3pt]-- (5,0) -- (10,-3) -- (17,5) -- (22,2)(0,-13) node {$I$};}
\qquad\leadsto\qquad
\tikzmath[scale=\displscale]{\draw (-14,5) [rounded corners=7pt]-- (-14,11) -- (-9,22) [rounded corners=4pt]-- (0,12) -- (6,12)
(18,12) [rounded corners=3pt]-- (23,12) -- (28,9) -- (35,17) -- (40,14)(12,0) node {$\Itopb$};
\draw[ultra thick] (6,12) -- (18,12);}\,,
\qquad
\tikzmath[scale=\displscale]{\draw[ultra thick] (-14,5) [rounded corners=7pt]-- (-14,11) -- (-9,22) [rounded corners=4pt]-- (0,12) -- (6,12)
(18,12) [rounded corners=3pt]-- (23,12) -- (28,9) -- (35,17) -- (40,14)(12,0) node {$\Itopw$};
\draw (6,12) -- (18,12);}\,.
\end{equation*}
Let $D$ be an $\cala$-$\calb$-defect.
Definition \ref{def: D on all 1-manifolds} is made so as to provide an easy description of $D\circledast D^\dagger$ and $D^\dagger \circledast D$.
They are given by
\begin{equation}\label{eq: DDdag and DdagD}
\big(D\circledast_\calb D^\dagger\big)(I) = D(\Itopb)\qquad\text{and}\qquad
\big(D^\dagger \circledast_\cala D\big)(I) = D(\Itopw),
\end{equation}
essentially by definition.

\begin{proposition} \label{prop:defectadjoint}
Let $\cala$ and $\calb$ be finite conformal nets.  Every finite defect ${}_\cala D_\calb$ has ambidextrous adjoint ${}_\calb D^\dagger\!\!{}_\cala$, and the unit and counit sectors of both the left and right adjunctions are finite.
\end{proposition}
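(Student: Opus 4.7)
The plan is to take both the unit and counit sectors of the adjunction to be copies of the vacuum sector $H_0(D)$, suitably decorated, and to deduce the triangle identities from the snake interchange isomorphism of Lemma~\ref{lem: zig-zag for sectors}.

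First I construct the unit sector $u\colon \id_\cala\Rightarrow D\circledast_\calb D^\dagger$.  I equip the standard circle with the bicoloring in which the top half $S^1_\top$ is entirely white and the bottom half $S^1_\bot$ has the white--black--white pattern of a $\Itopb$-interval.  The vacuum Hilbert space $H_0(D)$ attached to this bicolored circle carries the usual actions of $\cala(I)$ for subintervals $I\subset S^1_\top$, and, by Proposition~\ref{prop: tricolored interval acts on H_0(D)}, a normal action of $D(\Itopb) = (D\circledast_\calb D^\dagger)(S^1_\bot)$ compatible with all the actions on subintervals.  These data exhibit $H_0(D)$ as an $\id_\cala$-$(D\circledast_\calb D^\dagger)$-sector, which I call $u$.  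The counit sector $c\colon D^\dagger \circledast_\cala D\Rightarrow \id_\calb$ is constructed symmetrically, using the bicoloring with an $\Itopw$-shape on top and an entirely black bottom half; $c$ is again $H_0(D)$ as a vector space, but with the $D(\Itopw) = (D^\dagger \circledast_\cala D)(S^1_\top)$-action provided by Proposition~\ref{prop: tricolored interval acts on H_0(D)} and the usual $\calb$-actions on the bottom half.

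Second, I verify the triangle identity $(\id_D\cdot c)\circ (u\cdot \id_D)\cong \id_D$.  The quarter-boundary model of vertical composition presents this composite as the fusion of three vacuum sectors of $D$ arranged exactly in the zig-zag shape of diagram~\eqref{eq: past diag for zigzag check}, specialized to $\calc=\cala$, $\cald=\calb$, $E=D^\dagger$, $F=D$, $Q=\id_\cala$, $P=\id_\calb$, $K=u$, and $H=c$.  Applying Lemma~\ref{lem: zig-zag for sectors} yields an equivariant unitary between the two bracketings of this fusion.  On the interchanged side, the partial fusions $H_0(D)\boxtimes_{\calb(I_2)} H_0(D)$ and $H_0(D)\boxtimes_{\cala(I_4)} H_0(D)$ are vacuum sectors of $D^\dagger\circledast_\cala D$ and $D\circledast_\calb D^\dagger$ respectively, by the vacuum-fuses-with-vacuum principle of~\cite[\lemvacvacvacdefects]{BDH(1*1)}, and their further fusion reduces once more to $H_0(D) = \id_D$.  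The triangle identity for $D^\dagger$ follows by the white--black symmetry of the construction, using the reverse bicoloring throughout.

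Finally, I check finiteness.  Regarded as a bimodule under the actions coming from $S^1_\top$ and $S^1_\bot$, the sector $u=H_0(D)$ has the algebras $\cala(S^1_\top)$ on the left and $D(\Itopb)^\op = (D\circledast_\calb D^\dagger)(S^1_\bot)^\op$ on the right.  Refining the algebra actions to the four subintervals $I_1,\ldots,I_4$ identifies this, up to regrouping, with the bimodule~\eqref{eq: h0D is dualizable}, which is dualizable by the hypothesis that $D$ is a finite defect between finite nets (together with Footnote~\ref{foot-dualizable}).  Hence $u$ is a finite sector by Proposition~\ref{prop:sectoradjoint}, and the same reasoning shows that $c$ is finite.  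Ambidextrousness of the resulting adjunction is then automatic by Remark~\ref{remark-ambi}.  The main obstacle is the triangle identity: one must track carefully, through the several identifications encoded in Lemma~\ref{lem: zig-zag for sectors}, which algebras act on which factors, and verify that after collapsing the identity defects $\id_\cala$ and $\id_\calb$ in the zig-zag configuration, the remaining composite really is $H_0(D)$ itself rather than some twisted variant.
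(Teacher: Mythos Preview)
Your overall strategy---build both unit and counit as copies of $H_0(D)$ via Proposition~\ref{prop: tricolored interval acts on H_0(D)}, then verify the zig-zag with Lemma~\ref{lem: zig-zag for sectors}---is exactly the paper's. But two steps do not go through as written. With your choice of directions, the pasting $(\id_D\cdot c)\circ(u\cdot\id_D)$ is the \emph{mirror} of diagram~\eqref{eq: past diag for zigzag check}: your $u$ sits in the upper-left triangle and $c$ in the lower-right, whereas in~\eqref{eq: past diag for zigzag check} the sector $K$ is lower-left and $H$ upper-right, each pointing from the straight edges toward the curved one. Your specialization $K=u$, $H=c$ therefore does not typecheck. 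The paper instead takes $r:D\circledast_\calb D^\dagger\Rightarrow 1_\cala$ and $s:1_\calb\Rightarrow D^\dagger\circledast_\cala D$ (essentially your $\bar u$ and $\bar c$), for which the lemma applies verbatim, and then simplifies the \emph{left}-hand side of~\eqref{eq: HKK = HKH -- now bigger} by observing that $r\boxtimes_{D^\dagger(I_3)}s$ is itself a vacuum sector of $D$ and invoking~\cite[\lemvacvacvacdefects]{BDH(1*1)} three times.

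More seriously, your finiteness argument fails. Finiteness of $u$ asks that the bimodule ${}_{1_\cala(S^1_\top)}H_0(D)_{(D\circledast_\calb D^\dagger)(S^1_\bot)^\op}$ be dualizable; under the color-preserving diffeomorphism to the standard bicolored circle, the right-acting algebra becomes $D$ applied to a single white--black--white arc (as in Proposition~\ref{prop: tricolored interval acts on H_0(D)}), not $(\cala(I_2)\,\bar\otimes\,\calb(I_4))^\op$. This is genuinely different from the bimodule~\eqref{eq: h0D is dualizable}, and no ``regrouping'' identifies them. The paper instead invokes~\cite[\lemDtildeHBfinite]{BDH(1*1)}, which is precisely the dualizability of the defect vacuum sector with respect to such top/bottom boundary decompositions.
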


\begin{proof}
By Remark~\ref{remark-ambi}, it suffices to consider just one of the two adjunctions.

\def\Soneb{\tikz{\useasboundingbox (-.11,-.13) rectangle (.2,.18); \draw[very thick] (.07,0) node {$S^1$} (-.08,.17)--(.12,.17);}}
\def\SoneW{\tikz{\useasboundingbox (-.11,-.13) rectangle (.2,.18); \draw (.07,0) node {$S^1$} (-.11,-.21)--(.09,-.21);}}
Let $\Soneb$ be the bicolored manifold obtained by taking the standard circle $S^1$, cutting it open at $i\in S^1$, and then glueing in a copy of $[0,1]$.
The black part of $\Soneb$ is the interval $[0,1]$ that is added on the top, and all the rest is white.
Similarly, 
let $\SoneW$ be the bicolored manifold that is obtained by inserting a white interval at the location of $-i\in S^1$, and coloring all the rest black.
\[
\Soneb:\tikzmath[scale=\displscale]{
\draw (-5.5,10) to[out=200, in=90, looseness=.9] (-12,0) to[out=-90, in=180, looseness=1.05] (0,-10) to[out=0, in=-90] (12,0) to[out=90, in=-20, looseness=.9] (5.5,10);
\draw[ultra thick] (-5,10) to[out=20, in=160, looseness=.9] (5,10);
}\qquad\qquad 
\SoneW:\tikzmath[scale=\displscale]{
\draw[ultra thick] (-5.5,-10) to[out=-200, in=-90, looseness=.9] (-12,0) to[out=90, in=180, looseness=1.05] (0,10) to[out=0, in=90] (12,0) to[out=-90, in=20, looseness=.9] (5.5,-10);
\draw (-5,-10) to[out=-20, in=-160, looseness=.9] (5,-10);
} 
\]
By \eqref{eq: DDdag and DdagD}, a $D\circledast_\calb D^\dagger$ - $1_\cala$ -sector is the same thing as a $\{D(I)\}_{I\in\INT_{\tikz[scale=.65]{\useasboundingbox (-.11,-.13) rectangle (.22,.18); \draw[very thick] (.12,0.04) node[scale=1] {$\scriptscriptstyle S^1$} (-.08,.17)--(.12,.17);}}}
$-representation, where 
$\INT_{\tikz[scale=.65]{\useasboundingbox (-.11,-.13) rectangle (.22,.18); \draw[very thick] (.1,0.04) node[scale=1] {$\scriptstyle S^1$} (-.08,.22)--(.12,.22);}}
$
denotes the poset of subintervals $I\subset \Soneb$, $\partial I \cap \Soneb\!{}_\bullet=\emptyset$, that are allowed to contain $\Soneb\!{}_\bullet$ in their interior, but that are not allowed to contain $\Soneb\!{}_\circ$.
Pick a color preserving diffeomorphism $\varphi$ from $\Soneb$ to the standard bicolored circle.
By Proposition \ref{prop: tricolored interval acts on H_0(D)}, 
we can use $\varphi$ to induce the structure of a $\{D(I)\}_{I\in\INT_{\tikz[scale=.65]{\useasboundingbox (-.11,-.13) rectangle (.22,.18); \draw[very thick] (.12,0.04) node[scale=1] {$\scriptscriptstyle S^1$} (-.08,.17)--(.12,.17);}}}
$-representation on $H_0(D)$.
That is the counit sector $r$ of our adjunction.
Similarly, restricting $H_0(D)$ along a color preserving diffeomorphism from $\SoneW$ to the standard bicolored circle provides a
$1_\calb$\,-\,$D^\dagger\circledast_\cala D$\,-sector $s$,
which is the unit of our adjunction.
The sectors $r$ and $s$ are finite by the finiteness of any defect vacuum sector with respect to these boundary decompositions~\cite[\lemDtildeHBfinite]{BDH(1*1)}.\footnote{This vacuum sector finiteness result~\cite[\lemDtildeHBfinite]{BDH(1*1)} was stated for irreducible defects, but it also holds for semisimple defects and in fact for arbitrary defects, using the direct integral decomposition~\cite[Lem.\,1.32]{BDH(1*1)}.}

We now have to show that $r$ and $s$ satisfy the duality equations
\[
\left(\tikzmath[scale=.9]{
\useasboundingbox (-.3,-1) rectangle (6.3,1);
\node (a) at (0,0) {$\cala$};\node (b) at (2,0) {$\calb$};\node (c) at (4,0) {$\cala$};\node (d) at (6,0) {$\calb$};
\draw[->] (a) -- node[above, yshift=-.1]{$\scriptstyle D$} (b);\draw[->] (a) to[out=-45, in=-135]node[below]{$\scriptstyle 1_\cala$} (c);\draw[->] (c) -- node[above, yshift=-.1]{$\scriptstyle D$} (d);
\draw[->] (b) to[out=45, in=135]node[above]{$\scriptstyle 1_\calb$} (d);\draw[->] (b) -- node[above, yshift=-.1, xshift=.3] {$\scriptstyle D^\dagger$} (c);
\node at (2,-.52) {$\Downarrow$};\node at (2.3,-.52) {$r$};\node at (4,.52) {$\Downarrow$};\node at (4.3,.52) {$s$};
}
\right)
\tikzmath{\useasboundingbox (-.6,-1.2) rectangle (.6,1.2);\node{$\cong$};}
\left(
\tikzmath[scale=.9]{ \useasboundingbox (-.3,-1) rectangle (4.1,1);
\node (a') at (0,0) {$\cala$};\node (c') at (3.8,0) {$\calb$};
\draw[->] (a') .. controls (1,.7) and (2.8,.7) .. node[above]{$\scriptstyle D$} (c');
\draw[->] (a') .. controls (1,-.7) and (2.8,-.7) .. node[below]{$\scriptstyle D$} (c');
\node at (1.9,0) {$\Downarrow$};\node at (2.4,0) {$\scriptstyle 1_{D}$};}
\right)
\]
and
\[
\;\left(\tikzmath[scale=.9]{
\useasboundingbox (-.3,-1) rectangle (6.3,1);
\node (a) at (0,0) {$\calb$};\node (b) at (2,0) {$\cala$};\node (c) at (4,0) {$\calb$};\node (d) at (6,0) {$\cala$};
\draw[->] (a) -- node[above, yshift=-.1, xshift=.3]{$\scriptstyle D^\dagger$} (b);\draw[->] (a) to[out=45, in=135]node[above]{$\scriptstyle 1_\calb$} (c);\draw[->] (c) -- node[above, yshift=-.1, xshift=.3]{$\scriptstyle D^\dagger$} (d);
\draw[->] (b) to[out=-45, in=-135]node[below]{$\scriptstyle 1_\cala$} (d);\draw[->] (b) -- node[above, yshift=-.1] {$\scriptstyle D$} (c);
\node at (2,.52) {$\Downarrow$};\node at (2.3,.52) {$s$};\node at (4,-.52) {$\Downarrow$};\node at (4.3,-.52) {$r$};
}
\right)
\tikzmath{\useasboundingbox (-.6,-1.2) rectangle (.6,1.2);\node{$\cong$};}
\left(
\tikzmath[scale=.9]{ \useasboundingbox (-.3,-1) rectangle (4.1,1);
\node (a') at (0,0) {$\calb$};\node (c') at (3.8,0) {$\cala$};
\draw[->] (a') .. controls (1,.7) and (2.8,.7) .. node[above]{$\scriptstyle D^\dagger$} (c');
\draw[->] (a') .. controls (1,-.7) and (2.8,-.7) .. node[below]{$\scriptstyle D^\dagger$} (c');
\node at (1.9,0) {$\Downarrow$};\node at (2.4,0) {$\scriptstyle 1_{D^\dagger}$};}
\right).
\]
We only check the first equation, the second one being completely analogous. 
Let $I_1, \ldots, I_{13}$ be as in \eqref{eq: I_1 . . . . . . . I_13}.
By Lemma \ref{lem: zig-zag for sectors} and Appendix~\ref{app-fusion},
the left hand side
$\tikzmath[scale=.5]{
\useasboundingbox (-.3,-1) rectangle (6.3,1);
\node (a) at (0,0) {$\scriptstyle \cala$};\node (b) at (2,0) {$\scriptstyle \calb$};\node (c) at (4,0) {$\scriptstyle \cala$};\node (d) at (6,0) {$\scriptstyle \calb$};
\draw[->] (a) -- (b);\draw[->] (a) to[out=-40, in=-140] (c);\draw[->] (c) --  (d);
\draw[->] (b) to[out=40, in=140] (d);\draw[->] (b) --  (c);
\node at (2,-.55) {$\scriptstyle \Downarrow$};\node at (4,.55) {$\scriptstyle \Downarrow$};
}
$
is isomorphic to
\begin{equation}\label{eq: HrsH}
H_0(D)\underset{D(I_1\cup I_2)}\boxtimes\Big(r\underset{D^\dagger(I_3)}\boxtimes s\Big)\underset{D(I_4\cup I_5)}\boxtimes H_0(D).
\end{equation}
Because the fusion of two vacuum sectors for a defect is again a vacuum sector for that same defect~\cite[\lemvacvacvacdefects]{BDH(1*1)},
the middle term $\,r\boxtimes_{D^\dagger(I_3)} s\,$ is
the vacuum sector of $D$ associated to $I_1 \cup I_{10} \cup I_{11} \cup \bar I_4 \cup \bar I_5 \cup I_9 \cup I_8 \cup I_2$.
By two more applications of that same lemma, we identify \eqref{eq: HrsH} with the identity sector on $D$.
\end{proof}

\nid Recall that all conformal nets and defects are assumed to be semisimple.  Combining the above proposition with Corollary~\ref{cor:sectordualizable}, we have the following.

\begin{corollary} \label{cor:defectdualizable}
Every finite defect between finite conformal nets is fully dualizable.
\end{corollary}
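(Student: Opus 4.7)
The plan is to verify the inductive conditions for full dualizability of a $1$-morphism in the $3$-category $\CN$ by assembling Proposition~\ref{prop:defectadjoint}, Corollary~\ref{cor:sectordualizable}, and the ambidexterity observation of Remark~\ref{remark-ambi}. Since $n=3$, the recursion terminates once one reaches sectors: for an $(n-1)$-morphism, full dualizability reduces by definition to the mere existence of an infinite chain of left and right adjoints.

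First I would produce the infinite chain $\cdots \dashv D \dashv D^\dagger \dashv D \dashv D^\dagger \dashv \cdots$ of defect adjoints. Proposition~\ref{prop:defectadjoint} supplies $D^\dagger$ as an ambidextrous adjoint of $D$, and by Remark~\ref{remark-ambi} that same adjunction data simultaneously exhibits $D$ as an ambidextrous adjoint of $D^\dagger$. To continue iterating, I would observe that the $\dagger$-involution preserves the hypothesis of the proposition: $D^\dagger$ is again a finite defect between finite conformal nets, since reversing the bicoloring does not affect the splitting condition on the vacuum sector in Definition~\ref{def: finiteness for defects}. Thus Proposition~\ref{prop:defectadjoint} applies equally to $D^\dagger$, yielding $D^{\dagger\dagger}\cong D$ as its ambidextrous adjoint, and so on; the alternating chain closes up on itself.

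Second, I would check that every unit and every counit sector appearing anywhere in this chain is dualizable as a $2$-morphism of $\CN$. This is where the two main preceding inputs fit together: Proposition~\ref{prop:defectadjoint} explicitly asserts that all unit and counit sectors of the adjunctions $D \dashv D^\dagger$ and $D^\dagger \dashv D$ are finite, and Corollary~\ref{cor:sectordualizable} then says that a finite sector is dualizable. Because sectors are $(n-1)$-morphisms in $\CN$, dualizability is the final condition demanded by the definition, and the recursion stops. Assembling these pieces establishes full dualizability of $D$.

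I do not anticipate any substantive obstacle; the corollary is essentially a bookkeeping assembly of the preceding two results together with the ambidexterity supplied by the four involutions of Section~\ref{sec: star structures}. The only point requiring a moment's thought is the stability of finiteness under $\dagger$, which, as noted above, is immediate from the definitions and requires no further argument.
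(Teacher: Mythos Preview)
Your proposal is correct and follows essentially the same route as the paper: the paper simply states that the corollary follows by combining Proposition~\ref{prop:defectadjoint} with Corollary~\ref{cor:sectordualizable}, which is exactly what you do. One minor remark: your separate verification that $D^\dagger$ is again finite, in order to reapply Proposition~\ref{prop:defectadjoint}, is not strictly needed, since Remark~\ref{remark-ambi} already forces the entire chain $\cdots\dashv D\dashv D^\dagger\dashv D\dashv\cdots$ to be periodic with the same unit and counit sectors (up to the involutions), all of which are declared finite in Proposition~\ref{prop:defectadjoint}; but this redundancy does no harm.
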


\subsubsection*{Dualizability for conformal nets}

In~\cite{BDH(3-category)}, we constructed a $3$-category whose objects are finite conformal nets, whose morphisms are defects, whose $2$-morphisms are sectors, and whose $3$-morphisms are intertwiners.\footnote{Insisting that the conformal nets be finite allowed us to prove that the composition of two defects is again a defect; we do not know if the composition of defects between arbitrary conformal nets is a defect, in particular whether the composite satisfies the vacuum sector axiom~\cite[Def.\,1.7, axiom (iv)]{BDH(1*1)}.}  If $\cala$ and $\calb$ are conformal nets that are not-necessarily finite, then, even though we do not know that they live in a $3$-category, we can still make sense of $\cala$ and $\calb$ being dual: specifically, $\calb$ is the right dual of $\cala$ if there exist unit and counit defects 
${}_{\cala \otimes \calb}\raisebox{.5ex}{$r$}_{\underline\IC}$ and 
${}_{\underline\IC}\,\raisebox{.5ex}{$s$}_{\calb \otimes \cala}$
such that
$(1_\cala \otimes s) \circledast_{\cala\otimes\calb \otimes\cala} (r \otimes 1_\cala)$
and
$(s \otimes 1_{\calb}) \circledast_{\calb\otimes\cala \otimes\calb} ( 1_{\calb} \otimes r)$
are defects, and are equivalent (in the $2$-category of $\cala$-$\cala$-defects or $\calb$-$\calb$-defects)
to the identity defects on $\cala$ and $\calb$, respectively.

\begin{theorem}\label{thm:finitenetdualizable}
An arbitrary conformal net $\cala$ has ambidextrous dual $\cala^\op$.  If $\cala$ is finite, then the unit and counit defects of both the left and right dualities are themselves finite.
\end{theorem}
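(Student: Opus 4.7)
The plan is to construct the unit and counit defects from $\cala$ itself, verify the snake identities via the bicolored-$1$-manifold formalism developed in Section~1, and then check finiteness in the finite case. By Remark~\ref{remark-ambi}, ambidexterity of the duality is automatic once one side is established.

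\textbf{Construction.} Modelled on the fact that a von Neumann algebra $M$ has dual $M^\op$ with evaluation and coevaluation bimodules extracted from $L^2(M)$, the candidates for the counit and unit defects $r:\cala\otimes\cala^\op \to \underline\IC$ and $s:\underline\IC \to \cala^\op\otimes\cala$ are built directly from the algebras $\cala(I)$ and the vacuum sectors of $\cala$. The key point is the treatment on genuinely bicolored intervals: I would use the extension of defects to arbitrary bicolored $1$-manifolds (Definition~\ref{def: D on all 1-manifolds}) to define and glue the algebras consistently across color-change points. The isotony, locality, and vacuum-sector axioms for $r$ and $s$ would be inherited from the corresponding axioms for $\cala$.

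\textbf{Snake identities.} Using the identifications~\eqref{eq: DDdag and DdagD}, which rewrite the composite of a defect with its reverse as the value of the defect on a tricolored interval, I would reinterpret the snake composite $(1_\cala\otimes s)\circledast_{\cala\otimes\cala^\op\otimes\cala}(r\otimes 1_\cala)$ as the value of $\cala$ on a bicolored $1$-manifold with two additional color-change points. The decomposition-invariance of the extension (the generalization, discussed just after Definition~\ref{def: D on all 1-manifolds}, of \cite[Cor.\,1.13]{BDH(modularity)}) then identifies this composite with $\cala$ on the straightened interval, i.e., with the identity defect $1_\cala$. The other snake equation follows by a symmetric argument applied to $(s\otimes 1_{\cala^\op})\circledast(1_{\cala^\op}\otimes r)$ in the $\calb = \cala^\op$ role.

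\textbf{Finiteness of $r$ and $s$.} Assume $\cala$ is finite. To show $r$ is a finite defect, I must check that the action of $r(I_1)\otimes_\alg r(I_3)$ on the vacuum sector $H_0(r)$ extends to the spatial tensor product $r(I_1)\,\bar\otimes\, r(I_3)$. The construction arranges for $H_0(r)$ to be canonically identified with the vacuum sector $H_0(\cala)$ of $\cala$ on the underlying uncolored circle, at which point the required extension reduces to the splitness axiom $\cala(I_1)\,\bar\otimes\,\cala(I_3) = \cala(I_1\cup I_3)$ for the finite net $\cala$. The argument for $s$ is identical.

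\textbf{Main obstacle.} The principal difficulty is not the construction per se but the bookkeeping required for the snake identities: carefully tracking the color-change data and the canonical isomorphisms $\cala^{\op\op}\cong\cala$ and $D^{\dagger\dagger}\cong D$ through the relative fusions $\circledast$. The bicolored-$1$-manifold formalism of Section~1, together with the decomposition-invariance of that extension, is precisely what makes this tracking manageable; the finiteness statement, by contrast, is a direct translation of the finiteness of $\cala$.
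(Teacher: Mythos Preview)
Your overall strategy matches the paper's, but the snake-identity step has a real gap.

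Your citation of~\eqref{eq: DDdag and DdagD} and of Definition~\ref{def: D on all 1-manifolds} is misplaced: those govern the composite $D\circledast D^\dagger$ of a \emph{defect} with its reverse (used in Proposition~\ref{prop:defectadjoint}), not the duality of \emph{nets}. Here one writes $r$ and $s$ explicitly as foldings, $r(I)=\cala(I_\circ\cup_{I_{\circ\bullet}}\bar I_\circ)$ and $s(I)=\cala(\bar I_\bullet\cup_{I_{\circ\bullet}} I_\bullet)$, and computes directly (via \cite[Lem.\,1.12]{BDH(modularity)}) that on a genuinely bicolored $I$ the snake composite $(1_\cala\otimes s)\circledast(r\otimes 1_\cala)$ is $\cala$ applied to the zig-zag $I_\circ\cup[0,1]\cup\overline{[0,1]}\cup[0,1]\cup I_\bullet$.

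The gap is your appeal to ``decomposition-invariance'' to straighten this. Decomposition-invariance says that different cuttings of a \emph{fixed} $1$-manifold yield the same algebra; it does not identify $\cala$ of the zig-zag with $\cala$ of the shorter interval~$I$. The zig-zag defect is not $1_\cala$ on the nose: it is a \emph{weak unit}, and one must invoke separately that weak units are equivalent, as defects, to the identity (\cite[\remarkweakidentity\ and \exampleonenotoneone]{BDH(1*1)}). Without this step the snake equation is not established.

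Your finiteness argument is correct: under $H_0(r)\cong H_0(\cala)$ the algebras $r(I_1),r(I_3)$ act as $\cala(I_1),\cala(I_3)$ while $r(I_4)=\IC$, so the required extension is exactly the split property of $\cala$.
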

\begin{proof}
By Remark~\ref{remark-ambi}, it is enough to discuss just one of the two dualities.
We show that $\cala\dashv\cala^\op$.

Given a bicolored interval, let $I_{\circ\!\bullet}$ stands for $I_\circ\cap\, I_\bullet$.
It consists of one point if $I$ is genuinely bicolored, and it is empty otherwise. 
The counit defect ${}_{\cala \otimes \cala^\op}\raisebox{.5ex}{$r$}_{\underline\IC}$ and the unit defect 
${}_{\underline\IC}\,\raisebox{.5ex}{$s$}_{\cala^\op \otimes \cala}$
are defined by
\begin{equation}\label{eq: def of r and s defects}
r:I\mapsto \cala\big(I_\circ\cup_{I_{\circ\!\bullet}} \bar I_\circ\big)\qquad\text{and}\qquad
s:I\mapsto \cala\big(\bar I_\bullet\cup_{I_{\circ\!\bullet}} I_\bullet\big),
\end{equation}
where the bar stands for orientation reversal.
In pictures, this is:
\[
r \Big( \tikzmath[scale=.4] {\useasboundingbox (-2.2,-1.1) rectangle (2.2,1.1);\draw (-2,-.7) [rounded corners=7pt]-- (-2,-.1) -- 
(-1.5,1) [rounded corners=4pt]-- (-.6,0) -- (0,0); \draw[ultra thick] (0,0) [rounded corners=3pt]-- (.5,0) -- (1,-.3) -- (1.7,.5) -- (2.2,.2);\draw [->] (-.31,.002) -- (-.3,0);}\Big) 
:=
\cala\Big(
\tikzmath[scale=.4] {\useasboundingbox (-2.3,-1.1) rectangle (.2,1.1);
\draw (-2,-.7) [rounded corners=7pt]-- (-2,-.1) -- (-1.5,1) [rounded corners=4pt]-- (-.6,0)[rounded corners=1] -- (0,0) -- (0,.1);
\draw (-2.15,-.55) [rounded corners=7pt]-- (-2.17,.1) -- (-1.45,1.19) [rounded corners=4pt]-- (-.6,.2)[rounded corners=1] -- (0,.2) -- (0,.1);
\draw [->] (-.31,.202) -- (-.3,0.2);}
\Big)
\qquad\text{and}\qquad
s \Big( \tikzmath[scale=.4] {\useasboundingbox (-2.2,-1.1) rectangle (2.2,1.1);\draw (-2,-.7) [rounded corners=7pt]-- (-2,-.1) -- 
(-1.5,1) [rounded corners=4pt]-- (-.6,0) -- (0,0); \draw[ultra thick] (0,0) [rounded corners=3pt]-- (.5,0) -- (1,-.3) -- (1.7,.5) -- (2.2,.2);\draw [->] (-.31,.002) -- (-.3,0);}\Big) 
:=
\cala\Big(
\tikzmath[scale=.4] {\useasboundingbox (-.2,-1.1) rectangle (2.3,1.1);
\draw[rounded corners=1]  (0,.1) -- (0,0) [rounded corners=3pt]-- (.5,0) -- (1,-.3) -- (1.7,.5) -- (2.2,.2);
\draw[rounded corners=1] (0,.1) -- (0,0.2) [rounded corners=3pt]-- (.55,0.2) -- (.95,-.1) -- (1.65,.7) -- (2.2,.4);\draw [->] (.5,-.04) -- (.51,-.0425);}
\Big)
\]
We now verify the two duality equations for $r$ and $s$.
We need to show that the fusions 
$(1_\cala \otimes s) \circledast_{\cala\otimes\cala^\op \otimes\cala} (r \otimes 1_\cala)$
and
$(s \otimes 1_{\cala^\op}) \circledast_{\cala^\op\otimes\cala \otimes\cala^\op} ( 1_{\cala^\op} \otimes r)$ are indeed defects, and
are equivalent to identity defects on $\cala$ and $\cala^\op$, respectively.
Let $I$ be  genuinely bicolored interval.
By~\cite[Lem.\,1.12]{BDH(modularity)}, the definition of the above fusions reduces to
\[
\begin{split}
\big((1 \otimes s) \circledast (r \otimes 1)\big)&(I)= \cala\big(I_\circ \cup_{\{0\}} [0,1] \cup_{\{1\}} \overline{[0,1]} \cup_{\{0\}} [0,1] \cup_{\{1\}} I_\bullet\big)\\
\big((s \otimes 1) \circledast ( 1 \otimes r)\big)&(I)= \cala\big(\bar I_\circ \cup_{\{0\}} \overline{[0,1]} \cup_{\{1\}} [0,1] \cup_{\{0\}} \overline{[0,1]} \cup_{\{1\}} \bar I_\bullet\big),
\end{split}
\]
or perhaps more clearly
\[
\tikzmath[scale=.4] {\useasboundingbox (-2.2,-1.1) rectangle (2.2,1.1);\draw (-2,-.7) [rounded corners=7pt]-- (-2,-.1) -- 
(-1.5,1) [rounded corners=4pt]-- (-.6,0) -- (0,0); \draw[ultra thick] (0,0) [rounded corners=3pt]-- (.5,0) -- (1,-.3) -- (1.7,.5) -- (2.2,.2);\draw [->] (-.31,.002) -- (-.3,0);}
\,\,\,\mapsto \,
\cala\Big(\tikzmath[scale=.4] {\useasboundingbox (-2.2,-.9) rectangle (3.2,1.3);
\draw (-2,-.4) [rounded corners=7pt]-- (-2,.2) -- (-1.5,1.3) [rounded corners=4pt]-- (-.6,.3) [rounded corners=1]-- (1,.3) -- (1,.1) -- (0,.1) -- (0,-.1) [rounded corners=3pt]-- (1.5,-.1) -- (2,-.4) -- (2.7,.4) -- (3.2,.1);
\draw[->] (.5,.3) -- (.51,.3);}\Big)
\quad\,\,\,\text{and}\quad\,\,\,
\tikzmath[scale=.4] {\useasboundingbox (-2.2,-1.1) rectangle (2.2,1.1);\draw (-2,-.7) [rounded corners=7pt]-- (-2,-.1) -- 
(-1.5,1) [rounded corners=4pt]-- (-.6,0) -- (0,0); \draw[ultra thick] (0,0) [rounded corners=3pt]-- (.5,0) -- (1,-.3) -- (1.7,.5) -- (2.2,.2);\draw [->] (-.31,.002) -- (-.3,0);}
\,\,\,\mapsto \,
\cala\Big(\tikzmath[scale=.4] {\useasboundingbox (-2.2,-1.5) rectangle (3.2,.7);
\draw (-2,-1.2) [rounded corners=7pt]-- (-2,-.6) -- (-1.5,.5) [rounded corners=4pt]-- (-.6,-.5) [rounded corners=1]-- (1,-.5) -- (1,-.3) -- (0,.-.3) -- (0,-.1) [rounded corners=3pt]-- (1.5,-.1) -- (2,-.4) -- (2.7,.4) -- (3.2,.1);
\draw[->] (.46,-.1) -- (.45,-.1);}\Big).
\]
These are isomorphic to the weak units on $\cala$ and $\cala^\op$, and therefore are defects; they are equivalent to identity defects by~\cite[\remarkweakidentity\ \& \exampleonenotoneone]{BDH(1*1)}.

Assuming $\cala$ is finite, we now proceed to show that the unit and counit defects are finite.
Let $I_1,\ldots,I_4$ be as in \eqref{eq: picture of I_1 ... I_4}; the intervals $I_1$ and $I_3$ are genuinely bicolored, $I_2$ is white, and $I_4$ is black.
The actions of $r(I_1)$, $r(I_2)$, $r(I_3)$, $r(I_4)$ on the vacuum sector $H_0(r)$ are conjugate to the actions of 
$\cala(I_1)$, $\cala(I_2)\otimes \cala(I_4)$, $\cala(I_3)$, and $\IC$ on $H_0(\cala)$.
The condition of Definition~\ref{def: finiteness for defects} then holds by the split property of $\cala$.

By the same argument, one also shows that $s$ is a finite defect.
\end{proof}

\noindent
From this theorem and Corollary~\ref{cor:defectdualizable}, we have the following:

\begin{corollary} \label{cor:netdualizable}
A finite conformal net is fully dualizable.
\end{corollary}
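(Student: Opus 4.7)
The plan is to assemble the corollary directly from the two preceding results. By definition, $\cala$ is fully dualizable in the symmetric monoidal $3$-category of conformal nets precisely if (a) $\cala$ admits a dual with evaluation and coevaluation defects $r$ and $s$, and (b) those $1$-morphisms $r$ and $s$ are themselves fully dualizable in the relevant $2$-category of defects.

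First, I would invoke Theorem~\ref{thm:finitenetdualizable}, which gives an explicit ambidextrous dual $\cala^\op$ with counit defect ${}_{\cala\otimes\cala^\op}r_{\underline\IC}$ and unit defect ${}_{\underline\IC}s_{\cala^\op\otimes\cala}$ defined in~\eqref{eq: def of r and s defects}, and which moreover asserts that when $\cala$ is finite these unit and counit defects are finite (for both the left and right dualities). This takes care of condition~(a) and reduces condition~(b) to showing that the finite defects $r$ and $s$ are fully dualizable.

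Next, I would apply Corollary~\ref{cor:defectdualizable}: since $\cala$, $\cala^\op$, and $\underline\IC$ are all finite conformal nets (finiteness is preserved under $\mathrm{op}$ and $\underline\IC$ is trivially finite), and since $r$ and $s$ are finite defects between these finite nets, Corollary~\ref{cor:defectdualizable} guarantees that $r$ and $s$ are fully dualizable. This verifies condition~(b), and hence $\cala$ is fully dualizable.

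There is essentially no obstacle here beyond bookkeeping, since all the substantive work has already been done: the construction of the dual $\cala^\op$ and the finiteness of $r,s$ were established in Theorem~\ref{thm:finitenetdualizable}, while the full dualizability of finite defects (which in turn rests on Proposition~\ref{prop:defectadjoint} for the existence of finite ambidextrous adjoint sectors and on Corollary~\ref{cor:sectordualizable} for dualizability of those finite sectors) was packaged in Corollary~\ref{cor:defectdualizable}. The only thing to check in passing is that the ``further adjunctions'' required by the definition of full dualizability for an object in a $3$-category match the data supplied by Corollary~\ref{cor:defectdualizable}, namely that $r,s$ admit ambidextrous adjoint sectors whose unit and counit intertwiners are themselves adjointable; this is immediate from the ambidexterity noted in Remark~\ref{remark-ambi} together with Proposition~\ref{prop:sectoradjoint}.
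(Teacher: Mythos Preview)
Your proposal is correct and follows exactly the approach the paper intends: the corollary is stated immediately after Theorem~\ref{thm:finitenetdualizable} with the one-line justification ``From this theorem and Corollary~\ref{cor:defectdualizable}, we have the following,'' and your argument is simply a careful unpacking of that sentence. One small point worth tightening: to apply Corollary~\ref{cor:defectdualizable} to $r$ and $s$ you need their source and target nets to be finite, and the source of $r$ is $\cala\otimes\cala^{\op}$, not just $\cala$ or $\cala^{\op}$ separately; the paper notes (just after Corollary~\ref{cor:netdualizable}) that finiteness is closed under tensor product ``by direct inspection,'' and you should invoke that here.
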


In any $n$-category, a composition of fully dualizable $1$-morphisms is again fully dualizable; similarly a composition (either vertical or horizontal) of fully dualizable $2$-morphisms is again fully dualizable.  Thus, by Corollary~\ref{cor:sectordualizable}, the collection of finite sectors is closed under composition, and by Corollary~\ref{cor:defectdualizable} and Proposition~\ref{prop:defectfinite} below, the collection of finite defects is closed under composition.  By direct inspection, the collection of finite conformal nets is closed under tensor product.  Altogether we see that the collection of finite conformal nets, finite defects, finite sectors, and intertwiners forms a sub-symmetric-monoidal-$3$-category of the symmetric-monoidal-$3$-category of conformal nets.

Together, Corollaries~\ref{cor:sectordualizable},~\ref{cor:defectdualizable}, and~\ref{cor:netdualizable} establish the following:
\begin{theorem} \label{thm-duals}
The $3$-category of finite conformal nets, finite defects, finite sectors, and intertwiners has all duals.
\end{theorem}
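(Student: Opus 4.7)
The plan is to assemble this theorem from the three corollaries already in place, observing that the collection of finite objects/morphisms is closed under the relevant compositions and thus forms a genuine sub-$3$-category. Concretely, I would first verify closure: the collection of finite conformal nets is closed under the symmetric monoidal product (a finiteness check on $H_0(\cala \otimes \calb) \cong H_0(\cala) \otimes H_0(\calb)$ with respect to the split tensor products of algebras on $I_1\cup I_3$ and $I_2\cup I_4$). For $1$- and $2$-morphism closure, I would invoke the general principle that in any $n$-category a composition of fully dualizable $1$-morphisms is fully dualizable, and a (vertical or horizontal) composition of fully dualizable $2$-morphisms is fully dualizable. Combined with Corollary~\ref{cor:defectdualizable} and Corollary~\ref{cor:sectordualizable}, this shows the composite of finite defects is finite (once one also invokes Proposition~\ref{prop:defectfinite} below, which promotes dualizability back to finiteness) and the composite of finite sectors is finite.

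Having established closure, the remaining task is to check the three dualizability conditions that define ``all duals'' for a $3$-category. For objects, Corollary~\ref{cor:netdualizable} gives that every finite conformal net $\cala$ is fully dualizable, with dual $\cala^\op$ and with evaluation/coevaluation defects $r,s$ from Theorem~\ref{thm:finitenetdualizable}; since those defects are themselves finite, the required further dualizability of the unit/counit defects is supplied by Corollary~\ref{cor:defectdualizable}, and of the resulting unit/counit sectors by Corollary~\ref{cor:sectordualizable}. For $1$-morphisms, Corollary~\ref{cor:defectdualizable} gives that every finite defect ${}_\cala D_\calb$ has ambidextrous adjoint ${}_\calb D^\dagger\!\!{}_\cala$ with finite unit and counit sectors $r,s$, and Corollary~\ref{cor:sectordualizable} then supplies the required adjoints to $r$ and $s$. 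For $2$-morphisms (the top non-invertible level), Corollary~\ref{cor:sectordualizable} directly gives that every finite sector has both a left and a right adjoint.

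Since each finite $i$-morphism ($i=0,1,2$) admits an ambidextrous adjunction whose unit and counit are themselves finite $(i{+}1)$-morphisms, and the finite $(i{+}1)$-morphisms are in turn dualizable, the infinite chain of adjunctions demanded by full dualizability terminates correctly at the $3$-morphism level, where $3$-morphisms are intertwiners of Hilbert spaces and require no further conditions.

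The main conceptual obstacle is really already absorbed into the three earlier corollaries, since they are where the substantive analysis lives; what remains here is bookkeeping about the $3$-categorical structure. The one point requiring a little care is verifying that the ambidextrous nature of all the adjunctions (Remark~\ref{remark-ambi}) genuinely propagates so that every fully dualizable datum produced at one level yields the fully dualizable data needed at the next, but this is guaranteed by the four involutions of Section~\ref{sec: star structures} combined with the closure properties mentioned above.
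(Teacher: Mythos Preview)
Your proposal is correct and mirrors the paper's own argument essentially verbatim: the paper first establishes closure (compositions of fully dualizable $1$- and $2$-morphisms are fully dualizable, hence by Corollary~\ref{cor:sectordualizable} and by Corollary~\ref{cor:defectdualizable} together with Proposition~\ref{prop:defectfinite} the finite sectors and finite defects are closed under composition, while finite nets are closed under tensor product by direct inspection), and then simply collects Corollaries~\ref{cor:sectordualizable}, \ref{cor:defectdualizable}, and~\ref{cor:netdualizable} to conclude.
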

\nid Applying the cobordism hypothesis (as before under the assumption that it applies to the symmetric monoidal $3$-category of conformal nets---see Footnote~\ref{foot-ch}), we obtain the corresponding topological field theories:
\begin{corollary} 
Associated to any finite conformal net $\cala$, there is a $3$-dimensional local framed topological field theory with target the $3$-category of conformal nets, whose value on the positively framed point is the conformal net $\cala$.
\end{corollary}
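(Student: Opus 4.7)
The plan is to assemble this as an immediate consequence of the two ingredients highlighted in the preceding paragraphs: full dualizability of finite conformal nets, and the (assumed) applicability of the cobordism hypothesis to the symmetric monoidal $3$-category $\CN$ constructed in~\cite{BDH(3-category)}.

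First I would invoke Theorem~\ref{thm-duals}, or more precisely Corollary~\ref{cor:netdualizable}, to conclude that any finite conformal net $\cala$ is a fully dualizable object of the symmetric monoidal $3$-category of conformal nets. Unpacking what this means in the present setting, the dual object is $\cala^\op$ (Theorem~\ref{thm:finitenetdualizable}), the evaluation and coevaluation defects $r$ and $s$ of~\eqref{eq: def of r and s defects} are themselves fully dualizable as $1$-morphisms by Corollary~\ref{cor:defectdualizable}, and the units and counits of the various adjunctions arising in the chain are in turn fully dualizable as $2$-morphisms by Corollary~\ref{cor:sectordualizable}.

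Next I would appeal to the cobordism hypothesis of Baez--Dolan--Lurie in the form stated in~\cite{Lurie(On-classification-TFT)}, together with the hypothesis articulated in Footnote~\ref{foot-ch} that $\CN$ can be promoted to (or is equivalent, as a tricategory, to) a symmetric monoidal $3$-category modeled as a $\Gamma$-object in complete $3$-fold Segal spaces. Given this modeling, the cobordism hypothesis produces, functorially in the fully dualizable object, a symmetric monoidal functor $Z_\cala \colon \mathrm{Bord}_3^{\mathrm{fr}} \to \CN$ from the $(\infty,3)$-category of framed bordisms whose value on the positively framed point is $\cala$. This $Z_\cala$ is the desired local framed $3$-dimensional topological field theory.

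The only non-routine step is therefore the invocation of the cobordism hypothesis, and this is precisely the content of the assumption recorded in Footnote~\ref{foot-ch}; the main obstacle in making the corollary genuinely unconditional is thus not dualizability (which we have established) but rather the verification that the tricategory $[\CN]$ admits an equivalent presentation as a complete $3$-fold Segal space to which the extant proofs of the cobordism hypothesis apply.
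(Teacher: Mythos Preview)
Your proposal is correct and matches the paper's approach exactly: the paper does not give a separate proof of this corollary, but simply presents it as the immediate consequence of Theorem~\ref{thm-duals} together with the cobordism hypothesis (under the assumption recorded in Footnote~\ref{foot-ch}). Your additional unpacking of the dualizability data via Corollaries~\ref{cor:netdualizable}, \ref{cor:defectdualizable}, and~\ref{cor:sectordualizable} is accurate and only makes the argument more explicit.
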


\subsection{Dualizable nets are finite}

In the preceding section we saw that the subcategory of finite conformal nets, finite defects, finite sectors, and intertwiners has all duals.  In this section, we prove that that this subcategory is in fact the maximal subcategory of the $3$-category of conformal nets that has all duals.

We already saw in Corollary~\ref{cor:sectordualizable} that a dualizable sector is necessarily finite.  We now show that a fully dualizable defect must be finite:
\begin{proposition} \label{prop:defectfinite}
Let $\cala$ and $\calb$ be finite conformal nets, and let ${}_\cala D_\calb$ be a defect.
If $D$ has an adjoint, then $D$ is finite.
\end{proposition}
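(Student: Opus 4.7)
The plan is to use the zigzag identities from the putative adjunction to express $H_0(D)$ as a $D(I_1)\otimes_{\alg}D(I_3)$-equivariant retract of a fusion product that manifestly carries a normal $D(I_1)\,\bar{\otimes}\,D(I_3)$-action, and then to descend the split structure along the retract.

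Suppose $D$ admits an adjoint ${}_\calb E_\cala$ with counit $r\colon D \circledast_\calb E \to 1_\cala$ and unit $s\colon 1_\calb \to E \circledast_\cala D$ satisfying the zigzag identities; by ambidexterity (Remark~\ref{remark-ambi}), it makes no difference which side of adjunction we consider. First, I would interpret the zigzag identity $(r\cdot 1_D)\cdot(1_D \cdot s) \simeq 1_D$ at the level of underlying Hilbert spaces, realizing $H_0(D)$ as a direct summand of an appropriate fusion of $s$ and $r$ with vacuum sectors of $D$. Using the snake interchange isomorphism (Lemma~\ref{lem: zig-zag for sectors}) together with the vacuum-fusion property (\lemvacvacvacdefects~of~\cite{BDH(1*1)}), I would rewrite this fusion so that the actions of $D(I_1)$ and $D(I_3)$ on $H_0(D)$ correspond transparently to actions transported from $s$ and $r$.

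Second, I would exploit the fact that $s$ and $r$ are bona fide sectors of defects. Any such sector carries, by the sector axioms~\eqref{eq:  orange star} combined with the split property of the finite nets $\cala$ and $\calb$, normal commuting actions of the local algebras on any collection of pairwise-disjoint subintervals of $S^1$ whose joint action extends from the algebraic tensor product to the spatial one. In particular, $r$ (and similarly $s$) carries a joint normal action of $D(I_1)\,\bar{\otimes}\,D(I_3)$ extending the algebraic action, and this normal extension is inherited by the fusion product realizing~$H_0(D)$.

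Finally, since the embedding $H_0(D) \hookrightarrow (\text{fusion})$ is equivariant for the $D(I_1)\otimes_{\alg}D(I_3)$-action, the normal $D(I_1)\,\bar{\otimes}\,D(I_3)$-action restricts to a normal action on $H_0(D)$ extending the original one, establishing that $D$ is finite. The main obstacle is the first step: producing and verifying the equivariant embedding of $H_0(D)$ into the fusion built from $s$ and $r$. The bookkeeping is delicate because the sector structures on $s$ and $r$ involve algebras of the composite defects $D \circledast_\calb E$ and $E \circledast_\cala D$, and routing the $D(I_1)$ and $D(I_3)$ actions cleanly through the fusion is precisely the sort of surgery that the snake interchange of Lemma~\ref{lem: zig-zag for sectors} is designed to handle---one expects the proof to amount to a careful application of that lemma together with Proposition~\ref{prop: tricolored interval acts on H_0(D)}.
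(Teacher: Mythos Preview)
Your outline has a genuine gap in the second step. You assert that the split property of the finite nets $\cala$ and $\calb$ implies that the sectors $r$ and $s$ carry a normal action of $D(I_1)\,\bar\otimes\,D(I_3)$ extending the algebraic one. But the split property only tells you that \emph{conformal-net algebras} on disjoint intervals act jointly via the spatial tensor product; it says nothing about the algebras $D(I)$ for genuinely bicolored $I$. Whether $D(I_1)\otimes_{\alg}D(I_3)$ extends to the spatial tensor product on any given Hilbert space is precisely the finiteness condition you are trying to prove, and there is no a priori reason it holds on $r$ or $s$ (these are sectors for the composite defects $D\circledast_\calb E$ and $E\circledast_\cala D$, which are themselves not known to be finite). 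Your final remark that Proposition~\ref{prop: tricolored interval acts on H_0(D)} should help compounds the problem: that proposition already assumes $D$ finite.

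The paper's argument avoids this circularity by a different mechanism. After writing the zigzag as a fusion (your first step, which is correct), it does not try to find splitness inside $r$ or $s$. Instead it observes that $r$ and $s$ are \emph{invertible} as bimodules over suitable von~Neumann algebras, and fuses with their inverses $\bar r$, $\bar s$ to replace them by vacuum sectors $H_0(D\circledast_\calb D^\vee)$ and $H_0(D^\vee\circledast_\cala D)$. The interchange isomorphism then collapses everything to $H_0(D)\boxtimes_\calb H_0(D^\vee)\boxtimes_\cala H_0(D)$. The point is that the middle factor $H_0(D^\vee)$ is fused on either side over \emph{pure} net algebras $\calb(J_1)$ and $\cala(J_2)$, where the split property of $\cala$ and $\calb$ genuinely applies, making $H_0(D^\vee)$ split as a $\calb(J_1)$--$\cala(J_2)$ bimodule. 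Since fusion with a split bimodule preserves splitness, the two outer $D$-algebra actions (corresponding to your $D(I_1)$ and $D(I_3)$) are separated. The key idea you are missing is this passage from $r$, $s$ to vacuum sectors so that the splitness input comes from the nets rather than from the defects.
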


\begin{proof}
Let $D^\vee$ be the dual of $D$, and let $r$ and $s$ be the counit and unit sectors, so that 
\[
\left(\tikzmath[scale=.9]{
\useasboundingbox (-.3,-1) rectangle (6.3,1);
\node (a) at (0,0) {$\cala$};\node (b) at (2,0) {$\calb$};\node (c) at (4,0) {$\cala$};\node (d) at (6,0) {$\calb$};
\draw[->] (a) -- node[above, yshift=-.1]{$\scriptstyle D$} (b);\draw[->] (a) to[out=-45, in=-135]node[below]{$\scriptstyle 1_\cala$} (c);\draw[->] (c) -- node[above, yshift=-.1]{$\scriptstyle D$} (d);
\draw[->] (b) to[out=45, in=135]node[above]{$\scriptstyle 1_\calb$} (d);\draw[->] (b) -- node[above, yshift=-.1, xshift=.3] {$\scriptstyle D^\vee$} (c);
\node at (2,-.52) {$\Downarrow$};\node at (2.3,-.52) {$r$};\node at (4,.52) {$\Downarrow$};\node at (4.3,.52) {$s$};
}
\right)
\tikzmath{\useasboundingbox (-.6,-1.2) rectangle (.6,1.2);\node{$\cong$};}
\left(
\tikzmath[scale=.9]{ \useasboundingbox (-.3,-1) rectangle (4.1,1);
\node (a') at (0,0) {$\cala$};\node (c') at (3.8,0) {$\calb$};
\draw[->] (a') .. controls (1,.7) and (2.8,.7) .. node[above]{$\scriptstyle D$} (c');
\draw[->] (a') .. controls (1,-.7) and (2.8,-.7) .. node[below]{$\scriptstyle D$} (c');
\node at (1.9,0) {$\Downarrow$};\node at (2.4,0) {$\scriptstyle 1_{D}$};}
\right).
\]
In other words, with $I_1,\ldots,I_{13}$ arranged as before
\[\tikzmath[scale=.09]{\draw (-12,0) -- (24,0) (0,0) -- (0,12) -- (18,12) -- (24,0) -- (24,-12) -- (12,-12) -- (12,0)(12,-12) -- (-6,-12) -- (-12,0) -- (-12,12) -- (0,12);
\draw (-5.7,2) node {$\scriptstyle I_1$}
(2,6) node {$\scriptstyle I_2$}
(6,2) node {$\scriptstyle I_3$}
(14.3,-6) node {$\scriptstyle I_4$}
(18,2) node {$\scriptstyle I_5$}
(-14,6) node {$\scriptstyle I_6$}
(-5.7,14) node {$\scriptstyle I_7$}
(10,14) node {$\scriptstyle I_8$}
(24,6) node {$\scriptstyle I_9$}
(-12,-6) node {$\scriptstyle I_{10}$}
(3,-14) node {$\scriptstyle I_{11}$}
(18,-14) node {$\scriptstyle I_{12}$}
(27,-6) node {$\scriptstyle I_{13}$};
\draw [->] (-6.3,0) -- (-6.31,0);
\draw [->] (17.8,0) -- (17.79,0);
\draw [->] (18.3,-12) -- (18.31,-12);
\draw [->] (6,0) -- (6.01,0);
\draw [->] (12,-6) -- (12,-6.01);
\draw [->] (24,-5.51) -- (24,-5.5);
\draw [->] (0,6.01) -- (0,6);
\draw [->] (-12,6.01) -- (-12,6);
\draw [->] (3.3,-12) -- (3.31,-12);
\draw [->] (-6.3,12) -- (-6.31,12);
\draw [->] (8.01,12) -- (8,12);
\draw [->] (-9.005,-6) -- (-9,-6.01);
\draw [->] (21.005,6) -- (21,6.01);
}\,\,
\]
we have
\begin{equation}\label{eq: duality eq: we're showing dualizable ==> finite}
\big(H_0(D)\,\boxtimes_{\calb(I_2)} s\big)\underset{D(I_1)\vee D^\vee(\bar I_3)\vee D(I_5)}\boxtimes \big(r\,\boxtimes_{\cala(I_4)} H_0(D)\big)\,\,\cong\,\, H_0(D).
\end{equation}
We check that $D$ is finite by showing that the action on \eqref{eq: duality eq: we're showing dualizable ==> finite} of the algebra $D(I_7) \otimes_\alg D(I_{12})$ 
extends to $D(I_7) \,\bar\otimes\, D(I_{12})$.

The Hilbert space $r$ is invertible
as a $D(I_1)\vee D^\vee(I_3)^\op\vee \cala(I_4)^\op$ - $(D(I_1)\vee D^\vee(I_3)^\op\vee \cala(I_4)^\op)'$-bimodule.
Similarly, the Hilbert space $s$ is an invertible $\calb(I_2)\vee D^\vee(I_3)\vee D(I_5)^\op$ - $(\calb(I_2)\vee D^\vee(I_3)\vee D(I_5)^\op)'$-bimodule.
Fusing \eqref{eq: duality eq: we're showing dualizable ==> finite} with the inverse bimodules $\bar r$ and $\bar s$,
and using the (non-canonical) isomorphisms
\[\begin{split}&L^2\big(D(I_1)\vee D^\vee(I_3)^\op\vee \cala(I_4)^\op\big)\,\,\cong\,\, H_0(D\circledast_\calb D^\vee)\\
&L^2\big(\calb(I_2)\vee D^\vee(I_3)\vee D(I_5)^\op\big)\,\,\cong\,\, H_0(D^\vee\circledast_\cala D),\end{split}\]
we get the Hilbert space
\begin{equation*}
\begin{split}
&\big(H_0(D)\,\boxtimes_{\calb} H_0(D^\vee\circledast_\cala D)\big)\underset{D(I_1)\vee D^\vee(\bar I_3)\vee D(I_5)}\boxtimes \big(H_0(D\circledast_\calb D^\vee)\,\boxtimes_{\cala} H_0(D)\big)\\
\cong\,&
\big(H_0(D)\boxtimes_{\calb} H_0(D^\vee)\boxtimes_\cala H_0(D)\big)\!\underset{D(I_1)\vee D^\vee(\bar I_3)\vee D(I_5)}\boxtimes \!\big(H_0(D)\boxtimes_\calb H_0(D^\vee)\boxtimes_{\cala} H_0(D)\big).
\end{split}
\end{equation*}
The latter is isomorphic to 
\begin{equation}\label{eq: H_0(D)H_0(D^dag)H_0(D)}
H_0(D)\,\boxtimes_{\calb} H_0(D^\vee)\boxtimes_\cala H_0(D)
\end{equation}
by the interchange isomorphism~\cite[Sec 6.D]{BDH(1*1)}. 
To be precise, letting $J_1,J_2\ldots, J_{10}$ be as in the following figure
\(
\tikzmath[scale=.07]{\draw (0,0) rectangle (36,12) (12,0)--(12,12) (24,0)--(24,12);
\draw
(15,6) node {$\scriptstyle J_1$}(27,6) node {$\scriptstyle J_2$}(39,6) node {$\scriptstyle J_3$}(30,14) node {$\scriptstyle J_4$}(18,14) node {$\scriptstyle J_5$}
(6,14) node {$\scriptstyle J_6$}(-3,6) node {$\scriptstyle J_7$}(6,-2) node {$\scriptstyle J_8$}(18,-2) node {$\scriptstyle J_9$}(30,-2) node {$\scriptstyle J_{10}$};
\draw [->] (6,0) -- (6.01,0);\draw [->] (18,0) -- (18.01,0);\draw [->] (30,0) -- (30.01,0);\draw [->] (6.01,12) -- (6,12);\draw [->] (18.01,12) -- (18,12);
\draw [->] (30.01,12) -- (30,12);\draw[->](36,6)--(36,6.01);\draw[<-](24,6)--(24,6.01);\draw[<-](12,6)--(12,6.01);\draw[<-](0,6)--(0,6.01);
}
\),
the Hilbert space \eqref{eq: H_0(D)H_0(D^dag)H_0(D)} is given by
$H_0(D)\,\boxtimes_{\calb(J_1)} H_0(D^\vee)\boxtimes_{\cala(J_2)} H_0(D)$.\smallskip

The intervals $J_6$ and $J_{10}$ correspond to $I_7$ and $I_{12}$, respectively.
Note that $H_0(D^\vee)$ is split as a $\calb(J_1)$-$\cala(J_2)$-bimodule.
Since the fusion of a split bimodule with any bimodule is always split, it follows that 
\eqref{eq: H_0(D)H_0(D^dag)H_0(D)} is split as a 
$D(J_6)\vee \cala(J_7)\vee D(J_8)$-$(D(J_{10})\vee \calb(J_3)\vee D(J_4))^\op$-bimodule.
In particular, it is split as a $D(J_6)$-$D(J_{10})^\op$-bimodule.
In other words, the completion of $D(J_6) \otimes_\alg D(J_{10})$ is isomorphic to the spatial tensor product $D(J_6) \,\bar\otimes\, D(J_{10})$.
\end{proof}

Finally, we show that fully dualizable conformal nets must be finite.  
Even though we do not have at hand a $3$-category of all (not-necessarily-finite) conformal nets, we do have enough of the structure of that hypothetical $3$-category to make sense of the notion of an arbitrary conformal net being fully dualizable, and therefore to make sense of the statement that a fully dualizable not-necessarily-finite conformal net must in fact be finite.  

Recall from Theorem~\ref{thm:finitenetdualizable} that any (not-necessarily-finite) conformal net $\cala$ has an ambidextrous dual $\cala^\op$ with evaluation defect ${}_{\cala \otimes \cala^\op}\raisebox{.5ex}{$r$}_{\underline\IC}$ and coevaluation defect ${}_{\underline\IC}\,\raisebox{.5ex}{$s$}_{\cala^\op \otimes \cala}$.  We call such a conformal net \emph{dualizable} if these evaluation and coevaluation defects $r$ and $s$ both have ambidextrous adjoints with dualizable unit and counit sectors.  This definition (specifically the notion of an adjunction for the evaluation and coevaluation defects) is well posed because, for any not-necessarily-finite conformal net $\calb$ and any defects ${}_\calb D_{\underline\IC}$ and ${}_{\underline\IC} E_\calb$, the fusion products $D \circledast_{\underline\IC} E$ and $E \circledast_\calb D$ are indeed defects (the first one by~\cite[Thm.\,1.44]{BDH(1*1)}; the second one because a $\underline\CC$-$\underline\CC$-defect is just a von Neumann algebra~\cite[Prop.\,1.22]{BDH(1*1)}).

\begin{theorem} \label{thm:netfinite}
Let $\cala$ be a not-necessarily-finite conformal net, and let $r$ and $s$ be the evaluation and coevaluation defects of the duality of $\cala$ and $\cala^\op$, given by $r:I\mapsto \cala\big(I_\circ\cup_{I_{\circ\!\bullet}} \bar I_\circ\big)$ and $s:I\mapsto \cala\big(\bar I_\bullet\cup_{I_{\circ\!\bullet}} I_\bullet\big)$.
If the defect $r$ has an adjoint, and its counit sector ${}_{r\circledast_{\underline \IC} r^\vee}R_{\,1_{\cala\otimes\cala^\op}}$ is dualizable, then the conformal net $\cala$ is finite.
\end{theorem}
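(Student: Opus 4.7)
The plan is to imitate the proof of Proposition~\ref{prop:defectfinite} one dimension up: the zigzag identities for the adjunction $r\dashv r^\vee$ will present the vacuum sector $H_0(r)$ as an iterated fusion involving the sectors $R$ and $S$, and the assumed dualizability of $R$ will play the role that the finiteness of the ambient nets $\cala$, $\calb$ played in Proposition~\ref{prop:defectfinite}. Let $S$ denote the unit sector of the adjunction $r\dashv r^\vee$, ambidextrously paired with the given counit $R$ via Remark~\ref{remark-ambi}.

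First, I would apply the zigzag identity for $r\dashv r^\vee$ (the analog at this higher level of Lemma~\ref{lem: zig-zag for sectors}, producing the analog of equation~\eqref{eq: duality eq: we're showing dualizable ==> finite}) to obtain a non-canonical isomorphism of the form $(H_0(r)\boxtimes S)\boxtimes (R\boxtimes H_0(r))\cong H_0(r)$, with all fusions over the appropriate boundary algebras. Since any Hilbert space is invertible as a bimodule over an algebra and its commutant, I would then fuse both sides with the bimodule inverses $\bar R$ and $\bar S$, identify the $L^2$'s of the relevant commutant algebras with $H_0(r\circledast r^\vee)$ and $H_0(r^\vee\circledast r)$ respectively, and apply the interchange isomorphism of~\cite[Sec 6.D]{BDH(1*1)} to rewrite the result as
\[
H_0(r)\,\underset{\underline\IC}\boxtimes\, H_0(r^\vee)\,\underset{\cala\otimes\cala^\op}\boxtimes\, H_0(r)\,\cong\,H_0(r).
\]

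The key use of the hypothesis appears in the final step. By Proposition~\ref{prop:sectoradjoint}, dualizability of $R$ as a sector is equivalent to dualizability of $R$ as a bimodule over the algebras assigned to the top and bottom halves of its defining bicolored circle. Because the defect $r$ is by construction built out of $\cala$ on a doubled sub-interval (equation~\eqref{eq: def of r and s defects}), and because dualizability of bimodules is preserved under fusion, the displayed iterated-fusion identification translates the dualizability of $R$ into dualizability of $H_0(\cala)$—read off through the natural unfolding of $H_0(r)$ as a sector of $\cala$ on the doubled circle underlying $r$—as a $\cala(I_1\cup I_3)$-$\cala(I_2\cup I_4)^\op$-bimodule; equivalently, by Definition~\ref{def: finiteness for defects}, $\cala$ is finite.

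The main obstacle will be the careful bookkeeping: matching the bicolored sub-interval structure of the bicolored circle on which $H_0(r)$ and $R$ live with the sub-interval structure of the unbicolored circle on which $H_0(\cala)$ lives. This correspondence is mediated by the unfolding of the doubled/folded definition~\eqref{eq: def of r and s defects} of $r$, and its verification requires an interval-labelling analysis parallel to those of Proposition~\ref{prop:defectfinite} and of the finiteness part of Theorem~\ref{thm:finitenetdualizable}.
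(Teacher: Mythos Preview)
There is a genuine gap in your final step, and the approach as written is circular. After fusing with the (always-existing) inverse bimodules $\bar R$ and $\bar S$ you have replaced $R$ and $S$ by vacuum sectors and arrived at the expression $H_0(r)\boxtimes_{\underline\IC} H_0(r^\vee)\boxtimes_{\cala\otimes\cala^\op} H_0(r)$, which no longer contains $R$; at that point the hypothesis ``$R$ is dualizable'' has been nowhere used and has nothing left to act on. Your appeal to ``dualizability of bimodules is preserved under fusion'' only helps when \emph{all} factors are dualizable; to conclude that the triple fusion is dualizable you would need $H_0(r)$ or $H_0(r^\vee)$ to already be dualizable as the relevant bimodule, but $H_0(r)$ unfolds to $H_0(\cala)$ and its dualizability is exactly the statement you are trying to prove. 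In the model you are imitating (Proposition~\ref{prop:defectfinite}) the analogous final step is powered by the splitness of $H_0(D^\vee)$ over two interior single-colour intervals, which comes from the split property of the ambient nets and is independent of the adjunction data; there is no analogous free input here, and the paper explicitly warns that because $\cala$ is not assumed finite ``most of our previous results cannot be used''.

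The paper's proof uses the zigzag identity in a different way. Instead of fusing $R$ away, it reads the flattened zigzag~\eqref{eq: flattened out} as saying that a certain completely additive functor applied to $R$ yields the vacuum; since any such functor is Connes fusion with some bimodule, this forces $R$ to be \emph{invertible} as an $r^\vee(K\cup\bar I_6)$--$\cala(I_1)^\op$-bimodule. Letting $Q$ be the inverse (twisted by a suitable diffeomorphism), one computes $(L^2\otimes Q)\boxtimes R\cong H_0(\cala)$ on the doubled circle. Now $L^2\otimes Q$ is invertible and $R$ is dualizable by hypothesis, so their fusion is dualizable --- which is precisely the finiteness of $\cala$. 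The key idea missing from your proposal is this extraction of \emph{invertibility} of $R$ from the zigzag: that is what allows the dualizability of $R$ to be transported to $H_0(\cala)$ rather than being discarded.
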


\noindent Note that the proof of this proposition requires particular care:
$\cala$ is not assumed to have finite index, and so most of our previous results cannot be used here.

\begin{proof}
Recall that 
\[
r \Big( \tikzmath[scale=.4] {\useasboundingbox (-2.2,-1.1) rectangle (2.2,1.1);\draw (-2,-.7) [rounded corners=7pt]-- (-2,-.1) -- 
(-1.5,1) [rounded corners=4pt]-- (-.6,0) -- (0,0); \draw[ultra thick] (0,0) [rounded corners=3pt]-- (.5,0) -- (1,-.3) -- (1.7,.5) -- (2.2,.2);\draw [->] (-.31,.002) -- (-.3,0);}\Big) 
=\cala\Big(\tikzmath[scale=.4] {\useasboundingbox (-2.3,-1.1) rectangle (.2,1.1);\draw (-2,-.7) [rounded corners=7pt]-- (-2,-.1) -- (-1.5,1) [rounded corners=4pt]-- (-.6,0)[rounded corners=1] -- (0,0) -- (0,.1); \draw (-2.15,-.55) [rounded corners=7pt]-- (-2.17,.1) -- (-1.45,1.19) [rounded corners=4pt]-- (-.6,.2)[rounded corners=1] -- (0,.2) -- (0,.1);\draw [->] (-.31,.202) -- (-.3,0.2);}\Big). 
\]
By assumption, $r$ has an adjoint.
Let $r^\vee$ be its adjoint $\underline\IC$ -$(\cala\otimes \cala^\op)$-defect.
Let also ${}_{r\circledast_{\underline \IC} r^\vee}R_{\,1_{\cala\otimes\cala^\op}}$ and ${}_{1_{\underline \IC}}S_{r^\vee \circledast_{\cala\otimes\cala^\op} r}$
be the corresponding counit and unit sectors.
We now describe the algebras that act on the Hilbert spaces $R$ and $S$.

Take the ``standard circle'' $\partial[0,1]^2$ and cut it open at the point $(\frac 12,1)$.
Call the two resulting boundary points $p$ and $q$.
The resulting manifold, call it $M$, looks roughly  like this: 
\[
\,\,\,\,\tikzmath[scale = 0.35]{\useasboundingbox (-1,0) rectangle (3,3.2);
\draw (1,.1)to[out = 180,in = 10] (0,0)to[out = 100,in = -45](-1,2)to[out = 36,in = 198](0,2.5) circle (.05) circle (.025);
\draw (1,.1)to[out = 0,in = 170] (2,0) to[out = 80,in = 225](3,2)to[out = 144,in = -18](2,2.5) circle (.05) circle (.025);
\draw[->] (1.1,.1) -- (1.11,.1);\node[scale = .8] at (0,3) {$p$};\node[scale = .8] at (2,3) {$q$};}\,\,. 
\]
Now consider its doubling $N:=M\cup_{\{p,q\}} \bar M$:
\begin{equation}\label{eq: consider its doubling}
N=\,\,\,\tikzmath[scale = 0.35]{\useasboundingbox (-1.2,-.2) rectangle (3.2,3.3);
\draw (1,-.1)to[out = 180,in = 10] (-.15,-.22)to[out = 100,in = -45](-1.27,2.03)to[out = 36,in = 198](0,2.67) arc (108:18:.1) +(18:.06) -- +(18:-.06) +(0,0) arc (18:-72:.1)
to[in = 36,out = 198] (-.96,2) to[in = 100,out = -45] (0.01,0.02) to[in = 180,out = 10] (1,.1);
\draw (1,-.1)to[out = 0,in = 170] (2.15,-.22) to[out = 80,in = 225] (3.27,2.03) to[out = 144,in = -18](2,2.67) arc (72: 162:.1) +(-18:.06) -- +(-18:-.06) +(0,0) arc (162: 252:.1)
to[in = 144,out = -18] (2.96,2) to[in = 80,out = 225] (1.99,0.02) to[in = 0,out = 170] (1,.1);
\draw[->] (1.15,.1) -- (1.16,.1);\node[scale = .8] at (0.1,3.2) {$p$};\node[scale = .8] at (1.9,3.2) {$q$};}\,, 
\end{equation}
and let $\kappa:N\to N$ be the orientation reversing involution that exchanges $M$ and $\bar M$ and fixes $p$ and $q$. Given a 
$\kappa$-invariant neighborhood $J$ of $q$, let $J_\kappa:= [0,1]\cup J/\kappa$ be the bicolored interval with bicoloring given by $(J_\kappa)_\circ = [0,1]$ and $(J_\kappa)_\bullet = J/\kappa$
\begin{equation}\label{eq: J n Jk}
J=\tikzmath[scale = 0.35]{\useasboundingbox (1,-.2) rectangle (3.2,3.3);
\clip (3.8,2) -- (3,.8) -- (1,1.8) -- (1,3.5) -- cycle;
\draw (1,-.1)to[out = 0,in = 170] (2.15,-.22) to[out = 80,in = 225] (3.27,2.03) to[out = 144,in = -18](2,2.67) arc (72: 162:.1) arc (162: 252:.1)
to[in = 144,out = -18] (2.96,2) to[in = 80,out = 225] (1.99,0.02) to[in = 0,out = 170] (1,.1);
\draw[<-] (2.502,2.49) -- (2.5,2.491);
}
\qquad\quad\rightsquigarrow\quad\qquad J_\kappa =\tikzmath[scale = 0.35]{\useasboundingbox (1,-.2) rectangle (3.2,3.3);
\draw[ultra thick](2.43,1.1) to[in = -135,out = 65] (3.1,2.02) to[in = -16,out = 145] (1.88,2.61);\draw (1.88,2.61) -- +(164:1);
\draw[thick,->] (2.44,2.391) -- (2.442,2.39);
}\,.
\end{equation}
By definition of $(r\circledast_{\underline \IC} r^\vee)$-$(1_{\cala\otimes\cala^\op})$-sector,
the Hilbert space $R$ has actions of $\cala(J)$ for every subinterval $J\subset N$ that avoids $q$, and actions of $r^\vee (J_\kappa)$ for every $\kappa$-invariant interval $J$ that contains $q$.

The algebras acting on $S$ are somewhat easier to describe.
Consider the double $D:=[0,1]\cup_{\{0,1\}}[0,1]$ of the standard interval $[0,1]$, and let $\kappa:D\to D$ be the involution that exchanges the two copies of $[0,1]$.
The Hilbert space $S$ has an action of $\cala(J)$ for every subinterval $J\subset D$ that avoids the point $0$,
and an action of $r^\vee (J_\kappa)$ for every $\kappa$-invariant interval that contains $0$.

We find it convenient to think of $R$ as being associated to a saddle, and of $S$ as being associated to a cap:
\[
\tikzmath[scale = 0.4]{
\fill [gray!30] (-1,.2) -- (-1,2) [rounded corners=1]-- (0,2) -- (0,2.2) [sharp corners]-- (-1.2,2.2) -- (-1.2,.2) (2,2) -- (2,2.2) [rounded corners=1]-- (1,2.2) -- (1,2) -- (2,2);
\fill [spacecolor] (-1,0) -- (-1,2) [rounded corners=1]-- (0,2) -- (0,2.1) [sharp corners]-- (0,1.8) arc (-180:0:.5) -- (1,2.1) [rounded corners=1]-- (1,2) [sharp corners]-- (2.2,2) -- (2.2,0);(2,2) -- (2,2.2) [rounded corners=1]-- (1,2.2) -- (1,2) -- (2,2); \draw[densely dotted] (2,1.9) -- (2,.2) -- (-.9,.2); \draw (-1.1,.2) -- (-1.2,.2) -- (-1.2,2.2) [rounded corners=1]-- (0,2.2) -- (0,2) [sharp corners] -- (-1,2) -- 
(-1,0) -- (2.2,0) -- (2.2,2) [rounded corners=1] -- (1,2) -- (1,2.2) [sharp corners] -- (2,2.2) -- (2,2.1);\node at (.48,1.15) {$R$};\draw (.95,2.1) --node[scale=.8, above, xshift=1, yshift=-1]{$\scriptstyle r^{\!\vee}$} (1.05,2.1);} 
\qquad\qquad\quad
\tikzmath[scale = 0.4]{\useasboundingbox (0,0) rectangle (2,2.5);\fill [spacecolor] (-.1,.1) arc (180:0:1.2) arc (0:-90:.1) -- (0,0) arc (270:180:.1);
\draw (2.3,.1) arc (0:-90:.1) -- (0,0) arc (270:180:.1);\draw[densely dotted] (2.3,.1) arc (0:90:.1) -- (0,.2) arc (90:180:.1);
\node at (1.05,1.15) {$S$};\draw (-.15,.1) --node[scale=.8, below, xshift=1]{$\scriptstyle r^{\!\vee}$} (-.05,.1) (-.1,.1) -- (-.1,.14);\node at (3,0) {.};
} 
\]
The duality equation
\[
\left(\tikzmath[scale=.9]{\useasboundingbox (-.7,-1) rectangle (6.3,1);\node (a) at (0,0) {$\cala\otimes \cala^\op$};\node (b) at (2,0) {$\underline \IC$};\node (c) at (4,0) {$\cala\otimes \cala^\op$};
\node (d) at (6,0) {$\underline \IC$};\draw[->] (a) -- node[above, yshift=-.1]{$\scriptstyle r$} (b);\draw[->] (a) to[out=-45, in=-135]node[below]{$\scriptstyle 1_{\cala\otimes \cala^\op}$} (c);
\draw[->] (c) -- node[above, yshift=-.1, xshift=-1]{$\scriptstyle r$} (d);\draw[->] (b) to[out=45, in=135]node[above]{$\scriptstyle 1_{\underline \IC}$} (d);\draw[->] (b) -- node[above, yshift=-.1, xshift=4]
{$\scriptstyle r^\vee$} (c);\node at (2,-.52) {$\Downarrow$};\node at (2.3,-.52) {$\scriptstyle R$};\node at (4,.52) {$\Downarrow$};\node at (4.3,.52) {$\scriptstyle S$};}\right)
\tikzmath{\useasboundingbox (-.5,-1.2) rectangle (.5,1.2);\node{$\cong$};}
\left(\tikzmath[scale=.9]{ \useasboundingbox (-.3,-1) rectangle (4.1,1);\node (a') at (.4,0) {$\cala\otimes \cala^\op$};\node (c') at (3.8,0) {$\underline \IC$};
\draw[->] (a') to[out=35, in=145] node[above]{$\scriptstyle r$} (c');\draw[->] (a') to[out=-35, in=-145] node[below]{$\scriptstyle r$} (c');
\node at (2.1,0) {$\Downarrow$};\node at (2.5,0) {$\scriptstyle 1_r$};}\right)
\]
then translates into the statement
\begin{equation}\label{eq: morse cancellation}
\tikzmath[scale = 0.4]{\useasboundingbox (-1.2,0) rectangle (3.5,5);
\filldraw [gray!30] (-1,.2) -- (-1,2) [rounded corners=1]-- (0,2) -- (0,2.2) [sharp corners]-- (-1.2,2.2) -- (-1.2,.2)(2,2) -- (2,2.2) [rounded corners=1]-- (1,2.2) -- (1,2) -- (2,2);
\filldraw [spacecolor] (-1,0) -- (-1,2) [rounded corners=1]-- (0,2) -- (0,2.1) [sharp corners]-- (0,1.8) arc (-180:0:.5) -- (1,2.1) [rounded corners=1]-- (1,2) [sharp corners]-- (2.2,2) -- (2.2,0);(2,2) -- (2,2.2) [rounded corners=1]-- (1,2.2) -- (1,2) -- (2,2); \draw[densely dotted] (2,1.9) -- (2,.2) -- (-.9,.2); \draw (-1.1,.2) -- (-1.2,.2) -- (-1.2,2.2) [rounded corners=1]-- (0,2.2) -- (0,2) [sharp corners] -- (-1,2) -- (-1,0) -- (2.2,0) -- (2.2,2) [rounded corners=1] -- (1,2) -- (1,2.2) [sharp corners] -- (2,2.2) -- (2,2.1); \node at (.48,1.15) {$R$};
\draw (.95,2.1) --node[scale=.8, left, xshift=2, yshift=2]{$\scriptstyle r^{\!\vee}$} (1.05,2.1);
\filldraw [spacecolor] (2.6,0) [rounded corners=1]-- (3.6,0) [sharp corners]-- (3.6,.1) -- (3.6,2.1) [rounded corners=1]-- (3.6,2) [sharp corners]-- (2.6,2);
\filldraw [gray!30] (2.6,.2) -- (2.6,2) [rounded corners=1]-- (3.6,2) -- (3.6,2.2) [sharp corners]-- (2.4,2.2) -- (2.4,.2);
\draw (2.5,.2) -- (2.4,.2) -- (2.4,2.2) [rounded corners=1]-- (3.6,2.2) -- (3.6,2) [sharp corners]-- (2.6,2) -- (2.6,0) [rounded corners=1]-- (3.6,0) -- (3.6,.1);
\draw[densely dotted] (2.7,.2) [rounded corners=1]-- (3.6,.2) -- (3.6,.1);
\pgftransformxshift{31.3} 
\pgftransformyshift{70} 
\fill [spacecolor] (-.1,.1) arc (180:0:1.3) arc (0:-90:.1) -- (0,0) arc (270:180:.1);\draw (2.5,.1) arc (0:-90:.1) -- (0,0) arc (270:180:.1);
\draw[densely dotted] (2.5,.1) arc (0:90:.1) -- (0,.2) arc (90:180:.1); \node at (1.16,1.25) {$S$};\draw (-.15,.1) -- (-.05,.1) (-.1,.1) -- (-.1,.14);
\pgftransformxshift{-133.7} 
\filldraw [spacecolor] (2.6,0) [rounded corners=1]-- (3.6,0) [sharp corners]-- (3.6,.1) -- (3.6,2.1) [rounded corners=1]-- (3.6,2) [sharp corners]-- (2.6,2);
\filldraw [gray!30] (2.6,.2) -- (2.6,2) [rounded corners=1]-- (3.6,2) -- (3.6,2.2) [sharp corners]-- (2.4,2.2) -- (2.4,.2);
\draw (2.5,.2) -- (2.4,.2) -- (2.4,2.2) [rounded corners=1]-- (3.6,2.2) -- (3.6,2) [sharp corners]-- (2.6,2) -- (2.6,0) [rounded corners=1]-- (3.6,0) -- (3.6,.1);
\draw[densely dotted] (2.7,.2) [rounded corners=1]-- (3.6,.2) -- (3.6,.1);
} 
\tikzmath{\useasboundingbox (-1,-1.2) rectangle (1,1.2);\node at (0,-.2) {$\cong$};}
\def\rc{1.5}\tikzmath[scale = 0.6]{\useasboundingbox (2.5,0) rectangle (3.6,3.2);
\filldraw [spacecolor] (2.6,0) [rounded corners=\rc]-- (3.6,0) [sharp corners]-- (3.6,.1) -- (3.6,2.1) [rounded corners=\rc]-- (3.6,2) [sharp corners]-- (2.6,2);
\filldraw [gray!30] (2.6,.2) -- (2.6,2) [rounded corners=\rc]-- (3.6,2) -- (3.6,2.2) [sharp corners]-- (2.4,2.2) -- (2.4,.2);
\draw (2.5,.2) -- (2.4,.2) -- (2.4,2.2) [rounded corners=\rc]-- (3.6,2.2) -- (3.6,2) [sharp corners]-- (2.6,2) -- (2.6,0) [rounded corners=\rc]-- (3.6,0) -- (3.6,.1);
\draw[densely dotted] (2.7,.2) [rounded corners=\rc]-- (3.6,.2) -- (3.6,.1);
} 
\tikzmath{\useasboundingbox (-.3,-1.2) rectangle (.1,1.2);\node at (0,-.28) {$,$};}
\end{equation}
where the left hand side stands for the fusion of the Hilbert spaces\, 
$
\tikzmath[scale = 0.4]{\useasboundingbox (-2.4,-.2) rectangle (2.5,2);
\fill [spacecolor] (-.1,.1) arc (180:0:1.3) arc (0:-90:.1) -- (0,0) arc (270:180:.1);\draw (2.5,.1) arc (0:-90:.1) -- (0,0) arc (270:180:.1);
\draw[densely dotted] (2.5,.1) arc (0:90:.1) -- (0,.2) arc (90:180:.1);\node at (1.16,1.25) {$S$};\draw (-.15,.1) -- (-.05,.1) (-.1,.1) -- (-.1,.14);\node at (-.5,.9) {$\scriptstyle \otimes$};
\pgftransformxshift{-133.7} 
\filldraw [spacecolor] (2.6,0) [rounded corners=1]-- (3.6,0) [sharp corners]-- (3.6,.1) -- (3.6,2.1) [rounded corners=1]-- (3.6,2) [sharp corners]-- (2.6,2);
\filldraw [gray!30] (2.6,.2) -- (2.6,2) [rounded corners=1]-- (3.6,2) -- (3.6,2.2) [sharp corners]-- (2.4,2.2) -- (2.4,.2);\draw (2.5,.2) -- (2.4,.2) -- (2.4,2.2) [rounded corners=1]-- 
(3.6,2.2) -- (3.6,2) [sharp corners]-- (2.6,2) -- (2.6,0) [rounded corners=1]-- (3.6,0) -- (3.6,.1);\draw[densely dotted] (2.7,.2) [rounded corners=1]-- (3.6,.2) -- (3.6,.1);}
$
and
$
\tikzmath[scale = 0.4]{\useasboundingbox (-1.4,-.2) rectangle (6.5,2.2);
\filldraw [gray!30] (-1,.2) -- (-1,2) [rounded corners=1]-- (0,2) -- (0,2.2) [sharp corners]-- (-1.2,2.2) -- (-1.2,.2)(2,2) -- (2,2.2) [rounded corners=1]-- (1,2.2) -- (1,2) -- (2,2);
\filldraw [spacecolor] (-1,0) -- (-1,2) [rounded corners=1]-- (0,2) -- (0,2.1) [sharp corners]-- (0,1.8) arc (-180:0:.5) -- (1,2.1) [rounded corners=1]-- (1,2) [sharp corners]-- 
(2.2,2) -- (2.2,0);(2,2) -- (2,2.2) [rounded corners=1]-- (1,2.2) -- (1,2) -- (2,2);\draw[densely dotted] (2,1.9) -- (2,.2) -- (-.9,.2); \draw (-1.1,.2) -- (-1.2,.2) -- (-1.2,2.2) [rounded corners=1]-- (0,2.2) -- (0,2) [sharp corners] -- (-1,2) -- (-1,0) -- (2.2,0) -- (2.2,2) [rounded corners=1] -- (1,2) -- (1,2.2) [sharp corners] -- (2,2.2) -- (2,2.1);
\node at (.48,1.15) {$R$};\draw (.95,2.1) -- (1.05,2.1);\node at (3.6,.8) {$\scriptstyle \boxtimes_{\scriptstyle\cala(\hspace{.2cm})}$};\draw (4.06,.07) -- (4.06,1.4) (4.24,-.1) -- (4.24,1.23);
\pgftransformxshift{75} 
\filldraw [spacecolor] (2.6,0) [rounded corners=1]-- (3.6,0) [sharp corners]-- (3.6,.1) -- (3.6,2.1) [rounded corners=1]-- (3.6,2) [sharp corners]-- (2.6,2);
\filldraw [gray!30] (2.6,.2) -- (2.6,2) [rounded corners=1]-- (3.6,2) -- (3.6,2.2) [sharp corners]-- (2.4,2.2) -- (2.4,.2);
\draw (2.5,.2) -- (2.4,.2) -- (2.4,2.2) [rounded corners=1]-- (3.6,2.2) -- (3.6,2) [sharp corners]-- (2.6,2) -- (2.6,0) [rounded corners=1]-- (3.6,0) -- (3.6,.1);
\draw[densely dotted] (2.7,.2) [rounded corners=1]-- (3.6,.2) -- (3.6,.1);} 
$
along the algebra 
\[
\cala\big(\tikzmath[scale = 0.4]{\draw (-1.1,0) [rounded corners=1]-- (0,0) -- (0,.2) -- (-1.1,.2);\draw[->] (-.51,.2) -- (-.5,.2);} 
\big)\,\,\bar\otimes\,\, \Big(r^\vee\big(\tikzmath[scale = 0.4]{\useasboundingbox (-2,-.1) rectangle (.2,.1);\draw (-2,0) -- (-1.3,0);
\draw[ultra thick, line join=round] (-1.3,0) -- (0,0) -- (0,-.5);\draw[thick,->] (-.51,0) -- (-.5,0);} 
\big) \,\circledast_{(\cala\otimes \cala^\op) (\,\tikzmath[scale=.3]{\draw (0,0) -- (0,-1);\draw[->] (0,-.46) -- (0,-.45);}\,)} 
\cala\big(\tikzmath[scale = 0.4]{\useasboundingbox (-1.4,-.1) rectangle (0,.3);\draw (-1.1,-.5) -- (-1.1,0) [rounded corners=1]-- (0,0) -- (0,.2) [sharp corners]-- (-1.3,.2) -- (-1.3,-.5);
\draw[->] (-.51,.2) -- (-.5,.2);}\big)\Big) 
\]
associated to the manifold
$ \tikzmath[scale = 0.4]{\useasboundingbox (-1.3,2) rectangle (3.6,2.3); \draw (-1.1,2) [rounded corners=1]-- (0,2) -- (0,2.2) -- (-1.1,2.2) (3.6,2.2) -- (1,2.2) -- (1,2) -- (3.6,2) -- cycle 
(.95,2.1) --node[scale=.8, above, xshift=1, yshift=-1]{$\scriptstyle r^{\!\vee}$} (1.05,2.1);} 
$\,, and 
$ \,\def\rc{1}\tikzmath[scale = 0.4]{\useasboundingbox (2.4,-.2) rectangle (3.6,2.4);
\filldraw [spacecolor] (2.6,0) [rounded corners=\rc]-- (3.6,0) [sharp corners]-- (3.6,.1) -- (3.6,2.1) [rounded corners=\rc]-- (3.6,2) [sharp corners]-- (2.6,2);
\filldraw [gray!30] (2.6,.2) -- (2.6,2) [rounded corners=\rc]-- (3.6,2) -- (3.6,2.2) [sharp corners]-- (2.4,2.2) -- (2.4,.2);
\draw (2.5,.2) -- (2.4,.2) -- (2.4,2.2) [rounded corners=\rc]-- (3.6,2.2) -- (3.6,2) [sharp corners]-- (2.6,2) -- (2.6,0) [rounded corners=\rc]-- (3.6,0) -- (3.6,.1);
\draw[densely dotted] (2.7,.2) [rounded corners=\rc]-- (3.6,.2) -- (3.6,.1);}\, 
$ stands for $1_r$.
The upper left $ \,\def\rc{1}\tikzmath[scale = 0.4]{\useasboundingbox (2.4,-.2) rectangle (3.6,2.4);
\filldraw [spacecolor] (2.6,0) [rounded corners=\rc]-- (3.6,0) [sharp corners]-- (3.6,.1) -- (3.6,2.1) [rounded corners=\rc]-- (3.6,2) [sharp corners]-- (2.6,2);
\filldraw [gray!30] (2.6,.2) -- (2.6,2) [rounded corners=\rc]-- (3.6,2) -- (3.6,2.2) [sharp corners]-- (2.4,2.2) -- (2.4,.2);
\draw (2.5,.2) -- (2.4,.2) -- (2.4,2.2) [rounded corners=\rc]-- (3.6,2.2) -- (3.6,2) [sharp corners]-- (2.6,2) -- (2.6,0) [rounded corners=\rc]-- (3.6,0) -- (3.6,.1);
\draw[densely dotted] (2.7,.2) [rounded corners=\rc]-- (3.6,.2) -- (3.6,.1); }\, 
$ in \eqref{eq: morse cancellation} does not change anything, and so it can be safely ignored~\cite[\lemHKK]{BDH(modularity)}.
Equation \eqref{eq: morse cancellation} then becomes
\[
\tikzmath[scale = 0.4]{\useasboundingbox (-1.2,0) rectangle (3.5,5);
\filldraw [gray!30] (-1,.2) -- (-1,2) [rounded corners=1]-- (0,2) -- (0,2.2) [sharp corners]-- (-1.2,2.2) -- (-1.2,.2)(2,2) -- (2,2.2) [rounded corners=1]-- (1,2.2) -- (1,2) -- (2,2);
\filldraw [spacecolor] (-1,0) -- (-1,2) [rounded corners=1]-- (0,2) -- (0,2.1) [sharp corners]-- (0,1.8) arc (-180:0:.5) -- (1,2.1) [rounded corners=1]-- (1,2) [sharp corners]-- (2.2,2) -- (2.2,0);(2,2) -- (2,2.2) [rounded corners=1]-- (1,2.2) -- (1,2) -- (2,2); \draw[densely dotted] (2,1.9) -- (2,.2) -- (-.9,.2); \draw (-1.1,.2) -- (-1.2,.2) -- (-1.2,2.2) [rounded corners=1]-- (0,2.2) -- (0,2) [sharp corners] -- (-1,2) -- (-1,0) -- (2.2,0) -- (2.2,2) [rounded corners=1] -- (1,2) -- (1,2.2) [sharp corners] -- (2,2.2) -- (2,2.1); \node at (.48,1.15) {$R$};
\draw (.95,2.1) --node[scale=.8, left, xshift=2, yshift=2]{$\scriptstyle r^{\!\vee}$} (1.05,2.1);
\filldraw [spacecolor] (2.6,0) [rounded corners=1]-- (3.6,0) [sharp corners]-- (3.6,.1) -- (3.6,2.1) [rounded corners=1]-- (3.6,2) [sharp corners]-- (2.6,2);
\filldraw [gray!30] (2.6,.2) -- (2.6,2) [rounded corners=1]-- (3.6,2) -- (3.6,2.2) [sharp corners]-- (2.4,2.2) -- (2.4,.2);
\draw (2.5,.2) -- (2.4,.2) -- (2.4,2.2) [rounded corners=1]-- (3.6,2.2) -- (3.6,2) [sharp corners]-- (2.6,2) -- (2.6,0) [rounded corners=1]-- (3.6,0) -- (3.6,.1);
\draw[densely dotted] (2.7,.2) [rounded corners=1]-- (3.6,.2) -- (3.6,.1);
\pgftransformxshift{31.3} 
\pgftransformyshift{70} 
\fill [spacecolor] (-.1,.1) arc (180:0:1.3) arc (0:-90:.1) -- (0,0) arc (270:180:.1);\draw (2.5,.1) arc (0:-90:.1) -- (0,0) arc (270:180:.1);
\draw[densely dotted] (2.5,.1) arc (0:90:.1) -- (0,.2) arc (90:180:.1); \node at (1.16,1.25) {$S$};\draw (-.15,.1) -- (-.05,.1) (-.1,.1) -- (-.1,.14);} 
\tikzmath{\useasboundingbox (-1,-1.2) rectangle (1,1.2);\node at (0,-.2) {$\cong$};}
\def\rc{1.5}\tikzmath[scale = 0.6]{\useasboundingbox (2.5,0) rectangle (3.6,3.2);
\filldraw [spacecolor] (2.6,0) [rounded corners=\rc]-- (3.6,0) [sharp corners]-- (3.6,.1) -- (3.6,2.1) [rounded corners=\rc]-- (3.6,2) [sharp corners]-- (2.6,2);
\filldraw [gray!30] (2.6,.2) -- (2.6,2) [rounded corners=\rc]-- (3.6,2) -- (3.6,2.2) [sharp corners]-- (2.4,2.2) -- (2.4,.2);
\draw (2.5,.2) -- (2.4,.2) -- (2.4,2.2) [rounded corners=\rc]-- (3.6,2.2) -- (3.6,2) [sharp corners]-- (2.6,2) -- (2.6,0) [rounded corners=\rc]-- (3.6,0) -- (3.6,.1);
\draw[densely dotted] (2.7,.2) [rounded corners=\rc]-- (3.6,.2) -- (3.6,.1);} 
\tikzmath{\useasboundingbox (-.25,-1.2) rectangle (.1,1.2);\node at (0,-.28) {$,$};}
\]
or, equivalently, after flattening the above $2$-manifolds:
\begin{equation}\label{eq: flattened out}
\,\,\,\,\quad\tikzmath[scale=.06]{\pgftransformxshift{-10} 
\filldraw[fill = gray!30] (90:15) arc (90:270:15) -- (270:8) arc (267.5:92.5:8) -- cycle;\node[scale=.9] at (140:11.6) {$R$}; \pgftransformxshift{20} 
\filldraw[fill = gray!30] (90:15) arc (90:-90:15) -- (-90:8) arc (-87.5:87.5:8) -- cycle; \pgftransformxshift{-10} 
\fill (-8,0) circle (.4) node[scale=.9, left, yshift=1, xshift=3] {$\scriptscriptstyle r^{\!\vee}$};\filldraw [fill = gray!30] circle (7);\fill (-7,0) circle (.4);\node [scale=.9] {$S$};} 
\,\,\,\,\quad\cong\,\,\,\,\quad
\tikzmath[scale=.06]{\filldraw[fill = gray!30] circle (14);}\,\,\,. 
\smallskip\end{equation}
Let us name $I_1,\ldots,I_6$ the intervals that appear in \eqref{eq: flattened out}
\[
\tikzmath[scale=.06]{\draw circle (7) circle (15) (0,7) -- (0,15) (0,-7) -- (0,-15);
\draw[->] (0,10.51) --node[right, xshift=-2]{$\scriptstyle I_6$} (0,10.5);\draw[->] (0,-11.5) --node[right, xshift=-2, yshift=1]{$\scriptstyle I_4$} (0,-11.51);
\draw[->] (-15,-.5) --node[left, xshift=2, yshift=.7]{$\scriptstyle I_1$} (-15,-.51);\draw[->] (-7,0.5) --node[right, xshift=-1, yshift=-1]{$\scriptstyle I_2$} (-7,0.51);
\draw[->] (7,-.5) --node[left, xshift=2, yshift=.7]{$\scriptstyle I_3$} (7,-.51);\draw[->] (15,0.5) --node[right, xshift=-1, yshift=-1]{$\scriptstyle I_5$} (15,0.51);
}.
\]
Let $\kappa$ be the reflection in the horizontal axis, 
and let $K:=(I_2)_\kappa=[0,1]\cup I_2/\kappa$ be as in \eqref{eq: J n Jk}, bicolored by $K_\circ=[0,1]$ and $K_\bullet=I_2/\kappa$.
We also abbreviate $H_0(I_3\cup I_4\cup I_5\cup I_6,\cala)$ by $H_0(\cala)$.
The left hand side of \eqref{eq: flattened out} stands for the fusion of
$S$ with $R\boxtimes_{\cala(I_6\cup I_4)} H_0(\cala)$ along the algebra
\[
r^\vee(K)\vee \cala(I_3)=r^\vee(K\cup \bar I_6)\circledast_{(\cala\otimes \cala^\op)(I_6)}\cala(I_6\cup I_3\cup I_4),
\] 
where we identify $(\cala\otimes \cala^\op)(I_6)$ with $\cala(I_6\cup I_4)$ using the reflection $\kappa: \bar I_6\stackrel{\scriptscriptstyle \cong}\to I_4$.  

Recall~\cite[Lec 21]{Lurie-vnalgcourse} that a dagger functor $F$ is called `completely additive' if whenever the collection $\iota_\alpha: M_\alpha \ra M$ exhibit $M$ as the direct sum $\bigoplus M_\alpha$, then also $F(\iota_\alpha) : F(M_\alpha) \ra F(M)$ exhibit $F(M)$ as $\bigoplus F(M_\alpha)$.  (We called such a functor `normal' in~\cite[App. B.VIII]{BDH(1*1)}.)
The functor
\[
S \!\underset{r^\vee(K)\vee \cala(I_3)}\boxtimes \! \Big(\!-\,\boxtimes_{\cala(I_6\cup I_4)} H_0(\cala)\Big)\,\,\,:\,\,\,
r^\vee(K\cup \bar I_6)\text{-modules}\,\to\,
\cala(I_5)\text{-modules}
\]
is completely additive.
It is therefore given by Connes fusion with a certain $r^\vee(K\cup \bar I_6)^\op$-$\cala(I_5)$-bimodule~\cite[Lec 21]{Lurie-vnalgcourse}.
It then follows from \eqref{eq: flattened out} that the Hilbert space $R$ is invertible as $r^\vee(K\cup \bar I_6)$ - $\cala(I_1)^\op$-bimodule.

Recall that $R$ is finite as $(r\circledast_{\underline \IC} r^\vee)$-$(1_{\cala\otimes\cala^\op})$-sector.
In other words, it is finite as an
\[
\cala\Big(\tikzmath[scale = 0.35]{\useasboundingbox (-1.2,-.2) rectangle (3.2,3.3);\clip (-1.8,2) -- (-1,.8) -- (1,1.8) -- (1,3.5) -- cycle;
\draw (1,-.1)to[out = 180,in = 10] (-.15,-.22)to[out = 100,in = -45](-1.27,2.03)to[out = 36,in = 198](0,2.67) arc (108:18:.1) arc (18:-72:.1)
to[in = 36,out = 198] (-.96,2) to[in = 100,out = -45] (0.01,0.02) to[in = 180,out = 10] (1,.1);\draw[->] (-.442,2.52) -- (-.44,2.521);} 
\Big)\,\bar\otimes\,r^\vee\Big(\tikzmath[scale = 0.35]{\useasboundingbox (-1.2,-.2) rectangle (3.2,3.3);
\draw[ultra thick](2.43,1.1) to[in = -135,out = 65] (3.1,2.02) to[in = -16,out = 145] (1.88,2.61);\draw (1.88,2.61) -- +(164:1);\draw[thick,->] (2.44,2.391) -- (2.442,2.39);} 
\Big)\,\,\,\text{-}\,\,\,\cala\Big(\tikzmath[scale = 0.35]{\useasboundingbox (-1.2,-.2) rectangle (3.2,3.3);\clip (-1,-.3) -- (-1,.8) -- (1,1.8) -- (3,.8) -- (3,-.3) -- cycle;
\draw (1,-.1)to[out = 180,in = 10] (-.15,-.22)to[out = 100,in = -45](-1.27,2.03)to[out = 36,in = 198](0,2.67) arc (108:18:.1) arc (18:-72:.1)to[in = 36,out = 198] (-.96,2) to[in = 100,out = -45] (0.01,0.02) to[in = 180,out = 10] (1,.1);\draw (1,-.1)to[out = 0,in = 170] (2.15,-.22) to[out = 80,in = 225] (3.27,2.03) to[out = 144,in = -18](2,2.67) arc (72: 162:.1) arc (162: 252:.1)
to[in = 144,out = -18] (2.96,2) to[in = 80,out = 225] (1.99,0.02) to[in = 0,out = 170] (1,.1);\draw[->] (.66,.1005) -- (.65,.1);\draw[->] (1.35,-.12) -- (1.36,-.1205);}\Big)\,\,\,\text{-bimodule,} 
\]
where we again draw our intervals as in \eqref{eq: consider its doubling}.
We know from our previous discussion that $R$ is invertible as an
\[
r^\vee\Big(\tikzmath[scale = 0.35]{\useasboundingbox (-1.2,-.2) rectangle (3.2,3.3);
\draw[ultra thick](2.43,1.1) to[in = -135,out = 65] (3.1,2.02) to[in = -16,out = 145] (1.88,2.61);\draw (1.88,2.61) -- +(164:1);\draw[thick,->] (2.44,2.391) -- (2.442,2.39);} 
\Big)\,\,\,\text{-}\,\,\,\cala\Big(\tikzmath[scale = 0.35]{\useasboundingbox (-1.2,-.2) rectangle (3.2,3.3);\clip (-1,-.3) -- (-1,.8) -- (-1.8,2) -- (1,3.5)-- (1,1.8) -- (3,.8) -- (3,-.3) -- cycle;
\draw (1,-.1)to[out = 180,in = 10] (-.15,-.22)to[out = 100,in = -45](-1.27,2.03)to[out = 36,in = 198](0,2.67) arc (108:18:.1) arc (18:-72:.1)to[in = 36,out = 198] (-.96,2) to[in = 100,out = -45] (0.01,0.02) to[in = 180,out = 10] (1,.1);\draw (1,-.1)to[out = 0,in = 170] (2.15,-.22) to[out = 80,in = 225] (3.27,2.03) to[out = 144,in = -18](2,2.67) arc (72: 162:.1) arc (162: 252:.1)
to[in = 144,out = -18] (2.96,2) to[in = 80,out = 225] (1.99,0.02) to[in = 0,out = 170] (1,.1);\draw[->] (.86,.1) -- (.85,.1);}\Big)\,\,\,\text{-bimodule.} 
\]
Let $Q$ be the inverse bimodule.
Twisting it by a diffeomorphism
$\tikzmath[scale = 0.3]{\useasboundingbox (1.4,0) rectangle (3.6,3.1);\clip (3.8,2) -- (3,.8) -- (1,1.8) -- (1,3.5) -- cycle;
\draw (1,-.1)to[out = 0,in = 170] (2.15,-.22) to[out = 80,in = 225] (3.27,2.03) to[out = 144,in = -18](2,2.67) arc (72: 162:.1) arc (162: 252:.1)
to[in = 144,out = -18] (2.96,2) to[in = 80,out = 225] (1.99,0.02) to[in = 0,out = 170] (1,.1);\draw[<-] (2.502,2.49) -- (2.5,2.491);} 
\cong\tikzmath[scale = 0.3]{\useasboundingbox (-1.4,0) rectangle (3.2,2.3);\clip (-1,-.3) -- (-1,.8) -- (-1.8,2) -- (1,3.5)-- (1,1.8) -- (3,.8) -- (3,-.3) -- cycle;
\draw (1,-.1)to[out = 180,in = 10] (-.15,-.22)to[out = 100,in = -45](-1.27,2.03)to[out = 36,in = 198](0,2.67) arc (108:18:.1) arc (18:-72:.1)to[in = 36,out = 198] (-.96,2) to[in = 100,out = -45] (0.01,0.02) to[in = 180,out = 10] (1,.1);\draw (1,-.1)to[out = 0,in = 170] (2.15,-.22) to[out = 80,in = 225] (3.27,2.03) to[out = 144,in = -18](2,2.67) arc (72: 162:.1) arc (162: 252:.1)
to[in = 144,out = -18] (2.96,2) to[in = 80,out = 225] (1.99,0.02) to[in = 0,out = 170] (1,.1);\draw[->] (.86,.1) -- (.85,.1);} 
$,
we may treat $Q$ as an 
\[
\cala\Big(\tikzmath[scale = 0.3]{\useasboundingbox (.6,-.2) rectangle (3.6,3.3);\clip (3.8,2) -- (3,.8) -- (1,1.8) -- (1,3.5) -- cycle;
\draw (1,-.1)to[out = 0,in = 170] (2.15,-.22) to[out = 80,in = 225] (3.27,2.03) to[out = 144,in = -18](2,2.67) arc (72: 162:.1) arc (162: 252:.1)
to[in = 144,out = -18] (2.96,2) to[in = 80,out = 225] (1.99,0.02) to[in = 0,out = 170] (1,.1);\draw[<-] (2.502,2.49) -- (2.5,2.491);} 
\Big)\,\,\,\text{-}\,\,\,r^\vee\Big(\tikzmath[scale = 0.35]{\useasboundingbox (.3,-.2) rectangle (3.3,3.3);\draw[ultra thick](2.43,1.1) to[in = -135,out = 65] (3.1,2.02) to[in = -16,out = 145] (1.88,2.61);
\draw (1.88,2.61) -- +(164:1);\draw[thick,->] (2.44,2.391) -- (2.442,2.39);}\Big)\,\,\,\text{-bimodule.} 
\]
By definition, it then satisfies
\[
Q\,\,\,\boxtimes_{\!\!r^\vee\textstyle(\tikzmath[scale = 0.25]{\useasboundingbox (.3,.2) rectangle (3.3,3.3);
\draw[ultra thick](2.43,1.1) to[in = -135,out = 65] (3.1,2.02) to[in = -16,out = 145] (1.88,2.61);\draw (1.88,2.61) -- +(164:1);\draw[thick,->] (2.44,2.391) -- (2.442,2.39);} 
)} R\,\,\,\,\cong\,\,\,\, H_0\bigg(\tikzmath[scale = 0.3]{\useasboundingbox (-1.4,-.4) rectangle (3.2,2.8);
\draw (1,-.1)to[out = 180,in = 10] (-.15,-.22)to[out = 100,in = -45](-1.27,2.03)to[out = 36,in = 198](0,2.67) arc (108:18:.1) arc (18:-72:.1)
to[in = 36,out = 198] (-.96,2) to[in = 100,out = -45] (0.01,0.02) to[in = 180,out = 10] (1,.1);
\draw (1,-.1)to[out = 0,in = 170] (2.15,-.22) to[out = 80,in = 225] (3.27,2.03) to[out = 144,in = -18](2,2.67) arc (72: 162:.1) arc (162: 252:.1)
to[in = 144,out = -18] (2.96,2) to[in = 80,out = 225] (1.99,0.02) to[in = 0,out = 170] (1,.1);
\draw[->] (1.15,.1) -- (1.16,.1);}\,,\, 
\cala\bigg).
\]
We then also have (applying~\cite[\lemHKK]{BDH(modularity)})
\begin{equation}\label{eq: recreating the vacuum from R}
\bigg(L^2\big(\cala\big(\tikzmath[scale = 0.3]{\useasboundingbox (-1.5,3.3) rectangle (1.2,.2);
\clip (-1.8,2) -- (-1,.8) -- (1,1.8) -- (1,3.5) -- cycle;
\draw (1,-.1)to[out = 180,in = 10] (-.15,-.22)to[out = 100,in = -45](-1.27,2.03)to[out = 36,in = 198](0,2.67) arc (108:18:.1) arc (18:-72:.1)
to[in = 36,out = 198] (-.96,2) to[in = 100,out = -45] (0.01,0.02) to[in = 180,out = 10] (1,.1);\draw[->] (-.442,2.52) -- (-.44,2.521);} 
\big)\big)\otimes Q\bigg)
\,\boxtimes_{
\cala\textstyle(\tikzmath[scale = 0.25]{\useasboundingbox (-1.5,3.3) rectangle (1.2,.2);\clip (-1.8,2) -- (-1,.8) -- (1,1.8) -- (1,3.5) -- cycle;
\draw (1,-.1)to[out = 180,in = 10] (-.15,-.22)to[out = 100,in = -45](-1.27,2.03)to[out = 36,in = 198](0,2.67) arc (108:18:.1) arc (18:-72:.1)
to[in = 36,out = 198] (-.96,2) to[in = 100,out = -45] (0.01,0.02) to[in = 180,out = 10] (1,.1);\draw[->] (-.442,2.52) -- (-.44,2.521);} 
)\,\bar\otimes\,r^\vee(\tikzmath[scale = 0.25]{\useasboundingbox (.3,.2) rectangle (3.3,3.3);
\draw[ultra thick](2.43,1.1) to[in = -135,out = 65] (3.1,2.02) to[in = -16,out = 145] (1.88,2.61);\draw (1.88,2.61) -- +(164:1);\draw[thick,->] (2.44,2.391) -- (2.442,2.39);} 
)}
R\,\,\,\,\cong\,\,\,\, H_0\bigg(\tikzmath[scale = 0.3]{\useasboundingbox (-1.4,-.4) rectangle (3.2,2.8);
\draw (1,-.1)to[out = 180,in = 10] (-.15,-.22)to[out = 100,in = -45](-1.27,2.03)to[out = 36,in = 198](0,2.67) arc (108:18:.1) arc (18:-72:.1)
to[in = 36,out = 198] (-.96,2) to[in = 100,out = -45] (0.01,0.02) to[in = 180,out = 10] (1,.1);
\draw (1,-.1)to[out = 0,in = 170] (2.15,-.22) to[out = 80,in = 225] (3.27,2.03) to[out = 144,in = -18](2,2.67) arc (72: 162:.1) arc (162: 252:.1)
to[in = 144,out = -18] (2.96,2) to[in = 80,out = 225] (1.99,0.02) to[in = 0,out = 170] (1,.1);
\draw[->] (1.15,.1) -- (1.16,.1);}\,,\,\cala\bigg). 
\end{equation}
Since
$
L^2\big(\cala\big(\tikzmath[scale = 0.3]{\useasboundingbox (-1.5,3.3) rectangle (1.2,.2);\clip (-1.8,2) -- (-1,.8) -- (1,1.8) -- (1,3.5) -- cycle;
\draw (1,-.1)to[out = 180,in = 10] (-.15,-.22)to[out = 100,in = -45](-1.27,2.03)to[out = 36,in = 198](0,2.67) arc (108:18:.1) arc (18:-72:.1)
to[in = 36,out = 198] (-.96,2) to[in = 100,out = -45] (0.01,0.02) to[in = 180,out = 10] (1,.1);\draw[->] (-.442,2.52) -- (-.44,2.521);}\big)\big)\otimes Q 
$
is an invertible
\[
\cala\Big(\tikzmath[scale = 0.35]{\useasboundingbox (-1.5,3.3) rectangle (1.2,.2);\clip (-1.8,2) -- (-1,.8) -- (1,1.8) -- (1,3.5) -- cycle;
\draw (1,-.1)to[out = 180,in = 10] (-.15,-.22)to[out = 100,in = -45](-1.27,2.03)to[out = 36,in = 198](0,2.67) arc (108:18:.1) arc (18:-72:.1)
to[in = 36,out = 198] (-.96,2) to[in = 100,out = -45] (0.01,0.02) to[in = 180,out = 10] (1,.1);\draw[->] (-.442,2.52) -- (-.44,2.521);} 
\Big)\,\bar\otimes\,\cala\Big(\tikzmath[scale = 0.35]{\useasboundingbox (.8,.2) rectangle (3.5,3.3);\clip (3.8,2) -- (3,.8) -- (1,1.8) -- (1,3.5) -- cycle;
\draw (1,-.1)to[out = 0,in = 170] (2.15,-.22) to[out = 80,in = 225] (3.27,2.03) to[out = 144,in = -18](2,2.67) arc (72: 162:.1) arc (162: 252:.1)
to[in = 144,out = -18] (2.96,2) to[in = 80,out = 225] (1.99,0.02) to[in = 0,out = 170] (1,.1);\draw[<-] (2.502,2.49) -- (2.5,2.491);} 
\Big)\,\,\,\text{-}\,\,\,\cala\Big(\tikzmath[scale = 0.35]{\useasboundingbox (-1.5,3.3) rectangle (1.2,.2);\clip (-1.8,2) -- (-1,.8) -- (1,1.8) -- (1,3.5) -- cycle;
\draw (1,-.1)to[out = 180,in = 10] (-.15,-.22)to[out = 100,in = -45](-1.27,2.03)to[out = 36,in = 198](0,2.67) arc (108:18:.1) arc (18:-72:.1)to[in = 36,out = 198] (-.96,2) to[in = 100,out = -45] (0.01,0.02) to[in = 180,out = 10] (1,.1);\draw[->] (-.442,2.52) -- (-.44,2.521);} 
\Big)\,\bar\otimes\,r^\vee\Big(\tikzmath[scale = 0.35]{\useasboundingbox (.3,.2) rectangle (3.3,3.3);\draw[ultra thick](2.43,1.1) to[in = -135,out = 65] (3.1,2.02) to[in = -16,out = 145] (1.88,2.61);\draw (1.88,2.61) -- +(164:1);\draw[thick,->] (2.44,2.391) -- (2.442,2.39);}\Big)\,\,\,\text{-bimodule,} 
\]
it follows from \eqref{eq: recreating the vacuum from R} and the finiteness of $R$ that
$
H_0\Big(\tikzmath[scale = 0.25]{\useasboundingbox (-1.4,-.4) rectangle (3.2,2.8);
\draw (1,-.1)to[out = 180,in = 10] (-.15,-.22)to[out = 100,in = -45](-1.27,2.03)to[out = 36,in = 198](0,2.67) arc (108:18:.1) arc (18:-72:.1)
to[in = 36,out = 198] (-.96,2) to[in = 100,out = -45] (0.01,0.02) to[in = 180,out = 10] (1,.1);
\draw (1,-.1)to[out = 0,in = 170] (2.15,-.22) to[out = 80,in = 225] (3.27,2.03) to[out = 144,in = -18](2,2.67) arc (72: 162:.1) arc (162: 252:.1)
to[in = 144,out = -18] (2.96,2) to[in = 80,out = 225] (1.99,0.02) to[in = 0,out = 170] (1,.1);
\draw[->] (1.15,.1) -- (1.16,.1);}\,,\,\cala\Big) 
$
is finite as an
\[
\cala\Big(\tikzmath[scale = 0.35]{\useasboundingbox (-1.4,-.2) rectangle (2.6,3.3);\clip (-1.8,2) -- (-1,.8) -- (1,1.8) -- (1,3.5) -- cycle;
\draw (1,-.1)to[out = 180,in = 10] (-.15,-.22)to[out = 100,in = -45](-1.27,2.03)to[out = 36,in = 198](0,2.67) arc (108:18:.1) arc (18:-72:.1)
to[in = 36,out = 198] (-.96,2) to[in = 100,out = -45] (0.01,0.02) to[in = 180,out = 10] (1,.1);\draw[->] (-.442,2.52) -- (-.44,2.521);} 
\Big)\,\bar\otimes\,\cala\Big(\tikzmath[scale = 0.35]{\useasboundingbox (-.6,-.2) rectangle (3.4,3.3);\clip (3.8,2) -- (3,.8) -- (1,1.8) -- (1,3.5) -- cycle;
\draw (1,-.1)to[out = 0,in = 170] (2.15,-.22) to[out = 80,in = 225] (3.27,2.03) to[out = 144,in = -18](2,2.67) arc (72: 162:.1) arc (162: 252:.1)
to[in = 144,out = -18] (2.96,2) to[in = 80,out = 225] (1.99,0.02) to[in = 0,out = 170] (1,.1);\draw[<-] (2.502,2.49) -- (2.5,2.491);} 
\Big)\,\,\,\text{-}\,\,\,\cala\Big(\tikzmath[scale = 0.35]{\useasboundingbox (-1.2,-.2) rectangle (3.2,3.3);\clip (-1,-.3) -- (-1,.8) -- (1,1.8) -- (3,.8) -- (3,-.3) -- cycle;
\draw (1,-.1)to[out = 180,in = 10] (-.15,-.22)to[out = 100,in = -45](-1.27,2.03)to[out = 36,in = 198](0,2.67) arc (108:18:.1) arc (18:-72:.1)to[in = 36,out = 198] (-.96,2) to[in = 100,out = -45] (0.01,0.02) to[in = 180,out = 10] (1,.1);\draw (1,-.1)to[out = 0,in = 170] (2.15,-.22) to[out = 80,in = 225] (3.27,2.03) to[out = 144,in = -18](2,2.67) arc (72: 162:.1) arc (162: 252:.1)
to[in = 144,out = -18] (2.96,2) to[in = 80,out = 225] (1.99,0.02) to[in = 0,out = 170] (1,.1);\draw[->] (.66,.1005) -- (.65,.1);\draw[->] (1.35,-.12) -- (1.36,-.1205);}\Big)\,\,\,\text{-bimodule.}
\]
The latter is the definition of what it means for $\cala$ to be finite.
\end{proof}

\begin{scholium}\label{end scholium}
Recall that strong additivity was assumed as part of our definition of coordinate free conformal nets \cite[Def.\,1.1]{BDH(nets)}.

The above theorem implies that in a hypothetical $3$-category of strongly additive not-necessarily-finite-index conformal nets, a fully dualizable conformal net is necessarily finite-index.  We expect that even more is true, namely, that in a hypothetical $3$-category of not-necessarily-finite-index and not-necessarily-strongly-additive conformal nets, a fully dualizable conformal net is finite-index (and hence strongly additive, by~\cite{Longo-Xu(dichotomy)}).
\end{scholium}

\appendix

\section{Disintegrating sectors between finite defects}

Sectors between conformal nets disintegrate into irreducibles~\cite{\KLM};
in this section we generalize that result to the case of sectors between defects, provided the defects are finite.

\begin{lemma} \label{lem:disintegratingdefects}
Let $\cala$ and $\calb$ be conformal nets.
Let ${}_\cala D_\calb$ and ${}_\cala E_\calb$ be irreducible finite defects. Then any $D$-$E$-sector disintegrates into a direct integral of irreducible $D$-$E$-sectors.
\end{lemma}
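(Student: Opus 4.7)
The plan is to mimic the direct-integral argument used to disintegrate sectors of a single conformal net. A $D$-$E$-sector $H$ carries, by definition, compatible normal representations $\rho_I$ of $D(I)$, $E(I)$, $\cala(I)$, and $\calb(I)$ on $H$ (for $I$ ranging respectively over $\INT_{S^1,\top}$, $\INT_{S^1,\bot}$, $\INT_{S^1,\circ}$, and $\INT_{S^1,\bullet}$), subject to isotony and locality. After reducing to the case where $H$ is separable (by orthogonal decomposition into separable invariant subspaces), I would fix a countable family $\mathcal F$ of bicolored subintervals that is cofinal both from above and from below within each of the four coloring classes, and let $M \subset \bfB(H)$ denote the von Neumann algebra generated by all the $\rho_I(\cdot)$ for $I \in \mathcal F$.

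Next, choose a maximal abelian von Neumann subalgebra $\mathcal Z$ of the commutant $M'$. Standard disintegration theory then supplies a Borel probability measure $\mu$ on $\mathrm{Spec}(\mathcal Z)$ and a measurable field of Hilbert spaces $\{H_x\}$ with $H \cong \intp H_x\, d\mu(x)$ and $\mathcal Z$ acting as the diagonal $L^\infty$-algebra. Every operator in $M$ is decomposable, so each $\rho_I$ with $I \in \mathcal F$ disintegrates into fiber actions $\rho_{I,x}$ on $H_x$ for $\mu$-a.e.\ $x$. Since $\mathcal F$ is countable, there is a conull subset $X \subset \mathrm{Spec}(\mathcal Z)$ on which the isotony and locality relations hold simultaneously for all $I, J \in \mathcal F$; and because $\mathcal Z$ is maximal abelian in $M'$, for $x \in X$ the commutant in $\bfB(H_x)$ of the fiber actions is $\IC \cdot \id$.

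The final step is to promote each $\mathcal F$-indexed family of fiber actions to a full $D$-$E$-sector structure on $H_x$. For an arbitrary bicolored subinterval $I$, pick an increasing sequence $I_n \nearrow I$ in $\mathcal F$ and define $\rho_{I,x}$ on the relevant algebra (one of $D(I)$, $E(I)$, $\cala(I)$, $\calb(I)$) as the unique normal extension determined by the chain $\{\rho_{I_n,x}\}$; independence of the choice of $I_n$ and compatibility with isotony and locality then follow routinely. The main obstacle is to handle intervals $I$ that straddle a color-change point of $S^1$: at such intervals, well-definedness of the extension and compatibility with the sector axioms rests on the actions of $D$ and $E$ being split across the color-change point, which is precisely the content of the finiteness hypothesis (Definition~\ref{def: finiteness for defects}). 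With this step in hand, each fiber $H_x$ ($x \in X$) inherits the structure of an irreducible $D$-$E$-sector, and $H \cong \intp H_x\, d\mu(x)$ is the desired direct-integral decomposition.
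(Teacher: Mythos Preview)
Your approach has a genuine gap at the normality step. When you disintegrate $H$ over the maximal abelian $\mathcal Z\subset M'$, you do obtain fiber maps $\rho_{I,x}:A_I\to \bfB(H_x)$ for each $I\in\mathcal F$, but these are a priori only $*$-homomorphisms of the underlying $C^*$-algebras; nothing in the standard direct-integral machinery guarantees that they are \emph{normal}. Without normality the phrase ``the unique normal extension determined by the chain $\{\rho_{I_n,x}\}$'' has no content, and the fiberwise data you produce are not $D$-$E$-sectors. (There is also a secondary issue: even granting normality, an increasing chain $I_n\nearrow I$ only gives you a representation of $\bigvee_n A_{I_n}$, which need not coincide with $A_I$.) This normality obstacle is precisely what forces the paper, following Kawahigashi--Longo--M\"uger, to take a different route.

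The paper's strategy is to choose, for each pair $I_i^-\Subset I_i^+$ in a suitable countable family, an intermediate type~I factor $A_i^-\subset N_i\subset A_i^+$, and then to assemble a \emph{separable $C^*$-algebra} $\mathfrak A$ from the compact ideals $\cK_i\subset N_i$, modulo the relations encoding locality and isotony. Because a nondegenerate representation of $\cK_i$ extends uniquely to a \emph{normal} representation of $N_i$, the category of $D$-$E$-sectors is identified with the nondegenerate representations of $\mathfrak A$ (Lemma~\ref{lem-repcat}); disintegration of representations of a separable $C^*$-algebra then goes through unproblematically, and one only needs to verify that nondegeneracy on each $\cK_i$ survives in almost every fiber (Lemma~\ref{lem-a2}). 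The finiteness hypothesis on $D$ and $E$ enters exactly here---not at an ``extension across the color-change point'' as you suggest, but in supplying the intermediate type~I factor $N_i$ when $I_i^\pm$ are genuinely bicolored. (For monochromatic $I_i^\pm$ the split property of $\cala$ and $\calb$, which is part of the axioms, does the same job.)
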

\begin{proof}

Pick a countable collection\footnote{In fact this collection can be chosen to be finite.} of pairs of bicolored subintervals $\{I_i^- \subset I_i^+\}_{i \in \mathcal{I}}$ of the standard bicolored circle, with the closure of $I_i^-$ contained in the interior of $I_i^+$, satisfying the following conditions: 
\begin{itemize}
\item[-] $I_i^-$ is genuinely bicolored if and only if $I_i^+$ is genuinely bicolored; 
\item[-] for all $p,q \in S^1$, either
\begin{itemize}
\item[a.] there exists an $i \in \mathcal{I}$ such that $p,q \in I_i^-$, or
\item[b.] there exist $i,j \in \mathcal{I}$ such that $p \in I_i^-$, $q \in I_j^-$, and $I_i^+ \cap I_j^+ = \emptyset$.
\end{itemize}
\end{itemize}
For each $i \in \mathcal{I}$, let $A_i^{\pm}$ denote the algebra $\cA(I_i^\pm)$, $\cB(I_i^\pm)$, $D(I_i^\pm)$, or $E(I_i^\pm)$ depending on whether $I_i^\pm$ is white, black, contains the top defect point, or contains the bottom defect point, respectively.  Because $D$ and $E$ are finite, there exists, for each $i \in \mathcal{I}$, a type $I$ factor $N_i$ such that 
\[
A_i^- \subset N_i \subset A_i^+.
\]
Let $\cK_i \subset N_i$ denote the ideal of compact operators in $N_i$.  For each $i,j \in \mathcal{I}$ such that $I_i^+ \cap I_j^+ = \emptyset$, let $R_{ij} \subset \cK_i \ast \cK_j$ be the kernel of the projection $\cK_i \ast \cK_j \ra \cK_i \otimes \cK_j$ from the free product $C^\ast$-algebra to the tensor product $C^\ast$-algebra.  For each $i,j \in \mathcal{I}$ such that $I_i^+ \subset I_j^-$, let $S_{ij} \subset \cK_i \ast \cK_j$ be the kernel of the map $\cK_i \ast \cK_j \ra \cK_i \vee_{N_j} \cK_j$, where $\cK_i \vee_{N_j} \cK_j$ is the subalgebra of $N_j$ generated by $\cK_i$ and $\cK_j$.  Now define
\[
\mathfrak{A} := (\underset{i}{\ast} \cK_i) / I
\]
where $I$ is the norm-closed ideal generated by $R_{ij}$ for $i,j \in \mathcal{I}$ such that $I_i^+ \cap I_j^+ = \emptyset$, and $S_{ij}$ for $i,j \in \mathcal{I}$ such that $I_i^+ \subset I_j^-$.

By Lemma~\ref{lem-repcat}, the category of $D$--$E$-sectors is equivalent to the category of representations of $\mathfrak{A}$ whose restriction to each $\cK_i$ is nondegenerate. 

Because $\mathfrak{A}$ is a separable $C^\ast$-algebra, the category $\mathrm{Rep}(\mathfrak{A})$ admits direct integral decompositions.  We need to show that given a representation $H$ of $\mathfrak{A}$ whose restriction to each $\cK_i$ is nondegenerate, and a direct integral decomposition $(H,\rho) \cong \int_{x \in X} (H^x,\rho^x) dx$, almost all of the integrands $(H^x,\rho^x)$ again have the property that their restriction to each $\cK_i$ is nondegenerate.  Pick an increasing sequence of projections $p_n^i \in \cK_i$, $n \in \mathbb{N}$, that forms an approximate unit.  By Lemma~\ref{lem-a2}, we have that $1 = \sup \rho_i(p_n^i) = \sup \int^\oplus \rho^x_i(p_n^i) =  \int^\oplus \sup p^x_i(p_n^i)$.
This implies that for almost all $x$, we have $\sup \rho^x_i(p_n^i) = 1$.
\end{proof}

\begin{lemma} \label{lem-repcat}
The category of representation of $\mathfrak{A}$ whose restriction to each $\cK_i$ is nondegenerate is equivalent to the category of $D$--$E$-sectors.
\end{lemma}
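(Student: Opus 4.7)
The plan is to construct mutually inverse functors between the two categories, exploiting the fact that each $\cK_i$ is the ideal of compact operators in the type $I$ factor $N_i \subset A_i^+$, so that nondegenerate $\cK_i$-representations correspond canonically with normal $N_i$-representations. In the direction from sectors to $\mathfrak{A}$-representations, I would start with a $D$-$E$-sector $H$: each $A_i^+$ acts normally on $H$, hence $N_i$ does, and restricting to $\cK_i$ gives an automatically nondegenerate representation (an approximate unit in $\cK_i$ converges strongly to $1$ in any normal $N_i$-representation). The universal property of the free product $\underset{i}{\ast}\cK_i$ yields a $\ast$-homomorphism to $\bfB(H)$. I would then check that the ideal $I$ is in the kernel: the $R_{ij}$ relations are killed because sector locality makes $A_i^+$ and $A_j^+$ commute when $I_i^+ \cap I_j^+ = \emptyset$, so the joint representation factors through $\cK_i \otimes \cK_j$; the $S_{ij}$ relations are killed because, when $I_i^+ \subset I_j^-$, sector isotony gives $N_i \subset A_i^+ \subset A_j^- \subset N_j$, so the joint representation factors through $\cK_i \vee_{N_j} \cK_j$.

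In the reverse direction, given a representation $\rho: \mathfrak{A} \to \bfB(H)$ with each $\rho|_{\cK_i}$ nondegenerate, I would extend each $\rho|_{\cK_i}$ uniquely to a normal representation $\tilde\rho_i: N_i \to \bfB(H)$, then restrict to obtain a normal action $\rho_{I_i^-}:=\tilde\rho_i|_{A_i^-}$ for every $i$. For a general bicolored subinterval $J$ of $S^1$, I would define the sector action of the appropriate algebra by taking the von Neumann algebra generated by all $\rho_{I_k^-}$ with $I_k^- \subset J$, together with an appeal to strong additivity (from the defect/net definition) to identify this with the full algebra on $J$. Condition (a) on the collection $\{I_i^-\subset I_i^+\}$ guarantees that these generators are sufficient to recover every local algebra; condition (b) supplies enough commutation for locality.

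I would then verify the sector axioms: isotony will follow from the $S_{ij}$ relations and strong additivity, and locality will follow from the $R_{ij}$ relations together with the fact that disjoint intervals can be separated by elements of the cover. Finally, showing the two constructions are mutually inverse reduces, on one side, to the uniqueness of the normal extension of a nondegenerate representation of $\cK_i$ to $N_i$, and on the other side, to the fact that the restrictions of the sector actions to the compact ideals $\cK_i \subset N_i \subset A_i^+$ generate the $\mathfrak{A}$-representation.

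The main obstacle I anticipate is not the algebra of the construction, but the verification that the sector action defined on a general bicolored interval $J$ is independent of the chosen cover and satisfies isotony against all nested pairs of intervals, not merely pairs of the form $I_i^- \subset I_j^-$. This will require carefully combining strong additivity with the separation condition (b) on the cover, applied independently in each of the four chromatic regions (white arc, black arc, top defect, bottom defect) of the standard bicolored circle. Once this is established, functoriality in intertwiners and naturality of the equivalence are formal.
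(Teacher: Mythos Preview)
Your proposal is correct and follows essentially the same approach as the paper. The paper's proof is terser: the forward direction is dispatched as ``by construction,'' and in the reverse direction the paper extends each $\cK_i$-action to a normal $N_i$-action exactly as you do, verifies the commutation and compatibility conditions from the $R_{ij}$ and $S_{ij}$ relations, and then---rather than defining the action on a general interval by hand via strong additivity---invokes two external results: \cite[Cor.~53]{\KLM} to guarantee that the action of $\cK_i\vee_{N_j}\cK_j$ extends uniquely and normally to $N_j$ (so that $\tilde\rho_j|_{N_i}=\tilde\rho_i$ when $I_i^+\subset I_j^-$), and \cite[Lem.~2.5]{BDH(1*1)} to assemble the compatible family $\{\rho_i|_{A_i^-}\}$ directly into a $D$--$E$-sector. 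These two citations absorb precisely the obstacle you flagged (well-definedness and isotony for general intervals), so your anticipated difficulty is real but already packaged in the literature.
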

\begin{proof}
By construction, every $D$--$E$-sector yields an appropriate representation of $\mathfrak{A}$.
Now suppose that we have a representation of $\mathfrak{A}$ on a Hilbert space $H$ whose restriction to each $\cK_i$ is nondegenerate.  By the classification of the representations of compact operators, the action of $\cK_i$ extends uniquely to a normal action $\rho_i : N_i \ra B(H)$.  For every $i,j \in \mathcal{I}$ such that $I_i^+ \cap I_j^+ = \emptyset$, the action of $\cK_i \ast \cK_j$ descends to an action of $\cK_i \otimes \cK_j$; by the ultraweak density of $\cK_i$ in $N_i$, the actions of $N_i$ and $N_j$ commute.  Now, for every $i,j \in \mathcal{I}$ such that $I_i^+ \subset I_j^-$, the action of $\cK_i \ast \cK_j$ descends to an action of $\cK_i \vee_{N_j} \cK_j$.  By~\cite[Cor 53]{\KLM}, that action of $\cK_i \vee_{N_j} \cK_j$ extends uniquely to a normal action $\tilde{\rho_j}:N_j \ra B(H)$, which agrees with $\rho_j$ by the ultraweak density of $\cK_j$ inside $N_j$.  We therefore have a diagram
\[
\xymatrix@C=.4cm{
&& \cK_i \ar[rr] \ar[d] \ar[ddrrr] && N_i \ar[d] \ar[ddr] \\
\cK_j \ar[rr] \ar[drrrrr] && \cK_i \vee_{N_j} \cK_j \ar[rr] \ar[drrr] && N_j\! \ar[dr] \\
&&&&& \!B(H)
}
\]
where all triangles are known to commute except possibly the triangle with edge $N_i \ra N_j$.  The missing triangle commutes because $\cK_i$ is ultraweakly dense in $N_i$.  Therefore, by~\cite[Lem 2.5]{BDH(1*1)}, the actions $\rho_i |_{A_i^-}$ assemble into a $D$--$E$-sector structure on $H$. 
\end{proof}

\begin{lemma} \label{lem-a2}
Let $H_x$ be a measurable family of Hilbert spaces over a probability space $X$.
For each $n \in \IN$, let $p_{n,x} \in B(H_x)$ be a measurable family of projections indexed by the points of $X$.  Assume furthermore that for every $x \in X$, the sequence $\{p_{n,x}\}_{n \in \IN}$ is increasing.  Then
\begin{equation*}
\int^\oplus \sup p_{n,x} = \sup \int^\oplus p_{n,x}\,.
\end{equation*}
\end{lemma}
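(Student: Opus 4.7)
The plan is to identify the supremum on the right-hand side with the strong operator limit of the increasing sequence $p_n := \int^\oplus p_{n,x}\,dx$, and then to show by dominated convergence that this SOT limit equals $p := \int^\oplus p_x\,dx$, where $p_x := \sup_n p_{n,x}$.

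First I would note that the family $(p_x)_{x \in X}$ is measurable: for each $x$, $p_x$ is the SOT limit of the increasing sequence of projections $p_{n,x}$, and the SOT limit of a measurable family of operators is measurable. Hence $p$ is a well-defined decomposable projection in $B(\int^\oplus H_x\,dx)$. Since $p_{n,x} \le p_x$ for every $x$, we have $p_n \le p$ for all $n$, so $\sup_n p_n \le p$ (and this supremum is itself a decomposable projection, as the SOT limit of the increasing sequence $p_n$).

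The key calculation is as follows. Fix $\xi = (\xi_x)_{x \in X} \in \int^\oplus H_x\,dx$. Then
\[
\|(p - p_n)\xi\|^2 \;=\; \int_X \|(p_x - p_{n,x})\xi_x\|^2\,dx.
\]
For each fixed $x$, the sequence $p_{n,x}$ is an increasing sequence of projections with SOT limit $p_x$, so $\|(p_x - p_{n,x})\xi_x\|^2 \to 0$ as $n \to \infty$. Moreover $\|(p_x - p_{n,x})\xi_x\|^2 \le \|\xi_x\|^2$, and $x \mapsto \|\xi_x\|^2$ is integrable because $\xi$ is square-integrable. By the dominated convergence theorem, $\|(p - p_n)\xi\|^2 \to 0$, so $p_n \to p$ in the strong operator topology.

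Since $(p_n)$ is an increasing sequence of projections, it also converges in SOT to $\sup_n p_n$, and an SOT limit is unique, so $\sup_n p_n = p$, which is the claimed equality. The only subtlety worth worrying about is the measurability of $(p_x)$, but this follows automatically from the pointwise increasing property; no further technical obstacle arises.
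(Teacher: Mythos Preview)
Your proof is correct. The measurability claim, the fiberwise norm formula, and the dominated convergence step are all standard facts about direct integrals, and together they yield the SOT convergence $p_n\to p$; since an increasing sequence of projections also converges SOT to its supremum, you conclude $\sup_n p_n = p$.

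Your route differs genuinely from the paper's. The paper argues structurally: it embeds everything in the abelian von Neumann algebra $M$ generated by all $\int^\oplus f(x)\,p_{n,x}$ for $f\in L^\infty(X)$, identifies $M\cong L^\infty(Y)$ for some measure space $Y$ fibered over $X$, and then reduces the operator identity to the set-theoretic equality $\bigsqcup_x \bigcup_n Z_{n,x} = \bigcup_n \bigsqcup_x Z_{n,x}$ for the subsets $Z_{n,x}\subset \pi^{-1}(x)$ corresponding to the projections $p_{n,x}$. Your argument, by contrast, stays at the level of vectors and invokes dominated convergence directly. What the paper's approach buys is a conceptual reduction to classical measure theory (interchanging a countable union with a disjoint union over fibers); what yours buys is brevity and the avoidance of any auxiliary disintegration of $M$. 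For this particular lemma your direct argument is arguably cleaner, though the paper's structural viewpoint may generalize more readily to situations where one wants to manipulate several such families simultaneously inside a fixed abelian subalgebra.
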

\begin{proof}
Let $M\subset B(H)$ be the abelian von Neumann algebra on $H:=\int^\oplus H_x$ generated by $\int^\oplus f(x) p_{n,x}$ for all $f \in L^\infty(X)$ and $n \in \IN$.  Note that $M \cong L^\infty(Y)$ for some measure space $Y$.  Since $L^\infty(X) \subset M$, we have a measurable map $\pi: Y \rightarrow X$ and we can write $M = \int^\oplus_X M_x$, where $M_x = L^\infty(\pi^{-1}(x))$.  The projections $p_{n,x} \in M_x$ correspond to measurable subsets $Z_{n,x} \in \pi^{-1}(x)$, and the equation $\int^\oplus \sup p_{n,x} = \sup \int^\oplus p_{n,x}$ follows from the fact that $\bigsqcup_x \bigcup_n Z_{n,x} = \bigcup_n \bigsqcup_x Z_{n,x}$.
\end{proof}

\section{A variant vertical composition}\label{app-fusion}

In \cite[\S2.C]{BDH(1*1)}, we defined the vertical composition of two sectors ${}_DH_E$ and ${}_EK_F$ to be the fusion along half of each `circle', $H\boxtimes_{E(S^1_\top)}K$, with the evident remaining actions of $D$ and $F$:
\begin{equation}\label{eq: vfusion}
{}_DH\boxtimes_EK_F
\,=\,
\mathsf{fusion_v}\,\,
\Big(\tikzmath[scale=\squarescale]{\fill[spacecolor] (0,0) -- (0,12) -- (12,12) -- (12,0);\draw (6,0) -- (0,0) -- (0,12) -- (6,12);\draw[ultra thick](6,12) -- (12,12) -- (12,0) -- (6,0); 
\draw (-3,6) node {$\cala$}(15,6) node {$\calb$}(6,15) node {$D$}(6,-3) node {$E$}(6,6)node {$H$};}
,\tikzmath[scale=\squarescale]{\fill[spacecolor] (0,0) -- (0,12) -- (12,12) -- (12,0);\draw (6,0) -- (0,0) -- (0,12) -- (6,12);\draw[ultra thick](6,12) -- (12,12) -- (12,0) -- (6,0); 
\draw (-3,6) node {$\cala$}(15,6) node {$\calb$}(6,15) node {$E$}(6,-3) node {$F$}(6,6)node {$K$};}
\Big) 
=\tikzmath[scale=\squarescale]
{\fill[spacecolor] (0,3) -- (0,15) -- (12,15) -- (12,3);\draw (6,3) -- (0,3) -- (0,15) -- (6,15);\draw[ultra thick](6,15) -- (12,15) -- (12,3) -- (6,3); 
\fill[spacecolor] (0,-15) -- (0,-3) -- (12,-3) -- (12,-15);\draw (6,-15) -- (0,-15) -- (0,-3) -- (6,-3);\draw[ultra thick](6,-3) -- (12,-3) -- (12,-15) -- (6,-15); 
\draw 
(6,18) node {$D$}(6,9)node {$H$}(6,-18) node {$F$}(6,-9)node {$K$};
\draw[<->] (-.2,9) to[out=180,in=180] (-.2,-9); 
\draw[<->] (-.2,6) to[out=200,in=160] (-.2,-6); 
\draw[<->] (-.2,2.9) to[out=230,in=130] (-.2,-2.9); 
\draw[<->] (2.7,2.7) to[out=255,in=105] (2.7,-2.7); 
\draw[<->] (12.5,9) to[out=0,in=0] (12.5,-9); 
\draw[<->] (12.5,6) to[out=-20,in=20] (12.5,-6); 
\draw[<->] (12.5,2.6) to[out=-50,in=50] (12.5,-2.6); 
\draw[<->] (9.3,2.6) to[out=-75,in=75] (9.3,-2.6); 
\draw[<->] (6,2.6) -- (6,-2.6); 
  }\,.
\end{equation}
An alternative definition would be to fuse along a `quarter-circle':
\begin{equation}\label{eq: altvfusion}
\tikzmath[scale=\squarescale]{ 
\pgftransformxshift{15}\draw[<->] (20.1-.7,2.8) -- (20.1-.7,-2.8); 
\pgftransformxshift{15}\draw[<->] (23.2-.7,2.8) -- (23.2-.7,-2.8); 
\pgftransformxshift{510} 
\fill[spacecolor] (0,3) -- (0,15) -- (12,15) -- (12,3);
\draw (6,3) -- (0,3) -- (0,15) -- (6,15);
\draw[ultra thick](6,15) -- (12,15) -- (12,3) -- (6,3); 
\fill[spacecolor] (0,-15) -- (0,-3) -- (12,-3) -- (12,-15);
\draw (6,-15) -- (0,-15) -- (0,-3) -- (6,-3);
\draw[ultra thick](6,-3) -- (12,-3) -- (12,-15) -- (6,-15); 
\draw (6,9)node {$H$} (6,-9)node {$K$}; 
\draw[<->] (12.2-.7,2.6) -- (12.2-.7,-2.6); 
\draw[<->] (9.1-.7,2.6) -- (9.1-.7,-2.6);}
\end{equation}
and to equip the resulting Hilbert space with the structure of a $D$-$F$-sector by means of a diffeomorphism
\begin{equation*}
\varphi\,:\,\,
\tikzmath[baseline=0, scale=\squarescale]{ 
\draw (6,-6) -- (0,-6) -- (0,6) -- (6,6);
\draw[ultra thick](6,-6) -- (12,-6) -- (12,6) -- (6,6); 
}
\,\,\stackrel{\scriptstyle\cong}\longrightarrow\,\,
\tikzmath[baseline=0, scale=\squarescale]{ 
\draw (6,-12) -- (0,-12) -- (0,12) -- (6,12);
\draw[ultra thick](6,-12) -- (12,-12) -- (12,12) -- (6,12); 
\draw (-.5,0) -- (.5,0);
\draw (12-.7,0) -- (12.7,0);
}\,,
\end{equation*}
compatible with the local coordinates around the color-change points.  Specifically, the resulting sector is $\varphi^*(H\boxtimes_{E(I)}K)$, where $I$ is the top quarter of the circle (associated to the sector $K$), or equivalently the bottom quarter of the circle (associated to the sector $H$).

\begin{lemma}
Let ${}_DH_E$ and ${}_EK_F$ be sectors, and let $\varphi$ be a diffeomorphism from the standard circle to the larger circle, as above.  Then the vertical fusion $H\boxtimes_{E(S^1_\top)}K$ from \eqref{eq: vfusion} is (non-canonically) isomorphic, as a $D$-$F$-sector, to the alternative fusion $\varphi^*(H\boxtimes_{E(I)}K)$ from \eqref{eq: altvfusion}.\end{lemma}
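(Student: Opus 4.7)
The plan is to construct the desired non-canonical unitary by the standard technique of inserting and absorbing a vacuum sector of $E$, following the template of Lemma~\lemHKK{} of BDH(modularity), which asserts that fusing any sector with an appropriately positioned vacuum sector reproduces the original sector.

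First, I would choose a color-preserving diffeomorphism $\psi$ of $S^1_\top$ onto the quarter-interval $I$ (compatibly with the local coordinates at the color-change points), and extend it to a diffeomorphism of the full standard bicolored circle onto the "stretched" bicolored circle, realizing precisely the diffeomorphism $\varphi$ in the statement of the lemma. Using $\psi$, the $E(S^1_\top)$-action on $K$ used in the half-circle fusion can be reinterpreted as the $E(I)$-action used in the quarter-circle fusion, and analogously for the right action on $H$ via the reflection identification $E(S^1_\bot)\cong E(S^1_\top)^{\op}$.

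Second, I would realize the isomorphism by the following vacuum-insertion composition. Starting from $H\boxtimes_{E(S^1_\top)}K$, insert a vacuum sector $H_0(E)$ between the two factors; by the vacuum fusion identity of Lemma~\lemHKK{}, fusing $K$ with $H_0(E)$ over $E(S^1_\top)$ returns $K$, so this insertion is trivial. Next, reassociate so that $H_0(E)$ is fused with $H$ over the full algebra $E(S^1_\top)$ on one side, and with $K$ over the smaller algebra $E(I)$ on the other side; this rearrangement is a matter of relabeling the decomposition of $H_0(E)$'s boundary. Finally, reabsorb the $H_0(E)$ into $H$ via a second application of the vacuum fusion identity, producing a Hilbert space canonically identified with $H\boxtimes_{E(I)}K$, now viewed as a sector on the stretched bicolored circle. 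Pulling back along $\varphi$ gives the claimed sector isomorphism.

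The equivariance with respect to the $D$- and $F$-actions is automatic from the naturality of each step, since the vacuum absorption and associativity isomorphisms are themselves intertwiners. The main obstacle is the careful combinatorial bookkeeping of the diffeomorphisms and local coordinates involved — in particular, ensuring that the extension of $\psi$ to a full circle diffeomorphism agrees with $\varphi$, and that the two applications of the vacuum fusion identity are performed along compatible sub-intervals of the $E$-boundary. Once this bookkeeping is in place, the analytic content is entirely provided by Lemma~\lemHKK.
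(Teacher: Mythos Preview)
Your vacuum-insertion strategy is a reasonable instinct, but the paper takes a more direct route, and your proposal as written has a gap at the reassociation step.

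The paper does not use \lemHKK{} at all. Instead it invokes diffeomorphism covariance of sectors (\cite[Prop.\,1.10]{BDH(1*1)}): choose a diffeomorphism $\psi_1$ of the circle that squeezes $H$'s lower half onto the lower quarter while agreeing with $\varphi$ on the upper half, and a matching $\psi_2=j\circ\psi_1\circ j$ for $K$. The implementing unitaries $u_{\psi_1}$, $u_{\psi_2}$ then tensor to give the isomorphism $u_{\psi_1}\boxtimes u_{\psi_2}:H\boxtimes_{E(S^1_\top)}K\to H\boxtimes_{E(I)}K$, and the compatibility conditions with $\varphi$ make it a $D$-$F$-sector map. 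No vacuum sectors, no reassociation.

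Your argument stumbles at ``reassociate so that $H_0(E)$ is fused with $H$ over $E(S^1_\top)$ on one side, and with $K$ over the smaller algebra $E(I)$ on the other side; this rearrangement is a matter of relabeling.'' It is not. If $H_0(E)$ sits on the standard circle and you have inserted it via $H\boxtimes_{E(S^1_\top)}H_0(E)\boxtimes_{E(S^1_\top)}K$, then both fusions are over half-circles; replacing the second one by a fusion over the quarter $I$ produces a sector on a \emph{different} (larger) boundary circle, so no amount of reassociation gets you there. Likewise, your first paragraph's ``reinterpretation'' of the $E(S^1_\top)$-action on $K$ as an $E(I)$-action via $\psi_*$ does not give the geometric $E(I)$-action on $K$ coming from the actual top quarter of $K$'s circle; it gives a twisted action on the wrong interval.

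The vacuum-insertion idea can be salvaged, but you must place $H_0(E)$ on a \emph{non-standard} bicolored circle $S_0$ whose upper arc is a half (to match $S^1_\bot$ of $H$) and whose lower arc is a quarter (to match $I\subset S^1_\top$ of $K$), with two side arcs making up the rest. Then two applications of \lemHKK{} (one absorbing into $H$, one into $K$) combined with associativity do yield the claim, after checking that the resulting boundary diffeomorphisms compose to $\varphi$. That bookkeeping is exactly the work the paper's approach avoids by appealing directly to implementing unitaries.
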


\begin{proof}
Let $\psi_1:S^1\to S^1$ be a diffeomorphism which maps the lower semi-circle $S^1_\bot$ to the lower quarter-circle (drawn here as an edge of a square) and satisfies $\varphi|_{S^1_\top}=\psi_1|_{S^1_\top}$,
let $\psi_2:S^1\to S^1$ be a diffeomorphism which maps the upper semi-circle $S^1_\bot$ to the upper quarter-circle and satisfies $\varphi|_{S^1_\bot}=\psi_2|_{S^1_\bot}$,
and let $u_{\psi_1}$ and $u_{\psi_2}$ be unitaries implementing these diffeomorphisms (these exist by~\cite[Prop.\,1.10]{BDH(1*1)}).
We assume without loss of generality that $\psi_2=j\circ\psi_1\circ j$, where $j$ is the reflection along the horizontal axis of symmetry.
Then $u_{\psi_1}\boxtimes u_{\psi_2}$ maps $H\boxtimes_{E(S^1_\top)}K$ to $H\boxtimes_{E(I)}K$, and is an isomorphism of $D$-$F$-sectors
$H\boxtimes_{E(S^1_\top)}K\cong \varphi^*(H\boxtimes_{E(I)}K)$.
\end{proof}

\bibliographystyle{amsalpha}
\bibliography{../Files/db-cn3}

\end{document}